\documentclass[9pt,letterpaper]{article}

\usepackage{amsmath,amssymb,fullpage}
\usepackage{mathtools}
\usepackage{etoolbox}
\usepackage{enumerate}
\usepackage{dsfont}
\usepackage{mathrsfs}
\usepackage{tabularray}
\usepackage{linegoal}
\usepackage{ifthen}
\usepackage{multicol}
\usepackage{svg}
\usepackage{caption}
\usepackage{subcaption}

\usepackage{tcolorbox}
\tcbuselibrary{breakable}

\usepackage{pdfpages}


\everymath{\displaystyle}
\numberwithin{equation}{section}

\UseTblrLibrary{diagbox}
\UseTblrLibrary{booktabs}



\usepackage{textcomp, marvosym}

\usepackage[noadjust, nosort]{cite}
\usepackage{graphicx}

\usepackage{nameref, hyperref}

\usepackage[right]{lineno}

\usepackage{microtype}
\DisableLigatures[f]{encoding = *, family = * }


\usepackage{array}

\usepackage{here}
\usepackage{soul}

\date{}

\setlength\parindent{0pt}


\usepackage{zref-xr}
\zxrsetup{tozreflabel=false, toltxlabel=true, verbose}
\zexternaldocument*{./Rein_Cell_Systems_SI_Revised_v1}

\providecommand{\blue}[1]{\color{black}{#1}\color{black}\hspace{0pt}}
\providecommand{\black}[1]{\color{black}{#1}\color{black}\hspace{0pt}}

\newtheorem{theorem}{Theorem}[section]
\newtheorem{corollary}[theorem]{Corollary}
\newtheorem{proposition}[theorem]{Proposition}
\newtheorem{lemma}[theorem]{Lemma}
\newtheorem{define}[theorem]{Definition}
\newtheorem{example}[theorem]{Example}

\newtheorem{assumption}[theorem]{Assumption}

\def\Xz{\boldsymbol{X}}

\def\Zz{\boldsymbol{Z}}
\def\Yz{\boldsymbol{Y}}
\providecommand\phib{\boldsymbol{\emptyset}}
\providecommand\X[1]{\boldsymbol{X_{#1}}}
\providecommand\Z[1]{\boldsymbol{Z_{#1}}}

\def\adj{\textnormal{Adj}}

\def\d{\mathrm{d}}

\def\ds{\mathrm{d}s}
\def\d{\mathrm{d}}

\def\llongrightarrow{\relbar\joinrel\relbar\joinrel\relbar\joinrel\rightarrow}

\providecommand{\rarrow}[1]{\stackrel{#1}{\llongrightarrow}}
\providecommand{\rarrowl}[1]{\underset{#1}{\llongrightarrow}}

\DeclareMathOperator*{\col}{col}

\newcommand\needref[1]{\ifthenelse{\equal{#1}{}}
{\textcolor{red}{\textbf{[REF NEEDED]}}}
{\textcolor{red}{\textbf{[REF NEEDED: #1]}}}}

\DeclareMathOperator{\eps}{\varepsilon}
\DeclareMathOperator*{\trace}{trace}
\DeclareMathOperator{\rank}{rank}
\DeclareMathOperator*{\diag}{diag}
\DeclareMathOperator*{\sign}{sign}
\def\mn{\mathrm{min}}

\newenvironment{proof}{{\it Proof :~}}{\hfill$\diamondsuit$\\}

\renewenvironment{bmatrix}{\left[\begin{array}{ccccccccccccccccccc}}{\end{array}\right]}

\newcounter{boxcounter}
\setcounter{boxcounter}{0}

\usepackage{tikz}
\usetikzlibrary{arrows.meta}

\tikzset{
    regulations/.cd,
    act/.style={-{Stealth}}, 
    rep/.style={-{Bar}}, 
}

\newcommand{\regulationarrow}[1][act]{%
    \tikz[baseline] {\draw[regulations/#1] (0,0.5ex) --++ (1.5em,0);}%
}

\newcommand{\nocontentsline}[3]{}
\let\origcontentsline\addcontentsline
\newcommand\stoptoc{\let\addcontentsline\nocontentsline}
\newcommand\resumetoc{\let\addcontentsline\origcontentsline}


\begin{document}
\vspace*{0.2in}

\begin{flushleft}
{\Large Structural Stability Properties of Antithetic Integral (Rein) Control with Output Inhibition
\textbf\newline{} 
}
\newline
\\
Corentin Briat\footnote{Corentin Briat is now with the School of Life Sciences, University of Applied Sciences Northwestern Switzerland, Switzerland. email: corentin@briat.info, url: www.briat.info.} and Mustafa Khammash\footnote{email: mustafa.khammash@bsse.ethz.ch, url: https://bsse.ethz.ch/ctsb}
\\
\bigskip
\textbf{Department of Biosystems Science and Engineering, ETH-Z\"urich, Switzerland}\\
%
\bigskip

%
%






\end{flushleft}
\section*{Abstract}

Perfect adaptation is a well-studied biochemical homeostatic behavior lying at the core of biochemical regulation. While the concepts of homeostasis and perfect adaptation are not new, their underlying mechanisms and associated biochemical regulation motifs are not yet fully understood. Insights from control theory unraveled the connections between perfect adaptation and integral control, a prevalent engineering control strategy. In particular, the recently introduced Antithetic Integral Controller (AIC) has been shown to successfully ensure perfect adaptation properties to the network it is connected to. The complementary structure of the two molecules the AIC relies upon allows for a versatile way to control biochemical networks, a property which gave rise to an important body of literature pertaining to mathematically elucidating its properties, generalizing its structure, and developing experimental methods for its implementation. The Antithetic Integral Rein Controller (AIRC), an extension of the AIC in which both controller molecules are used for control, holds many promises as it supposedly overcomes certain limitations of the AIC. We focus here on an AIRC structure with output inhibition that combines two AICs in a single structure. We theoretically demonstrate its superior properties, which are linked to the intrinsic properties of a specific AIC structure with output inhibition. This controller ensure structural stability and structural perfect adaptation properties for the controlled network under mild assumptions, meaning that this property is independent of the parameters of the network and the controller. The results are very general and valid for the class of unimolecular mass-action networks as well as more general networks, including cooperative and Michaelis-Menten networks. We also provide a systematic and accessible computational way for verifying whether a given network satisfies the conditions under which the structural property would hold. Finally, we also propose a possible implementation of such a regulation mechanism using an intein-based synthetic circuit.

\stoptoc
\section{Introduction}


In his pioneering works, Walter B. Cannon laid the groundwork for studying biological and physiological regulation and coined the term ``homeostasis''. This concept, which he introduced in 1929, describes the ability of living organisms to regulate their internal processes \cite{Cannon:29}. Homeostasis encompasses many physiological processes, such as the regulation of body temperature, blood sugar, and cholesterol \cite{Karin:16,Scheepers:23} as well as neuronal function in neuroscience \cite{Yang:23}. Similar ideas were later adapted and further developed in the seminal works of Jacob, Monod, and co-workers  \cite{JacobMonod:61,Monod:63}.\\

A stringent type of homeostatic regulation of particular recent interest is perfect adaptation.  It refers to the ability of a network that is subjected to persistent stimuli (or perturbations) to adapt to that stimulus by having the molecular concentration of one or more of its constituent species return to their pre-stimulus level. Robust perfect adaptation is achieved when the adaptation property still holds in spite of perturbations of the network's parameters or topology. The phenomenon of robust perfect adaptation has been observed in biological networks \cite{Barkai:97,Yi:00,ElSamad:02,Alon:07,Muzzey:09}, and the elucidation of its underlying mechanisms, design principles, and topological requirements has been extensively reported using both theoretical and computational approaches \cite{Yi:00,Alon:07, Drengstig:08, Ma:09, Ni:09, Briat:15e, Araujo:18, Khammash:21, Gupta:22, Araujo:23, Hirono:23, Briat:20:Structural, Blanchini:21, Khammash:21}, among others.  It is important to note that the concept of robust perfect adaptation is not uniform across all scenarios, and variations exist depending on the exact class of perturbations to which adaptation is robust. It is therefore essential to specify the particular type of perturbations being considered, as this can influence the underlying conditions needed to achieve perfect adaptation.\\

The \emph{perfect adaptation design problem}, is the problem of designing mechanisms that ensure perfect adaptation for a given biochemical network. This problem has recently attracted a lot of attention, partly due to its parallels with the so-called \emph{regulation problem} and \emph{disturbance rejection problem} in control engineering. These problems involve designing industrial controllers that enable the controlled system to compensate for unmeasured environmental variations and perturbations (disturbances), while keeping the controlled variables at a pre-defined, chosen level (set-point). A simple control strategy known to structurally solve the regulation and the disturbance rejection problems is the so-called \emph{integral controller}, which forms the core of Proportional-Integral-Derivative (PID) controllers, a prevalent control algorithm found in a vast range of applications \cite{Astrom:95}. This connection between biology, control engineering, and mathematics echos Wiener's Cybernetics \cite{Wiener:61}, which proposed the existence of control motifs in living systems and suggested that they be studied using the same tools used to study man-made control systems. \\

The Antithetic Integral Controller (AIC) \cite{Briat:15e} is one such generic motif. It employs a simple strategy that ensures robust perfect adaptation properties in biochemical systems in both the deterministic and stochastic settings through the implementation of an integral feedback module. Its core mechanism is that of molecular sequestration \cite{Chen:12}, a method for comparing two molecular levels. The theoretical properties of the Antithetic Integral Controller and its leaky variants have now been well-studied, in both the deterministic \cite{Briat:17ACS, Qian:19,Olsman:19,Olsman:19b,Briat:19:Logistic, Briat:20:Structural, Filo:21a, Filo:21b} and the stochastic \cite{Briat:15e, Briat:18:Interface, Aoki:19} settings. \blue{Notably, the stochastic model of a specific AIC type -- referred to in this paper as naAIC and illustrated in Figure~\ref{main:fig:AICtable} -- exhibits structural stability properties under very mild conditions. Interestingly, this property is absent in its deterministic counterpart, which requires significantly stronger conditions to achieve stability. A key requirement for effective control is \emph{set-point admissibility}, which ensures that the topology of the controlled network allows molecular levels of the controlled species to reach a desired set-point, potentially through appropriate tuning of the control parameters. As discussed in Box 1 for the specific case of a gene expression network and more generally in \cite{Briat:15e,Briat:19:Logistic}, the original AIC -- referred to here as naAIC and defined in \eqref{main:eq:AIC:naAIC} -- is inherently limited to regulating the controlled species above its basal expression level. Sub-basal levels are unattainable, consistent with the fact that the naAIC structure influences the transcription rate of mRNA or, more broadly, acts as a direct or indirect activator of the controlled species. In contrast, a less explored AIC structure -- denoted here as niAIC and defined in \eqref{main:eq:AIC:niAIC}, also discussed in the context of gene expression regulation in Box 1 -- operates as an inhibitor of the controlled species. This structure allows for regulation of protein levels below their basal expression level. This distinction highlights the critical impact of input selection and controller network topology on the set of admissible set-points for the controlled molecular species.}\\

\blue{A natural question arises: can a controller topology be designed to both expand the set of admissible set-points and ensure the necessary stability properties of the controlled network? Developing a general strategy for this purpose would not only enhance the theoretical appeal of the design but also improve its practical robustness to changes in the process. For instance, one could easily envision scenarios where environmental or cellular context changes render a previously admissible set-point non-admissible. Such changes would inevitably destabilize the controlled network, potentially resulting in oscillations, saturations, or even cell death.}\\

The Antithetic Integral Rein Controller (AIRC), introduced in \cite{Gupta:19}, exploits the bimolecular structure of the AIC by utilizing its two controller species in separate channels for network modulation and giving rise to a wide variety of possible control topologies, as can be seen in Figure~\ref{main:AIRC:AIRC}. The specific AIRC considered in \cite{Gupta:19}, which we refer here as an \emph{aiAIRC with output inhibition} and depicted in Figure~\ref{main:AIRC:aiAIRC}, employs the first controller species as an indirect activator of the controlled species and the second as a direct inhibitor of the controlled species. This configuration offers certain advantages, including the possibility to reach a wider range of set-points, compared to the naAIC described above, and potentially improving the transient response of the network \cite{Gupta:19}. \blue{This controller network topology could potentially expand the range of admissible set-points -- a possibility that, to date, has not been rigorously proven in general. Similarly, the analysis of its essential stability properties has largely been limited to the context of gene expression \cite{Gupta:19}, providing only limited insight into more general network configurations. While set-point admissibility is an important prerequisite for adaptation, the central concern is stability. Stability ensures that the controlled network reliably converges to the steady state associated with the desired set-point for the controlled molecular species. Once again, both the choice of inputs to the controlled network and the controller topology play a crucial role in determining stability. These decisions must be made carefully, in accordance with the system's property in order to ensure the derired properties for the controlled network.}\\

Our aim in this work is to elucidate the properties of both the aiAIRC with output inhibition and its component, the niAIC with output inhibition. We first show that, for unimolecular networks, the aiAIRC with output inhibition yields nonnegative equilibria regardless of the value of the set-point,  implying that, in contrast to the naAIC, all positive set-points are admissible for that controller. This finding is established here more broadly and simply than in previous studies \cite{Gupta:19,Plesa:23}. Additionally, we show that the \blue{aiAIRC} with output inhibition exhibits an interesting equilibrium switching behavior in the strong sequestration regime, which corresponds to the regime where the controller species sequestration rate is large. This regime has been extensively studied in theoretical analyses of antithetic integral control structures \cite{Qian:18}. In this regime, the AIRC toggles between activation and output inhibition of the controlled species based on the set-point's relation to the basal expression level, behaving alternately like an naAIC or an niAIC with output inhibition.  This behavior underscores the importance of studying the much less explored niAIC with output inhibition.\\

We first establish that the niAIC with output inhibition locally behaves like a filtered Proportional-Integral (PI) controller, in contrast to the naAIC's sole integral control action, as discussed in similar context in \cite{Filo:21a,Filo:21b}. \blue{A key difference of PI controllers over purely integral controller, as implemented by the naAIC in \cite{Briat:15e,Briat:19:Logistic}, is that the proportional action has the capacity of stabilizing the dynamics of a controlled system and improving its transient behavior, as also pointed out in \cite{Briat:18:Interface}. We further demonstrate that this class of controllers solves the perfect adaptation problem and provide necessary conditions on the network for adaptation that are similar in flavor to those obtained for the naAIC in \cite{Briat:19:Logistic}. We then leverage ideas from control theory, dynamical systems, and linear algebra to show that when the network is unimolecular and stable, the closed-loop network will also be stable, provided that the set-point is admissible (i.e. below the basal expression level) for all possible values for the gain of the controller and its sequestration rate. Exploiting the stabilization properties of the controller, we then extend these results to a more general class of networks, which we refer to \emph{output unstable networks}, for which the same result holds but with no restriction on the set-point value. These results are further extended to the AIRC with output inhibition.}\\

Addressing general nonlinear networks presents a challenge due to their diversity and complex technical aspects. Despite these hurdles, we successfully establish structural stability principles applicable to these networks. These principles can be assessed on case-by-case manner using readily available computational methods, which we outline. Interestingly, we find that for certain significant subclasses, such as cooperative and Michaelis-Menten networks, those extensive additional analyses are unnecessary. Indeed, in these case we can directly apply  general results similar to those established for the unimolecular case.\\

Concluding our study, we propose a feasible implementation of the controller using intein-based systems \cite{Nanda:20,Wang:22}. This approach draws on existing knowledge in synthetic biology and previous developments in intein-mediated controllers, most notably the work of Anastassove et al. \cite{Anastassov:23}.\\

\blue{The main takeaway of this paper is that AIRC networks offer a powerful framework for controlling reaction networks, possibly in the presence of non-monotonic reaction paths that may alternate between activation and inhibition depending on the network configuration. These controller networks address set-point limitations and ensure structural stability, both of which are critical for controlling highly uncertain systems using controllers that face similar constraints and for which fine-tuning may not be feasible. This paper further complete past works from the same authors and provides a mathematical foundation for these claims in the context of niAIC with output inhibition and its associated aiAIRC structure. We believe this work marks a significant step forward in advancing a robust mathematical control theory for reaction networks, systems biology, and synthetic biology.}

\subsection*{Reaction Networks}

Reaction networks are a very powerful modeling paradigm that can be used to represent any population system, such as those arising in biology, ecology, epidemiology, etc.\cite{Goutsias:13}. A reaction network $(\Xz,\mathcal{R})$ consists of a set of $n$ molecular species $\Xz=\{\X{1},\ldots,\X{n}\}$ that interact through $K$ reaction channels $\mathcal{R}=\{\mathcal{R}_1,\ldots,\mathcal{R}_K\}$ denoted as
\begin{equation}\label{main:eq:RN}
 \mathcal{R}_k:\ \sum_{i=1}^n\zeta_{k,i}^l\X{i}\rarrow{\lambda_k}\sum_{i=1}^n\zeta_{k,i}^r\X{i},\ k=1,\ldots,K
\end{equation}
where $\zeta_{k,i}^\ell,\zeta_{k,i}^r\in\mathbb{Z}^n_{\ge0}$ are the left and right stoichiometric vectors. The stoichiometric vector of reaction  $\mathcal{R}_k$ is given by $\zeta_k:=\zeta_k^r-\zeta_k^\ell\in\mathbb{Z}^n$ where $\zeta_k^r=\col(\zeta_{k,1}^r,\ldots,\zeta_{k,n}^r)$ and $\zeta_k^l=\col(\zeta_{k,1}^l,\ldots,\zeta_{k,n}^l)$. 
Each reaction $\mathcal{R}_k$ is also described by its propensity function $\lambda_k$ that describes, in the deterministic setting, the rate at which this reaction occurs. Such rates may take multiple forms such as mass-action, Hill, Michaelis-Menten, etc. \cite{Voit:00,Alon:07}. In particular, when the network only has mass-action kinetics, only the reaction rates are indicated on top of the arrow in \eqref{main:eq:RN}. This will be explicitly mentioned when this is the case. In all those cases, we define $\mathcal{P}$ to be the set of the network parameters, that is, the set of all parameters describing the reaction rates or, more generally, all the parameters of the propensity functions.\\

In the deterministic setting, reaction networks are quantitatively described in terms of a vector of molecular concentrations, denoted here by $x(t)$, which evolves on a state-space $\mathcal{S}\subseteq\mathbb{R}_{\ge0}^n$. As a result, the propensity functions $\lambda_k:\mathcal{S}\mapsto\mathbb{R}_{\ge0}$ are defined in such a way that $\mathcal{S}$ is forward invariant; i.e. for all $x_0\in\mathcal{S}$, we have that $x(t)\in\mathcal{S}$ for all $t\ge0$. This will be tacitly assumed to be the case in the rest of the paper. The dynamical model representing the deterministic reaction network \eqref{main:eq:RN} is, therefore, given by the Reaction Rate Equation (RRE)
\begin{equation}\label{main:eq:RRE}
\begin{array}{rcl}
  \dot{x}(t)&=&\displaystyle\sum_{k=1}^K\zeta_k\lambda_k(x(t)),\ t\ge0\\ 
  x(0)&=&x_0.
\end{array}
\end{equation}
The point $x^*$ is said to be an equilibrium point for the above dynamics if $\textstyle\sum_{k=1}^K\zeta_k\lambda_k(x^*)=0$.

\subsection*{Perfect Adaptation}

The perfect adaptation property is the property of a reaction network that certain molecular counts will return to the previous equilibrium levels after the appearance of environmental changes and network perturbations. This is one of the many types of homeostatic behaviors that can be found in living organisms \cite{Cannon:29} or that can be theoretically constructed \cite{Alon:07,Ma:09,Briat:15e,Golubitsky:17,Khammash:21,Gupta:22}. The version we consider in this paper is the one below:
\begin{define}[Perfect Adaptation]\label{main:def:RPA}
  Consider a reaction network $(\Xz,\mathcal{R})$ and a species $\Yz\in\Xz$. The species $\Yz$ is said to exhibit the perfect adaptation property in the network $(\Xz,\mathcal{R})$ if
  \begin{enumerate}
  \item the equilibrium point $x^*$ is (locally) asymptotically stable for the dynamics of the network \eqref{main:eq:RRE}, and
  \item the equilibrium value of the concentrations of the species $\Yz$, denoted by $y^*$, is independent of all the parameters in a subset of $\mathcal{P}$.
  \end{enumerate}
\end{define}

The definition of perfect adaptation we consider consists of two components. The stability component is here to guarantee that the network goes back to the same equilibrium after a state perturbation. The second component is the equilibrium perfect adaptation property which indicates that if we perturb the parameters in some subset of $\mathcal{P}$, the equilibrium for $\Yz$, denoted by $y^*$, remains the same. Note, however, that the equilibrium states may not be the same. \blue{Stronger definitions of perfect adaptation exist. For example, \emph{robust perfect adaptation} requires that the perfect adaptation property remains valid under sufficiently small perturbations to certain network parameters. On the other hand, the property is termed \emph{structural} when it holds for all positive values of specific network parameters. Structural perfect adaptation can be seen as the ultimate form of robustness, which is ensured by the structure of the network rather than by the values of its parameters. We will particularly focus on this latter variant as it is particularly desirable in biological contexts where network uncertainties are significant and fine-tuning is challenging, if not impossible. }\\

Perfect adaptation and its variants are rather strong properties: endogenous and engineered networks may not immediately satisfy them by design. Analogously, industrial systems are not naturally self-regulating (e.g. robots) and require additional components to make them fully functional and reliable. The design of such components is the purpose of control engineering and, keeping that exact same state of mind, we may pose the following question: how to turn a non-adapting network into a perfectly adapting one? There are different possible ways to do so depending on how the problem is formulated and the different constraints. The version of the problem we consider is the one below:

\begin{define}[Perfect Adaptation \blue{Design} Problem]\label{main:def:RPAP}
  Consider a reaction network $(\Xz,\mathcal{R})$ and a species $\Yz\in\Xz$. The perfect adaptation problem consists of finding a controller network $(\Zz,\mathcal{R}^c)$ such that
  \begin{enumerate}
  \item $\Yz$ exhibits the perfect adaptation property in the network $(\Xz\cup\Zz,\mathcal{R}\cup\mathcal{R}^c)$, and
  \item $y^*$ is equal to a tunable set-point which is a known function of the controller parameters only (i.e. it is independent of the parameters of the network $(\Xz,\mathcal{R})$).
  \end{enumerate}
\end{define}

The above problem is a reaction network analogue of the more standard engineering control problems in which controllers are designed to ensure certain specifications for the controlled systems. In the present case, the controllers are to be implemented within cells using biological components, whence the name  \emph{in-vivo controller}, rather than in a computer. Due to the nature of the substrate, specific constraints and challenges, usually absent in more standard engineering problems, are to be considered to ensure the proper operation of the controllers; see e.g. \cite{Khammash:22:Cyber,Briat:23:Annual}. The requirement that the set-point be adjustable is facultative here but allows for an increased flexibility and the possible optimization of the overall controlled network, as discussed in \cite{Briat:17ACS} in the context of the control of metabolic networks.\\

When the set-point needs to be adjusted, the question of which set-point values are compatible with the network arises. This leads us to the concept of set-point admissibility:
\begin{define}[Set-point admissibility]
  Consider a reaction network $(\Xz,\mathcal{R})$ and a species $\Yz\in\Xz$. Moreover, decompose $\mathcal{P}$ as $\mathcal{P}_f\cup\mathcal{P}_t$ where $\mathcal{P}_f$ is a set of fixed network parameters and $\mathcal{P}_t$ is a set of tunable network parameters.

  We say that a set-point $r$ is admissible for the species  $\Yz$ if, given the values for the parameters in $\mathcal{P}_f$, we can select values for the parameters in $\mathcal{P}_t$ such that $x^*\ge0$ and $y^*=r$ where $x^*$ is an equilibrium point for the  dynamics \eqref{main:eq:RRE}. The admissible set for the output $\Yz$  is the set of its admissible values.
\end{define}

\blue{The motivation for dividing the parameter set \( \mathcal{P} \) into two subsets, \( \mathcal{P}_t \) and \( \mathcal{P}_f \), is to distinguish between parameters that can be actuated and those that are inherent to the network's structure. In this context, the admissibility of a chosen set-point is a necessary condition for the proper functioning of the controlled network around that set-point. Setting a non-admissible set-point effectively asks the system to reach an unattainable value, potentially leading to unstable or diverging state trajectories.}\\

\blue{Set-point admissibility depends on the network's structure, the specific output considered, and the choice of tunable parameters. Selecting different tunable parameters results in different admissible sets for the output. This is illustrated in Box 1, where we examine a gene expression network first with the transcription rate as the tunable parameter and then with the protein degradation rate. In this paper, we use set-point admissibility as a preliminary criterion for deciding which parameters should be used to actuate the network. Once these parameters are selected, we can analyze the network's properties and propose a suitable controller structure to solve the perfect adaptation design problem.}\\

\blue{Finally, although set-point admissibility is essential for the perfect adaptation design problem, it is meaningless without stability. Set-point admissibility only indicates the possibility of attaining a certain value for the controlled species at steady state. For this to be meaningful, the state of the controlled network must also converge to that steady state. Without convergence, the system may exhibit diverging or oscillatory trajectories, rendering the concept of perfect adaptation meaningless, as no actual adaptation occurs in the controlled network. Establishing the conditions for stability and structural stability is therefore the main goal of this study.}\\

\subsection*{Antithetic Integral (Rein) Controllers}

The Antithetic Integral Controller (AIC), introduced in \cite{Briat:15e} and further discussed in \cite{Briat:19:Logistic,Olsman:19,Olsman:19b}, is one of the few, structurally simple, integral controllers that can be implemented in terms of chemical reactions and that can operate in both the deterministic and the stochastic settings. Other integral controllers of course exist, such as zeroth-order integral controllers \cite{Ni:09} and autocatalytic integral controllers \cite{Drengstig:12,Briat:16a,Xiao:18,Briat:19:Logistic}, but are unfortunately only functional in the deterministic setting. The AIC relies on molecular sequestration as core principle, which consists of two complementary molecular species strongly sequestering each other. Sequestration is a well-known mechanism that has been naturally selected to achieve certain pivotal functions within living organisms, such as bacterial stress response through the use of sigma-factors \cite{Trevino:13}. On a more theoretical level, the sequestration reaction plays an essential role in the solution of realization problems \cite{Oishi:10b,Fages:17} as it can be interpreted as a molecular implementation of a subtraction operator \cite{Briat:15e,Briat:19:Logistic,Cuba:23,Filo:23b}. The internal bimolecular structure of the AIC makes it a very flexible controller that can easily implement negative and positive feedback loops using both activation or inhibition interactions, as illustrated in Figure~\ref{main:fig:AICtable} in Box 1. More advanced structures built upon the AIC can be generated through a clever addition of reactions and species in order to achieve more complex behaviors and performance requirements; see e.g. \cite{Frei:22,Filo:22,Anastassov:23}. The common denominator to all those designs is that the controlled species should repress itself through a sequence of reactions involving controller species, a mechanism that makes the AIC a member of the family of negative feedback controllers.\\

A particular extension of the AIC is the so-called Antithetic Integral Rein Controller (AIRC), which exploits the bimolecular structure of the AIC to implement a second actuation channel acting in opposition to the first one. The concept of rein control has been introduced by Saunders in \cite{Saunders:98} as a central mechanism for glucose regulation and was shown to have enhanced robustness properties over a unidirectional control mechanism. Therefore, it is expected that the AIRC benefits from the same advantages over the AIC which only acts on the controlled species in a unidirectional manner and may offer limited performance in some scenarios. As shown in  Figure~\ref{main:AIRC:AIRC}, the AIRC admits multiple possible configurations depending on the role of the controller species (activator or inhibitor) and the reaction paths between the actuated and the controlled species. As AIRCs can be interpreted as the superposition of two AICs, the same rules apply and one must select roles for the controller species in a way that makes the controller a negative feedback controller.\\

The aiAIRC with output inhibition, shown in Figure~\ref{main:AIRC:aiAIRC}, is a special case of the aiAIRC, which consist of the superposition of an nAIC and an niAIC with output inhibition. This structure was considered in \cite{Gupta:19} in the special case of gene expression control, where some of its properties were discussed. The objective of this paper is to address  this problem in a much more general setting.

\begin{figure}[H]
     \centering
     \begin{subfigure}[b]{0.55\textwidth}
         \centering
    \includegraphics[width=\textwidth]{./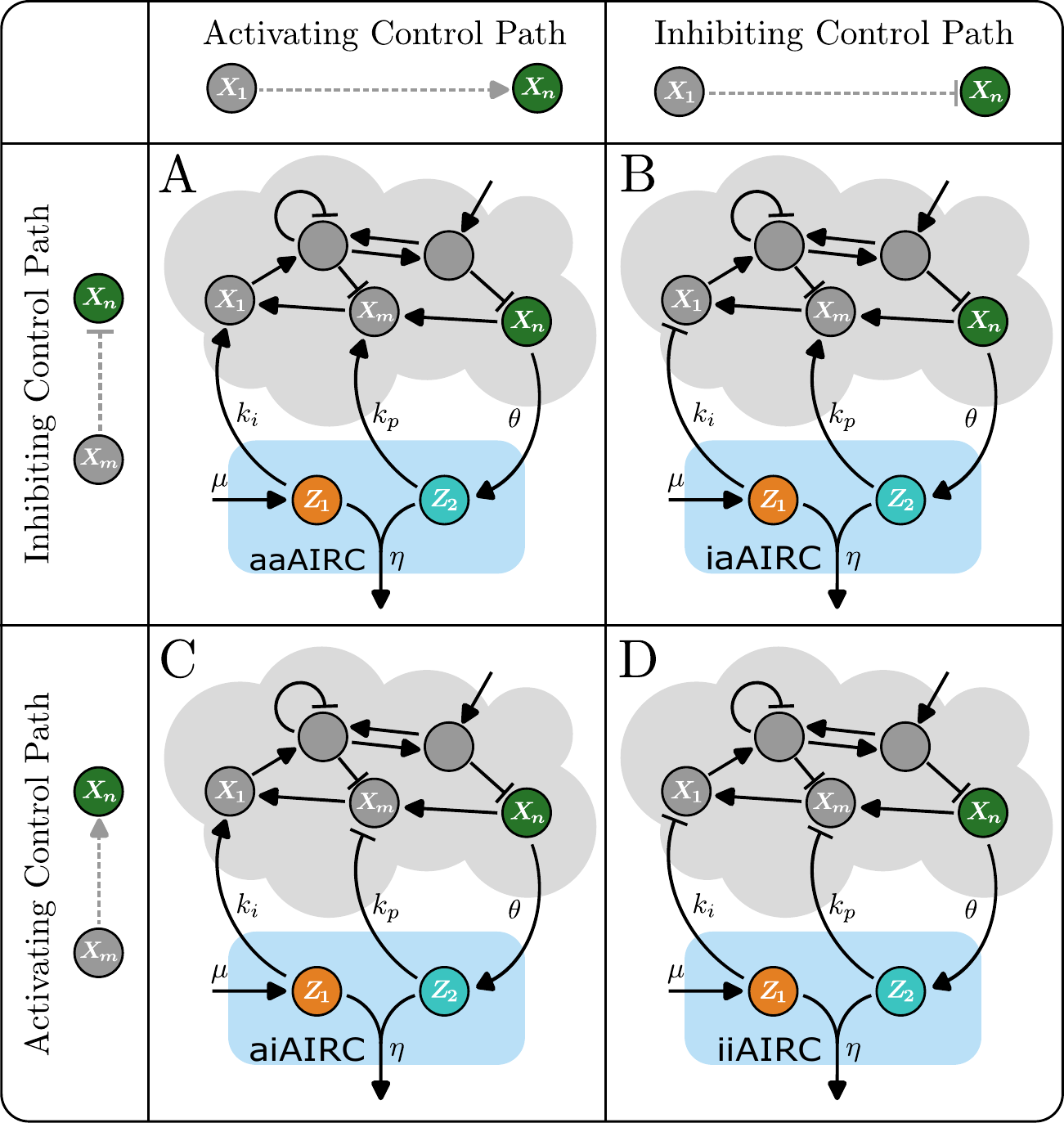}
    \caption{Example of possible AIRCs.}\label{main:AIRC:AIRC}
     \end{subfigure}
     \hfill
     \begin{subfigure}[b]{0.4\textwidth}
         \centering
         \includegraphics[width=\textwidth]{./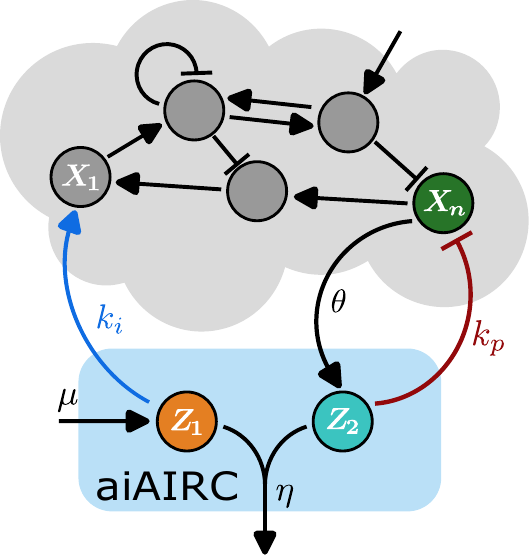}
    \caption{The aiAIRC with output inhibition.}\label{main:AIRC:aiAIRC}
     \end{subfigure}
     \caption{(a). Different possible structures of the AIRC depending on whether the controller species $\Z{1}$ and $\Z{2}$ act as activators or inhibitors on the actuated species $\X{1}$ and $\X{m}$ and whether the actuated species act as activators or inhibitors for the controlled species $\X{n}$. In the first column, $\X{1}$ is an activator for $\X{n}$ whereas it is an inhibitor in the second one. Similarly, in the first row $\X{m}$ is an inhibitor of $\X{n}$ where it is an activator in the second one. The roles of the controller species is chosen so that the overall feedback loop is a negative feedback loop and have opposite action on the controlled species. For instance, when $\X{1}$ activates $\X{n}$ and $\X{m}$ inhibits it, a suitable AIRC is given by the aaAIRC, since that $\X{n}\regulationarrow[act]\Z{2}\regulationarrow[rep]\Z{1}\regulationarrow[act]\X{1}\regulationarrow[act]\X{n}$ and $\X{n}\regulationarrow[act]\Z{2}\regulationarrow[act]\X{m}\regulationarrow[rep]\X{n}$, showing that $\Z{1}$ acts as an activator and $\Z{2}$ as a repressor, as required to make the topology a rein controller. One can also observe that the two feedback loops implemented by the aaAIRC are negative feedback loops. Analogous conclusions hold for all the topologies. (b) The aiAIRC with output inhibition consists of two AICs: an naAIC  that activates the production of $\X{1}$ (blue pointed arrow) and an niAIC that directly inhibits the output $\X{n}$ (red blunt head arrow).}
\end{figure}

\section{Results}

\subsection*{The aiAIRC with output inhibition allows for an arbitrary set-point.}

As discussed in Box 1, the naAIC and niAIC induce mutually exclusive set-point admissibility conditions in line with their activatory or inhibitory nature and in the unidirectionality of their actuation mechanism. The intuition is that those mutually exclusive conditions could be perhaps made compatible through the consideration of a combination of those controllers, that is through the consideration of an aiAIRC with output actuation described by the reaction network

\begin{equation}\label{main:eq:AIC:aiAIRCoi}
  \phib\rarrow{\mu}\Z{1},\ \Yz\rarrow{\theta}\Yz+\Z{2},\ \Z{1}+\Z{2}\rarrow{\eta}\phib,\  \Z{1}\rarrow{k_i}\Z{1}+\X{1},\ \X{n}+\Z{2}\rarrow{k_p}\Z{2}
\end{equation}
where the reaction rates $\mu,\theta,\eta,k_p,k_i>0$ are the parameters of the controller and are assumed to be freely adjustable. It is assumed here that the controlled species is $\X{n}$. The closed-loop network consisting of the interconnection of a unimolecular mass-action network of the form \eqref{main:eq:RN} and the aiAIRC \eqref{main:eq:AIC:aiAIRCoi} is described by the dynamical system
\begin{equation}\label{main:eq:mainsystCL}
  \begin{array}{rcl}
    \dot{x}(t)&=&Ax(t)+e_1k_izu_1(t)-e_n  x_n(t) k_pz_2(t)+b_0\\
    \dot{z}_1(t)&=&\mu-\eta z_1(t)z_2(t)\\
    \dot{z}_2(t)&=&\theta x_n (t)-\eta z_1(t)z_2(t)
  \end{array}
\end{equation}
where $x\in\mathbb{R}_{\ge0}^n$ is the vector of concentrations of the species of the network whereas $z_1,z_2\in\mathbb{R}_{\ge0}$ are the concentrations of the species of the controller. The matrix $A$ is the matrix describing the dynamics of the network to be controlled whereas $b_0$ is the vector of basal rates \blue{which are obtained from applying \eqref{main:eq:RRE} to the case of unimolecular networks, that is, we have $\textstyle\sum_{k=1}^K\zeta_k\lambda_k(x)=:Ax+b_0$.} Moreover, in that case, the set of all fixed parameters $\cal{P}_f$ are those in the matrix $A$ and the vector $b_0$ whereas the set of tunable parameters are those of the controller network, that is, $\mathcal{P}_t=\{k_i,k_p,\mu,\theta,\eta\}$. For the sake of convenience, we define $r:=\mu/\theta$ which, as we shall see later, coincides with the set-point. The vector $e_i$ is the vector of all zeros except at the index $i$ where the entry is equal to one. \blue{Practically speaking, requiring that the network is unimolecular and mass-action is equivalent to saying that the dynamics of the network is linear. While their consideration may appear limited in scope, they have the benefits of being simple to introduce and to allow for a direct, explicit mathematical analysis which may be extrapolated to the more general nonlinear setting. More details about this are given in Section S4 of the SI. }\\

The following result is a simplified version of Proposition \ref{prop:eqpt} of the SI:
%
\begin{proposition}\label{main:prop:eqpt}
  Assume that $A$ is Hurwitz stable\footnote{A matrix is Hurwitz stable if all its eigenvalues have negative real part.} and that $e_n ^TA^{-1}e_1\ne0$. Then, the equilibrium point $(x^*,z_1^*,z_2^*)$ of the closed-loop system \eqref{main:eq:mainsystCL} is unique, nonnegative and such that $x_n^*=r:=\mu/\theta$.
\end{proposition}
%
\blue{This result indicates that under the assumptions regarding the matrix \( A \), all set-points \( r > 0 \) are admissible for the system \eqref{main:eq:mainsystCL}. This confirms our initial intuition, as these assumptions are independent of the set-point value \( r \), and this value can be set through an appropriate choice of the tunable parameters \( \mu \) and \( \theta \). The assumption that \( A \) is Hurwitz stable is equivalent to stating that the solution of the uncontrolled network dynamics \( \dot{x} = A x + b_0 \) is stable and globally converges to its unique nonnegative equilibrium point \( -A^{-1} b_0 \). While such behavior is common in practice, some networks may exhibit sustained oscillations. In such cases, the proposed controller structure may not be adequate for solving the associated perfect adaptation problem. However, the above result can be adapted to address the more general case where \( A \) is not Hurwitz stable, as stated in Proposition \ref{prop:eqpt} of the SI.}

\subsection*{The aiAIRC with output inhibition behaves as a filtered proportional-integral controller}

This controller locally acts as a filtered Proportional-Integral (PI) controller consisting of the combination of a proportional action, which depends directly on the output/error, and an integral action, which depends on the integral of the output/error. To show this, consider the linearized dynamics of \eqref{main:eq:mainsystCL} about the unique equilibrium point
\begin{equation}\label{main:eq:mainsystCL:linear}
\begin{array}{rcl}
  \dot{\tilde{x}}(t)&=&A\tilde{x}(t)-e_n k_pz_2^*\tilde{y}(t)+e_1k_i\tilde{z}_1(t)-e_n  k_pr \tilde{z}_2(t)\\
  \dot{\tilde{z}}_1(t)&=&-\eta z_2^*\tilde{z}_1(t)-\eta z_1^*\tilde{z}_2(t)\\
  \dot{\tilde{z}}_2(t)&=&\theta \tilde{y}(t)-\eta z_2^*\tilde{z}_1(t)-\eta z_1^*\tilde{z}_2(t)
\end{array}
\end{equation}
where $\tilde{x}=x-x^*$, $\tilde{z}_1=z_1-z_1^*$, and $\tilde{z}_2=z_2-z_2^*$. One can observe that the controller acts at three levels: the first one lies at the level of the dynamics of the output through the term \blue{$-e_nk_pz_2^*\tilde{y}(t)$}, the second one at the level of the production of the species $\X{1}$ through the term $e_1k_i\tilde{z}_1(t)$, and the last one at the level of the degradation of the output through the term $-e_n  k_pr \tilde{z}_2(t)$. The first term corresponds the proportional action as it depends proportionally on the output, the second and the third terms correspond to two integral actions with the difference that the former is activating whereas the latter is inhibiting. The presence of the PI action can be further emphasized by formulating the overall system into the interconnection of the network and the controller in the Laplace domain. In this domain, the network can be represented as a map from the input $\widehat{u}(s)$ to the controlled output $\widehat{x}_n(s)$ as
\begin{equation}
  \widehat{x}_n(s)=\underbrace{e_n^T(sI-A)\begin{bmatrix}
  e_1 & -e_n
\end{bmatrix}}_{\mbox{Network}}\widehat{u}(s),
\end{equation}
while the control input can be expressed as
\begin{equation}\label{eq:PILaplace}
  \widehat{u}(s)=\left(\underbrace{\begin{bmatrix}
  0\\
  -k_pz_2^*
\end{bmatrix}}_{\mbox{Proportional Action}}+\underbrace{\dfrac{1}{(s+\eta(z_1^*+z_2^*))}\dfrac{1}{s}\begin{bmatrix}
  -k_i\eta\theta z_1^*\\
  \theta k_pr(s+\eta z_2^*)
\end{bmatrix}}_{\mbox{Filtered Integral Action}}\right)\widehat{x}_n(s).
\end{equation}
\blue{As previously mentioned, the controller comprises two components. The first is a proportional component that depends linearly on the output species \( \X{n} \) and is present only in the second part of the control input -- the channel that influences the degradation of the controlled species. Notably, this proportional term is absent in the naAIC architecture discussed in Box 1 and in earlier works on integral control of reaction networks \cite{Briat:15e,Briat:19:Logistic}. The second component is a low-pass filtered integral term, consisting of an integration operation -- represented by \( 1/s \), which is the Laplace transform of the integration operation  -- followed by a low-pass filter operation, corresponding to \( 1/(s + \eta(z_1^* + z_2^*)) \). The inclusion of the low-pass filter does not adversely affect the integral action, so zero steady-state error and, therefore, perfect adaptation are maintained provided that the dynamics of the controlled network is stable. We can observe that the filtered PI structure is preserved as long as \( k_p \ne 0 \). If \( k_p = 0 \), the controller reduces to an antithetic integral controller of the naAIC form, as discussed in \cite{Briat:15e,Briat:19:Logistic} and in Box 1. Conversely, when \( k_i = 0 \), the controller loses its naAIC arm but retains integral action through the arm associated with the niAIC, therefore preserving the perfect adaptation property under the assumption that the controlled network is stable.}

\subsection*{The aiAIRC with output inhibition has a switching behavior in the strong sequestration regime.}

A successful technique in the analysis of antithetic structures consists of letting the sequestration rate go to infinity, a regime we call here the \emph{strong sequestration regime}. This procedure allows one to perform model reduction yielding a simplification of the dynamical and equilibrium equations from which insights could be more easily drawn from. In fact, it can be shown that in this strong sequestration regime yields the expressions $z_1(t)\approx \max\{0,I(t)\}$, $z_2(t)\approx -\min\{0,I(t)\}$, and $\dot{I}(t)=\mu-\theta y(t)$ in which we can observe  the presence of the integral action.\\

However, we are more interested here in the behavior of the equilibrium point of the closed-loop network \eqref{main:eq:mainsystCL} in that regime. The following result, which is a simplified version of Proposition \ref{prop:switching} in the SI, provides the description of this behavior:
\begin{proposition}\label{main:prop:switching}
  Assume that $A$ is Hurwitz stable, then the basal output expression level $g_0$ for the system is given by $g_0=-e_n^TA^{-1}b_0$. Assuming further $e_n ^TA^{-1}e_1\ne0$ yields the following statements for the equilibrium point $(x^*,z_1^*,z_2^*)$ of the closed-loop system \eqref{main:eq:mainsystCL}:
  \begin{enumerate}
  \item If the set-point $r$ is larger than the basal output expression $g_0$, then $(z_1^*,z_2^*)\rarrowl{\eta\to\infty}(u_1^*/k_i,0)$ where $u_1^*=(r-g_0)/g_1$.
  \item If the set-point $r$ is smaller than the basal output expression $g_0$, then $(z_1^*,z_2^*) \rarrowl{\eta\to\infty} (0,u_2^*/k_p)$ where $u_2^*=(g_0-r)/(g_n  r)$.
  %
\end{enumerate}
\end{proposition}

This result can easily be interpreted as follows: when the set-point $r$ is larger than the basal level $g_0$, only the naAIC component of the controller is active whereas in the opposite scenario only the niAIC component is active. As already mentioned, the naAIC has been extensively studied and the case where $r>g_0$ can be considered now as well-understood. However, the niAIC structure is much less understood and clarifying its properties will be one of the objectives of this paper.

\subsection*{The niAIC with output inhibition structurally stabilizes stable unimolecular mass-action networks under some set-point admissibility condition}

The niAIC with output inhibition is described by the following reaction network
\begin{equation}\label{main:eq:AIC:niAICoi}
  \phib\rarrow{\mu}\Z{1},\ \Yz\rarrow{\theta}\Yz+\Z{2},\ \Z{1}+\Z{2}\rarrow{\eta}\phib,\ \X{n}+\Z{2}\rarrow{k_p}\Z{2},
\end{equation}
which yields the following model for the closed-loop network  \eqref{main:eq:RN}-\eqref{main:eq:AIC:niAICoi}
\begin{equation}\label{main:eq:mainsystCL2}
  \begin{array}{rcl}
    \dot{x}(t)&=&Ax(t)-e_n  x_n (t) k_pz_2(t)+b_0\\
    \dot{z}_1(t)&=&\mu-k_p\eta z_1(t)z_2(t)\\
    \dot{z}_2(t)&=&\theta x_n (t)-k_p\eta z_1(t)z_2(t)
  \end{array}
\end{equation}
where we have changed $\eta$ into $\eta k_p$. This latter modification does not change the nature of the results but dramatically simplifies their derivation \cite{Briat:19:Logistic}. The first part of the solution to the perfect adaptation problem stated in Definition \ref{main:def:RPAP} consists of establishing conditions under which a set-point is admissible. This is formulated in the following result:
\begin{proposition}\label{main:prop:eqpt}
 Assume that $A$ is Metzler\footnote{A real, square matrix is Metzler if its off-diagonal entries are nonnegative.} and Hurwitz stable\footnote{A square matrix is Hurwitz stable if its eigenvalues have negative real part.}, then the equilibrium point $(x^*,z_1^*,z_2^*)$ of the closed-loop network \eqref{main:eq:mainsystCL2}
  is unique, nonnegative and such that $x_n^*=r$ if and only if $r<g_0$.
 \end{proposition}

\blue{As previously mentioned, assuming that \( A \) is Hurwitz stable is equivalent to stating that the trajectories of the uncontrolled network globally converge to a unique equilibrium point. While this is a simplifying assumption to ease the exposition of the main results, Theorem \ref{lem:positiveunstable} in the SI demonstrates that it can be relaxed to accommodate cases where \( A \) may exhibit a certain forms of instability.}\\

The second part of the solution to the perfect adaptation problem consists of establishing conditions under which the equilibrium point $(x^*,z_1^*,z_2^*)$ of the closed-loop network \eqref{main:eq:mainsystCL2} is locally asymptotically stable. This can be achieved through the study of the asymptotic stability of the linearized system
 \begin{equation}\label{main:eq:mainsystCL2:linear}
  \begin{bmatrix}
    \dot{\tilde{x}}(t)\\
    \dot{\tilde{z}}_1(t)\\
    \dot{\tilde{z}}_2(t)
  \end{bmatrix}=\underbrace{\begin{bmatrix}
    \bar{A} & 0 & -e_n  k_pr\\
    0 & -\eta u_* & -\mu k_p/u_*\\
    \theta e_n ^T & -\eta u_* & -\mu k_p/u_*
  \end{bmatrix}}_{\mbox{$A_\ell$}}\begin{bmatrix}
    \tilde{x}(t)\\
    \tilde{z}_1(t)\\
    \tilde{z}_2(t)
  \end{bmatrix}
\end{equation}
where $\bar A:=A-e_n  e_n ^Tu_*$, $\tilde{x}=x-x^*$, $\tilde{z}_1=z_1-z_1^*$, and $\tilde{z}_2=z_2-z_2^*$. This linearized system is asymptotically stable if and only if all the eigenvalues of the matrix $A_\ell$ have negative real parts. In order to formulate conditions in terms of  the data of the problem (i.e. the matrix $A$ of the network to be controlled) and the tunable controller parameters $\mu,\theta,k_p,\eta>0$ under which the linearized dynamics is asymptotically stable, we rewrite the system as the interconnection depicted in Figure \ref{main:fig:NyqInt}.B where
\begin{equation}
  H_1(s)=\dfrac{k_p}{s},\quad \textnormal{and}\quad H_2(s)= e_n^T(sI-\bar A)^{-1}e_n\mu+\dfrac{\mu s}{u_*(s+\eta u_*)}.
\end{equation}
Using the concepts and tools described in Box 2 yields the following result
\begin{theorem}\label{main:th:structstab1}
  Assume that $\mu/\theta<g_0$ and that the matrix $A$ is Metzler and Hurwitz stable. Then, the unique equilibrium point $(x^*,z_1^*,z_2^*)$ of the system \eqref{main:eq:mainsystCL2} lies in the nonnegative orthant, is such that $x_n^*=\mu/\theta$,  and is locally exponentially stable for all $\eta,k_p>0$.
\end{theorem}

This result means that all state trajectories will converge to the equilibrium point provided that they start sufficiently close from it and that the conditions of the result are met. The fact that the conditions do not depend on the parameters  $\eta,k_p>0$ makes the implementation of the controller an easier task as fine tuning parameters in while implementation is usually not an option in synthetic biology. Note that those parameters do still have an impact on the dynamics of the system \eqref{main:eq:mainsystCL2} and its quantitative properties such as speed of convergence, oscillations, overshoot, etc.\\

As an illustrative example, consider the following unimolecular mass-action gene expression network with protein maturation and positive feedback:
  \begin{equation}\label{main:eq:RN:maturation}
  \begin{matrix}
    \phib&\rarrow{k_0}&\X{1} & \X{1}&\rarrow{k_{21}}&\X{1}+\X{2} & \X{i}&\rarrow{\gamma_i}\phib,i=1,2,3\\
    \X{2}&\rarrow{k_{32}}&\X{3}& \X{3}&\rarrow{k_{13}}&\X{1}+\X{3}& \X{3}&\rarrow{\nu}\X{3}+\X{3},
  \end{matrix}
  \end{equation}
  where $\X{1},\X{2}$, and $\X{3}$ are the mRNA, protein, and matured protein species, respectively, and where the parameters of the reactions of positive real numbers, which are in the set of fixed parameters $\mathcal{P}_f$. We assume here that we would like to control the maturated species, that is, $\Yz=\X{3}$. The model of the system is given by
\begin{equation}\label{main:eq:RN:maturation:model}
\begin{bmatrix}
  \dot{x}_1\\
    \dot{x}_2\\
    \dot{x}_3
\end{bmatrix}=\begin{bmatrix}
    -\gamma_1 & 0 & k_{13}\\
    k_{21} & -\gamma_2 & 0\\
    0 & k_{32} & \nu-\gamma_3
  \end{bmatrix}\begin{bmatrix}
    x_1\\
    x_2\\
    x_3
  \end{bmatrix}+\begin{bmatrix}
    k_0\\
    0\\
    0
  \end{bmatrix}
\end{equation}
where $x_1,x_2,x_3$ are the mRNA, protein, and matured protein concentrations, respectively. The matrix describing the dynamics of the network is Metzler, by construction, and is Hurwitz stable provided that $\gamma_1\gamma_2\gamma_3-k_{13}k_{32}k_{21}>0$ (Routh-Hurwitz criterion). We also have that
  \begin{equation}
  g_0=\dfrac{k_0k_{21}k_{32}}{\gamma_1\gamma_2(\gamma_3-\nu)-k_{21}k_{32}k_{13}}>0.
\end{equation}
By virtue of Theorem \ref{main:th:structstab1}, assuming that the network \eqref{main:eq:RN:maturation} is stable, we can conclude that the closed-loop network  \eqref{main:eq:RN:maturation}, \eqref{main:eq:AIC:niAICoi}  is locally exponentially stable for all $0<\mu/\theta<g_0$ and all $k_p>0,\eta>0$.


\begin{figure}[H]
  \centering
\begin{minipage}[h]{0.45\linewidth}
    \begin{center}
  \includegraphics[width=\textwidth]{./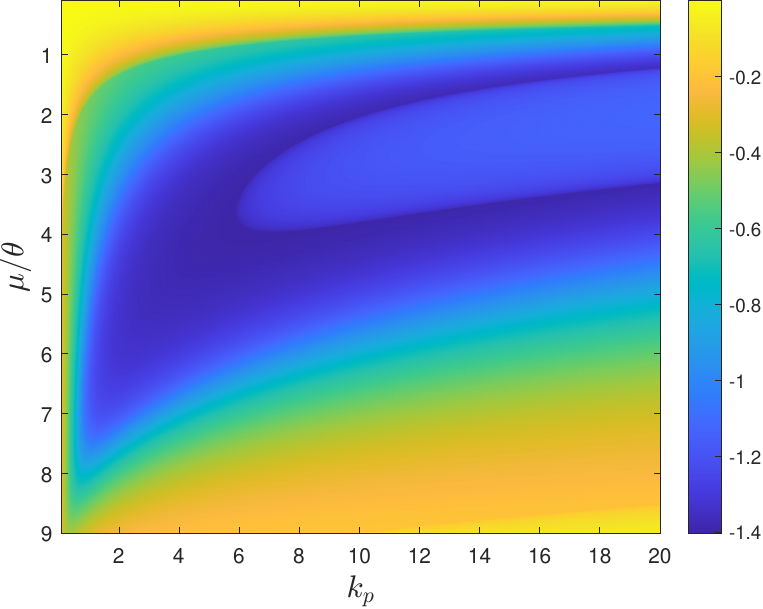}
  \end{center}
\end{minipage}
\begin{minipage}[h]{0.45\linewidth}
    \begin{center}
  \includegraphics[width=\textwidth]{./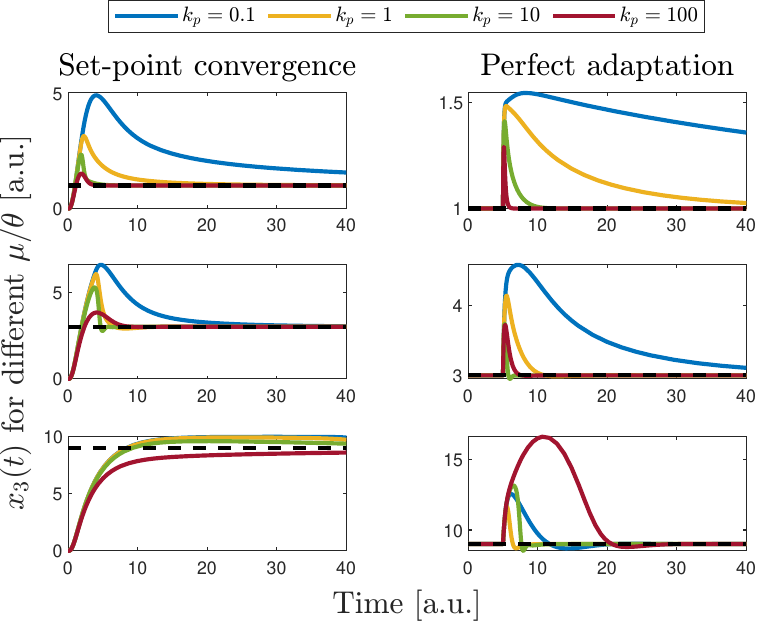}
  \end{center}
\end{minipage}
\caption{\textbf{Left.} Spectral abscissa (i.e. real-part of the rightmost eigenvalue) of the system associated with the \eqref{main:eq:RN:maturation}, \eqref{main:eq:AIC:niAICoi} with the parameters $\gamma_1=1$, $\gamma_2=\gamma_3=2$, $k_{21}=1$, $k_{32}=2$, $k_0=10$, $k_{13}=1$, $\nu=0$, $\eta=100/k_p$, and $\theta=1$ and for various values for $\mu$ and $k_p$. We have that $g_0=10$ and calculations show that the spectral abscissa of $A$ is $-0.3044$ while the spectral abscissa of the closed-loop system may reach smaller values, which indicates that this controller is able to improve the convergence properties of the system near the equilibrium point. \textbf{Right.} Time domain evolution of the concentrations of $\X{3}$ for various values for the set-point $\mu/\theta$ and controller gains $k_p$. The left column depicts simulation results for zero initial conditions (convergence properties) whereas the right column depicts the response of the closed-loop network when the parameter $k_{32}$ changes from 2 to 3 at $t=5$ (perfect adaptation property).}
\end{figure}

\subsection*{The niAIC with output inhibition structurally stabilizes a class of unstable unimolecular mass-action networks under no set-point restriction}


In the previous section, we considered the case where the matrix $A$ describes the dynamics of a stable network. In the present section, we relax this assumption by allowing a more general case where the network is allowed to be unstable in a very specific way, a property that we call "output instability". \blue{While the precise definition of output instability provided in the SI is rather technical, it can be intuitively motivated by analyzing the structure of the niAIC. Since the niAIC acts on the network by actively degrading the controlled species, it seems natural to believe that this controller could still achieve its goal even when the controlled species exhibits unstable behavior. Output instability of the matrix \( A \) is therefore a rigorous characterization of those matrices where only the controlled species is allowed to be unstable -- hence the term "output unstable." The rest of this section demonstrates that relaxing the stability of the network is not only possible but also leads to surprising results, notably regarding set-point admissibility, as shown below:}


\begin{proposition}\label{main:prop:eqpt2}
Assume that $A$ is Metzler and output unstable, then the equilibrium point $(x^*,z_1^*,z_2^*)$ of the closed-loop network \eqref{main:eq:mainsystCL2} is unique, nonnegative and such that $x_n^*=r$ if and only if $g_0\ne0$.
 \end{proposition}

We can observe here that, by relaxing the stability of the network, we allow for more flexibility at the level of the set-point, which is a dramatic change over the previous case for which the set-point value is more restricted. The reason for the emergence of this property is that, as opposed to the stable case, there is no equilibrium output basal level for the controlled species. In this regard, one can just let the output spontaneously increase beyond the set-point value before starting degrading it and making it converge to the desired set-point.\\

Having a suitable equilibrium point is only one part of the solution of the perfect adaptation problem. The second one is the stability of that equilibrium point, which is addressed in the result below:
\begin{theorem}\label{main:th:main:unstable}
  Assume that $A$ is Metzler and output unstable, and that $g_0\ne 0$. Then, the unique equilibrium point  $(x^*,z_1^*,z_2^*)$ of the closed-loop network \eqref{main:eq:mainsystCL2} lies in the nonnegative orthant, is such that $x_n^*=\mu/\theta$, and is locally exponentially stable for all $\eta,k_p,\mu,\theta>0$.
\end{theorem}
This result tells us that all state trajectories will converge to the equilibrium point provided that they start sufficiently close from it and that the conditions of the result are met. In the present case, the stability of the equilibrium point holds unconditionally of the controller parameters, which a drastic change over Theorem \ref{main:th:structstab1} where the set-point value is required to be smaller than the output basal expression level. \blue{In this case, the controller functions both as a stabilizer for the system dynamics and as a regulator for the controlled species. Specifically, the stabilization effect arises from the proportional action in the controller, as described in equation  \eqref{eq:PILaplace}, while the regulation effect is due to the filtered integral action. This dual role is consistent with standard results in the literature on PID control \cite{Astrom:95}.}\\


To illustrate this discussion, consider back the network \eqref{main:eq:RN:maturation} with the model \eqref{main:eq:RN:maturation:model}, which can be shown to be output unstable if and only if
\begin{equation}
  \gamma_1\gamma_2(\nu -\gamma_3)+k_{21}k_{32}k_{13}>0.
\end{equation}
We also have the following expression for $g_0$
\begin{equation}
  g_0=\dfrac{-k_0k_{21}k_{32}}{\gamma_1\gamma_2(\nu-\gamma_3)+k_{21}k_{32}k_{13}}<0,
\end{equation}
and observe that it is negative. This is due to the fact that the network is output unstable, and that this value does not represent the output basal expression level anymore.\\

By virtue of Theorem \eqref{main:th:main:unstable}, assuming that the network \eqref{main:eq:RN:maturation} is output unstable, we can conclude that the closed-loop network \eqref{main:eq:RN:maturation}, \eqref{main:eq:AIC:niAICoi} is locally exponentially stable for all $\mu,\theta,k_p>0,\eta>0$. This statement is illustrated by simulation in Figure \ref{fig:persistent:1} and Figure \ref{fig:persistent:2}.

\begin{figure}[H]
  \centering
  \includegraphics[width=\textwidth]{./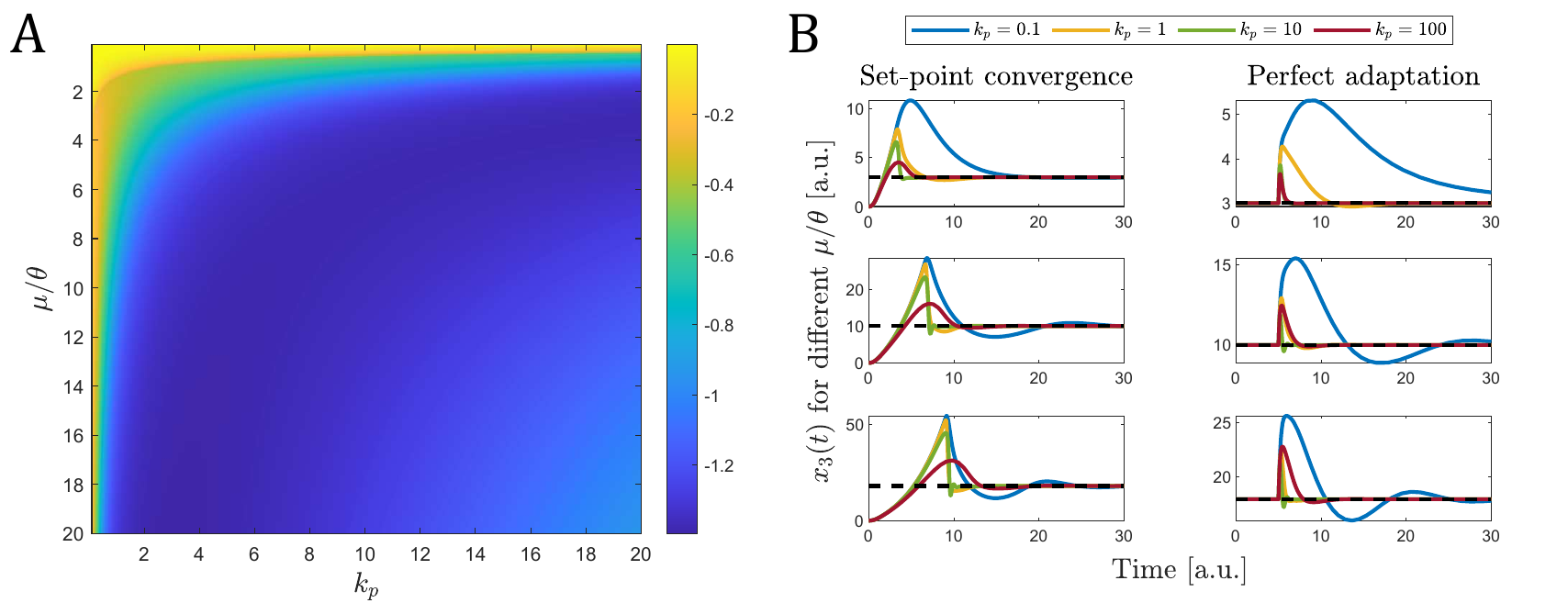}
%
\caption{\textbf{A.} Spectral abscissa (i.e. real-part of the rightmost eigenvalue) of the system associated with the \eqref{main:eq:RN:maturation}, \eqref{main:eq:AIC:niAICoi} with the parameters $\gamma_1=1$, $\gamma_2=\gamma_3=2$, $k_{21}=1$, $k_{32}=2$, $k_0=10$, $k_{13}=3$, $\nu=0$, $\eta=100/k_p$, and $\theta=1$ and for various values for $\mu$ and $k_p$. We have that $g_0=-10$, $g_n=-1$, and calculations show that the spectral abscissa of $A$ is $0.2188$ while the spectral abscissa of the closed-loop system may reach smaller values, which indicates that this controller is able to stabilize the network and improve the convergence properties of the system near the equilibrium point. \textbf{B.} Time domain evolution of the concentrations of $\X{3}$ for various values for the set-point $\mu/\theta$ and controller gains $k_p$. The left column depicts simulation results for zero initial conditions (convergence properties) whereas the right column depicts the response of the closed-loop network when the parameter $k_{32}$ changes from 2 to 3 at $t=5$ (perfect adaptation property).}\label{fig:persistent:1}
\end{figure}

\begin{figure}[H]
  \centering
    \includegraphics[width=\textwidth]{./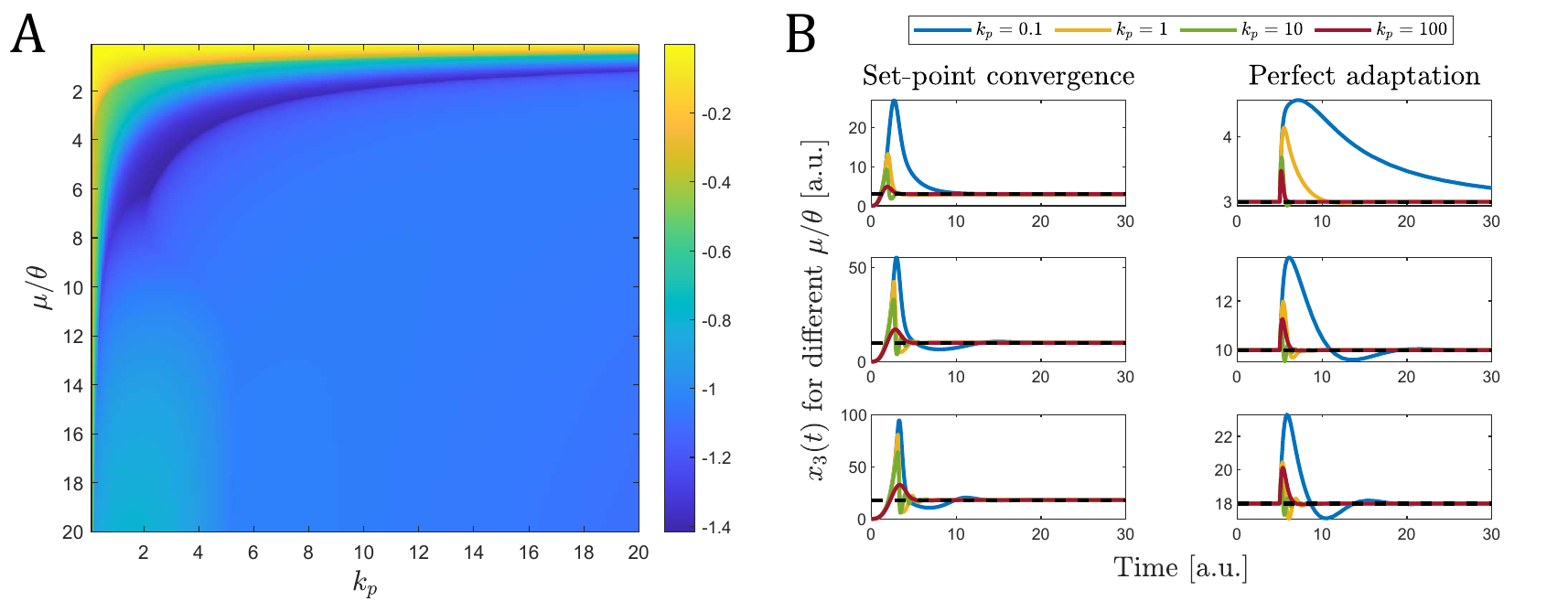}
%
\caption{\textbf{A.} Spectral abscissa (i.e. real-part of the rightmost eigenvalue) of the system associated with the \eqref{main:eq:RN:maturation}, \eqref{main:eq:AIC:niAICoi} with the parameters $\gamma_1=1$, $\gamma_2=\gamma_3=2$, $k_{21}=1$, $k_{32}=2$, $k_0=10$, $k_{13}=1$, $\nu=3$, $\eta=100/k_p$, and $\theta=1$ and for various values for $\mu$ and $k_p$. We have that $g_0=-5$, $g_n=-1/2$, and calculations show that the spectral abscissa of $A$ is $1.2695$ while the spectral abscissa of the closed-loop system may reach smaller values, which indicates that this controller is able to stabilize the network and improve the convergence properties of the system near the equilibrium point. \textbf{B.} Time domain evolution of the concentrations of $\X{3}$ for various values for the set-point $\mu/\theta$ and controller gains $k_p$. The left column depicts simulation results for zero initial conditions (convergence properties) whereas the right column depicts the response of the closed-loop network when the parameter $k_{32}$ changes from 2 to 3 at $t=5$ (perfect adaptation property).}\label{fig:persistent:2}
\end{figure}

\subsection*{How restrictive are the stability conditions on the unimolecular networks?}
%
%

\blue{A natural question is how strong the assumptions are on the considered class of unimolecular networks. As shown in the theoretical results and simulations above, imposing that \( A \) is Hurwitz stable is clearly a restrictive assumption, which was later relaxed to considering matrices \( A \) that are output-unstable. Therefore, the next question is, how restrictive is this assumption?}\\

\blue{As proved in Section \ref{sec:uni:niAIC:stable} and Section \ref{sec:uni:niAIC:unstable}, structural stability with respect to the controller gain \( k_p \) requires that all non-actuated species \( X_2, \ldots, X_n \) be stable. Nonnegative equilibrium points as well as an unstable behavior for the controlled network may also result from this assumption being not satisfied. The necessity of the natural stability of the non-actuated species can be easily explained by noting that the actuation affects only the degradation of the controlled species, while the controlled species can only act as an activator for the other species. This means that if any of the non-actuated species is unstable, there is no way to stabilize it through the degradation of the controlled species alone.}\\

%
%
%
%

\blue{We consider now two simple case scenarios that illustrate the previously evoked technical problems. The first one is that of a unimolecular network described by the matrices
\begin{equation}
  A=\begin{bmatrix}
    1 & 0\\1 & -1
  \end{bmatrix},\ b_0=\begin{bmatrix}
    1\\0
  \end{bmatrix}.
    \end{equation}
This matrix is neither Hurwitz nor output-unstable as the first molecular species is unstable. The equilibrium concentrations for the species of the network controlled using an niAIC with output-inhibition is given by
\begin{equation}
 x^*=-(A-k_pz_2^*e_ne_n^T)^{-1}b_0=\begin{bmatrix}
    -1\\
-\dfrac{1}{k_pz_2^* + 1}
  \end{bmatrix}
\end{equation}
and unavoidably contains a negative equilibrium for the species $\X{1}$, which is not admissible. This means that the controlled network will necessarily be unstable.}\\

\blue{For the second scenario, we slightly change $b_0$ to $b_0=\begin{bmatrix}
  0\\1
\end{bmatrix}$, which yields the (now admissible) equilibrium point
\begin{equation}
    x^*=\begin{bmatrix}
    0\\
    \dfrac{1}{k_pz_2^* + 1}
  \end{bmatrix}.
\end{equation}
Setting then $z_2^*:=\dfrac{1}{k_p(1/r-1)}$ yields $x_2^*=r$, $z_1^*=\mu/(\eta k_pz_2^*)$, and the Jacobian matrix
\begin{equation}
    J=\begin{bmatrix}
  1 & 0 & 0 & 0\\
  1 & -\dfrac{1}{r} & 0 & -k_pr\\
  0 & 0 & -\eta\left(\dfrac{1}{r}-1\right) & \dfrac{k_p\mu r}{r-1}\\
  0 & \theta  & -\eta\left(\dfrac{1}{r}-1\right) & \dfrac{k_p\mu r}{r-1}
  \end{bmatrix},
\end{equation}
which is not Hurwitz stable because of the presence of an eigenvalue equal to one. This proves that the unique equilibrium point of the controlled network is unstable in this situation. }\\

\blue{Those two examples illustrate the two problems that may arise in whenever the matrix $A$ is neither Hurwitz nor output-unstable: absence of a nonnegative equilibrium point or instability of the nonnegative equilibrium point.}

\subsection*{Extension to the  aiAIRC with output inhibition.}

We may now ask the question of how this translates to the aiAIRC controller with output inhibition, as depicted in Figure \ref{main:AIRC:AIRC}. This is, in fact, true and stated in the following result:
\begin{theorem}
  Assume that $A$ is Metzler, output unstable, and nonsingular and that $g_0\ne 0$. Then, the unique equilibrium point of the system \eqref{main:eq:mainsystCL} is locally exponentially stable for all $\eta,k_p,\mu,\theta>0$ and all $k_i\in[0,\bar k_i)$ for some $\bar k_i>0$.
\end{theorem}

\blue{This result indicates that, within the context of the aiAIRC, the structural stability of the controlled network is preserved with respect to the parameters \( \eta, k_p, \mu, \theta > 0 \). However, the result does not address structural stability concerning the gain \( k_i \) of the naAIC arm of the aiAIRC. At first glance, this may seem conservative, as it is possible that the network is also structurally stable with respect to \( k_i > 0 \) in certain cases. This issue lies beyond the scope of this paper because the mathematical tools developed in the SI do not apply here due to technical reasons that we will not discuss here. This exciting open problem is left for future research.}\\

\blue{We can, however, illustrate the above result and our claim about the structural stability of the network with respect to \( k_i \) through numerical simulations depicted in Figure~\ref{fig:AIRCstructural}. Consistent with the developed theory, the first panel shows that for the controlled network to be stable when \( r > g_0 \), it is necessary for \( k_i \) to be positive. This implies that the naAIC component of the aiAIRC is essential for the stability of the controlled network, aligning with previous works on the topic \cite{Briat:15e,Briat:19:Logistic}. Similarly, the third panel demonstrates that when \( r < g_0 \), a positive \( k_p \) is required for stability, indicating that the niAIC component is crucial. This finding is consistent with the results previously discussed in this paper.}\\

\blue{In the special case when \( r = g_0 \), both arms of the aiAIRC are essential to ensure the stability of the controlled network. This scenario is particularly interesting because this equilibrium point is unstable for both the naAIC and niAIC controller networks individually. Stability can only be achieved by combining both controllers into the unified aiAIRC structure.}\\

\blue{In all these scenarios, we observe that the controlled network remains stable for values of both \( k_p \) and \( k_i \) that are significantly greater than zero. Although not displayed in Figure~\ref{fig:AIRCstructural}, this stability persists even for much larger parameter values. This suggests that the controlled network is structurally stable with respect to the controller parameters \( k_p \) and \( k_i \).}

\begin{figure}[H]
  \centering
  \includegraphics[width=0.45\textwidth]{./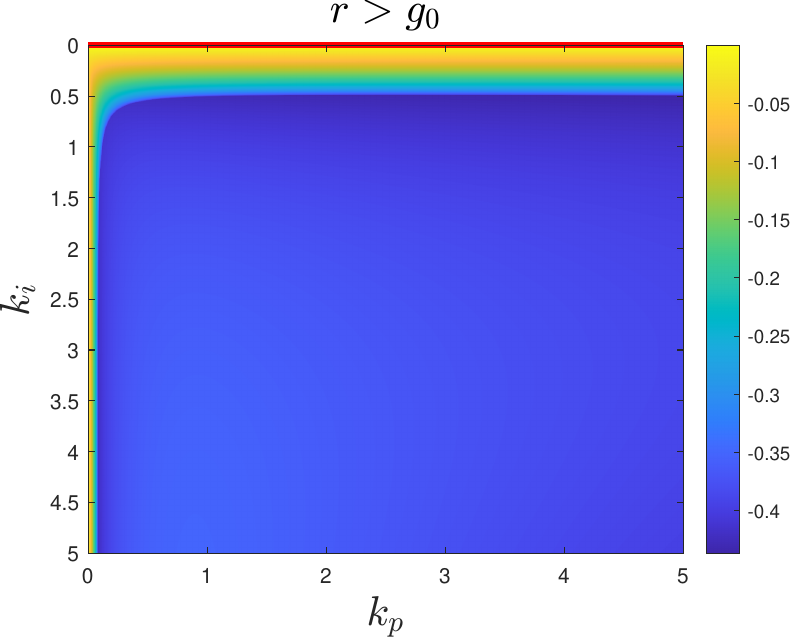}\includegraphics[width=0.45\textwidth]{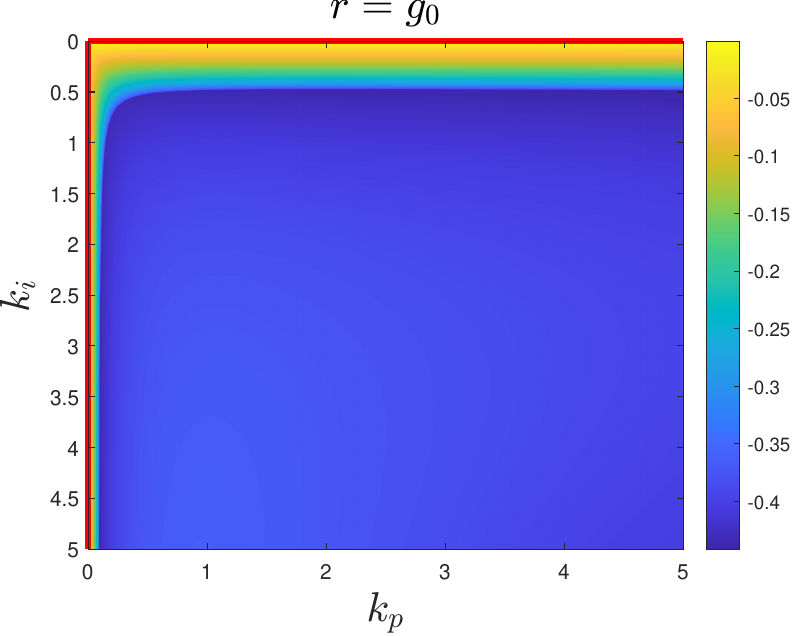}\\
  \includegraphics[width=0.45\textwidth]{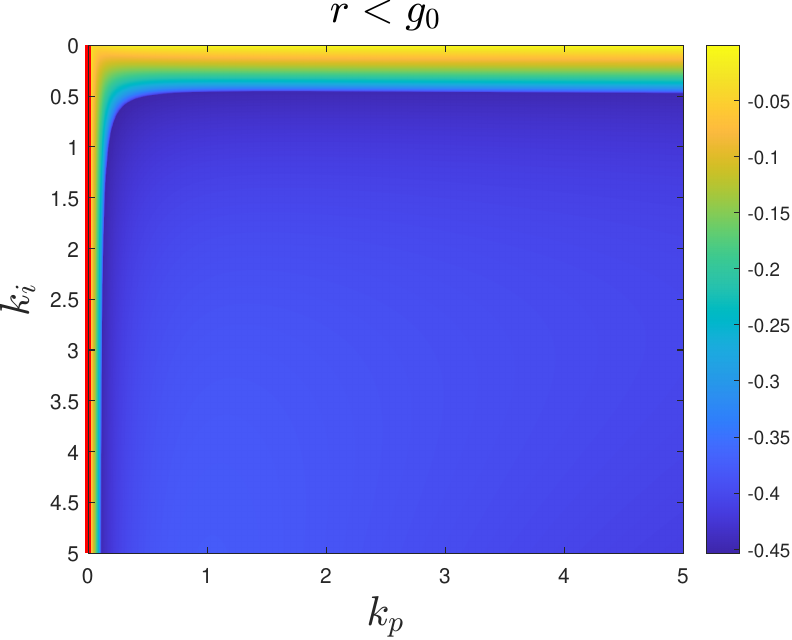}
\caption{\blue{Stability regions in the \((k_p, k_i)\)-plane for the unique equilibrium point of system \eqref{main:eq:mainsystCL}, associated with the gene expression network with protein maturation  \eqref{main:eq:RN:maturation} controlled by the aiAIRC with output inhibition \eqref{main:eq:AIC:aiAIRCoi}. Common parameters for all simulation panels are: \(\gamma_1 = 1\); \(\gamma_2 = \gamma_3 = 2\); \(k_{21} = 1\); \(k_{32} = 1\); \(k_{13} = 0\); \(\nu = 0\); \(\eta = 100\); \(\theta = 1\); and \(\mu = 2.5\). In the first panel, \(k_0 = 5\) (which gives \(g_0 = 1.25\)), so \(r > g_0\); in the second panel, \(k_0 = 10\) (yielding \(g_0 = 2.5\)), so \(r = g_0\); and in the third panel, \(k_0 = 15\) (resulting in \(g_0 = 3.75\)), so \(r < g_0\). The red regions along the boundary of the nonnegative orthant represent areas where the system either lacks a nonnegative equilibrium point (i.e., the set-point is not admissible) or where the unique nonnegative equilibrium point exists but is unstable.}}\label{fig:AIRCstructural}
\end{figure}

\subsection*{General networks}

We consider now the general case consisting of the interconnection of the network \eqref{main:eq:RN} and the niAIC with output inhibition \eqref{main:eq:AIC:niAICoi}, which can be described by the dynamical model
\begin{equation}\label{main:eq:NLCL}
\begin{array}{rcl}
  \dot{x}(t)&=&f(x(t))-e_nk_pz_2(t)x_n(t)+b_0\\
   \dot{z}_1(t)&=&\mu-\eta k_pz_1(t)z_2(t)\\
   \dot{z}_2(t)&=&\theta e_n^Tx-\eta k_pz_1(t)z_2(t),
\end{array}
\end{equation}
where the function $f(x)$ is sufficiently regular so that solutions to this model exist. In the current setting, this function will contain mass-action, Hill, or Michaelis-Menten terms.\\

Due to the nonlinear nature of the network, there are a number of additional difficulties over the unimolecular/linear case that need to be addressed. We briefly explain them here and refer the readers to the SI for more details. The first difficulty concerns the analytical calculation of the admissibility set and, while it may still be possible to characterize it analytically, numerical methods will be needed to compute the equilibrium solutions to the above system in most cases. So, one cannot expect to have a general, clean expression for the admissibility set in this case. The second difficulty is that there may be multiple equilibrium points $(x^*(r),z_1^*(r),z_2^*(r))$ associated with a given admissible set-point value $r$. To avoid complications, we assume here that this is not the case and that the set-point $r$ uniquely defines the equilibrium point $(x^*(r),z_1^*(r),z_2^*(r))$. The third difficulty lies at the level of the static input-output function  $F:u\mapsto y$, which maps input values to equilibrium output values. This function is much more difficult to explicitly characterize than in the unimolecular/linear case and one has to rely on its implicit definition given by the set of algebraic equations $f(x)-e_nuy+b_0=0$, $y=x_n$, and $y=F(u)$. In fact, a solution to this system is not even guaranteed to exist and additional conditions on the function $f$ are required to ensure that this is the case. Finally, the last difficulty is that the function $F$ needs to be locally decreasing at the equilibrium points of interests for them to be potentially stable. This corresponds to having a "negative gain", meaning that increasing the input decreases the output of the system.\\

Assuming that the above considerations have been adequately addressed and that the corresponding conditions/assumptions are satisfied, the stability of the equilibrium point associated with the set-point $r$  can then be established through the analysis of the linearized dynamics
\begin{equation}\label{main:eq:mainsystCL:nonlinear}
  \begin{bmatrix}
    \dot{\tilde{x}}(t)\\
    \dot{\tilde{z}}_1(t)\\
    \dot{\tilde{z}}_2(t)
  \end{bmatrix}=\begin{bmatrix}
    J^*(r)-e_n  e_n ^Tu_*(r) & 0& -e_n  k_pr\\
    0 & -\eta u_*(r) & -k_p\mu/u_*(r)\\
    \theta e_n ^T & -\eta u_*(r) & -k_p\mu/u_*(r)
  \end{bmatrix}\begin{bmatrix}
    \tilde{x}(t)\\
    \tilde{z}_1(t)\\
    \tilde{z}_2(t)
  \end{bmatrix}
\end{equation}
where $J^*(r)$  is the Jacobian matrix of the system, defined as $\textstyle J^*(r):=\left.\frac{\partial f(x)}{\partial x}\right|_{x=x^*(r)}$, and $u_*(r)$ is such that $r=F(u_*(r))$. The matrix $J^*(r)$ is the nonlinear analogue of the $A$ matrix in \eqref{main:eq:mainsystCL} and \eqref{main:eq:mainsystCL2:linear}, with the striking difference that the local dynamics is now influenced by the set-point. In this regard, different properties for the system are expected depending on our choice for the set-point value $r$. From the linear dynamics \eqref{main:eq:mainsystCL:nonlinear}, we can define the local transfer function for the nonlinear network
  \begin{equation}
  H_n(s,r)=e_n^T(sI-(J^*(r)-u_*(r)e_ne_n^T))^{-1}e_n,
\end{equation}
which plays an analogous role as in the linear/unimolecular case, with the difference now that the transfer function is set-point-dependent. This transfer function describes the behavior of the system about the corresponding equilibrium point. Interestingly, the DC-gain $H(0,r)$ is related to the static-output map $F$ as $\textstyle H(0,r)=-\left.\frac{dF(u)}{du}\right|_{u=u_*(r)}$ meaning that the DC-gain is positive if and only if the input-output map $F$ is decreasing at $u_*(r)$.\\

With the above discussion in mind, we are now in position to state the main result of this section on general nonlinear networks:
\begin{theorem}\label{main:th:generalNL}
Suppose that the assumptions and conditions discussed above and in the SI are satisfied, and assume further that
\begin{enumerate}[(a)]
  \item the set-point $r$ is admissible,
  \item the equilibrium $(x^*(r),u_*(r))$ is uniquely defined by the set-point $r$,
  \item $H_n(0,r)>0$, and
  \item one of the following equivalent statements holds:
  \begin{enumerate}[({d}1)]
    \item There exist a symmetric positive definite matrix $P_1(r)$ and a scalar $\eps>0$ such that
    \begin{equation}\label{main:eq:LMI:NL}
        (J^*(r)-F^{-1}(r)e_ne_n^T)^TP(r)+P(r)(J^*(r)-F^{-1}(r)e_ne_n^T)+2\eps e_ne_n^T
    \end{equation}
    is negative definite where
    \begin{equation}\label{eq:Pr}
        P(r) = \begin{bmatrix}
          P_1(r) & 0\\
          0 & 1
        \end{bmatrix}.
    \end{equation}
    \item There exists a scalar$\eps>0$ such that the system $(J_{11}^*(r), J_{12}^*(r), -J_{21}^*(r), u_*(r)-J_{22}^*(r)-\eps)$ is strictly positive real.
  \end{enumerate}
\end{enumerate}
  Then, the unique equilibrium point $(x^*(r),z_1^*(r),z_2^*(r))$ of the system \eqref{main:eq:NLCL} is locally exponentially stable for all $\eta,k_p>0$.
\end{theorem}

Due to the generality of the problem, it is difficult to provide more explicit conditions than those ones and we provide below a computational procedure to verify them. The admissibility of a set-point (i.e. condition (a)) as well as the uniqueness of the associated equilibrium point (i.e. condition (b)) can be both established by numerically solving the expressions $f(x)-e_nk_prz_2+b_0=0$ and $x_n=r$ for $(x,z_2)$. If there is no nonnegative solution, then the set-point is not admissible, and if there are multiple nonnegative solutions, then the uniqueness of the equilibrium point does not hold. This provides a way to check the two first conditions. The Jacobian matrix $J(x)$ can be computed using symbolic calculations, from which a simple evaluation at $x=x^*(r)$ would yield $J^*(r)$. The transfer function $H(s,r)$ can be easily evaluated once $J^*(r)$ and $u_*(r)$ are known in order to check condition (c). Finally, condition (d1) is a so-called Linear Matrix Inequality (LMI) problem, a convex decision problem, which can be solved using freely available solvers \cite{Sturm:01a,Tutuncu:03}. The condition (d2) can be numerically verified by computing the poles and the zeros of the associated transfer function and checking whether the Nyquist diagram remains in the open right half-plane according to the definitions and results stated in Box 2.\\


We illustrate the above result by considering the network
  \begin{equation}\label{main:eq:RN:NL}
  \begin{matrix}
    \phib&\rarrow{k_0}&\X{1} & \X{1}&\rarrow{k_{21}}&\X{1}+\X{2} & \X{i}&\rarrow{\gamma}\phib,i=1,2\\
    \X{2}&\rarrow{h(x)}&\X{2}+\X{1}& \X{2}&\rarrow{u}&\phib
  \end{matrix}
  \end{equation}
  where the last reaction is here to represent the actuation reaction with input $u$. All reactions are mass-action with positive rates except for the catalytic reaction $\X{2}\rarrow{h(x)}\X{2}+\X{1}$ which has a propensity function given by $h(x)=k_{12}/(1+x_2)$. Those parameters are all in the set of fixed parameters $\mathcal{P}_f$. The goal here is to control the second species, that is, $\Yz=\X{2}$. As a result, the model of this system is given by
  \begin{equation}\label{main:eq:ex:feedback}
    \begin{array}{rcl}
              \dot{x}_1&=&-\gamma x_1+\dfrac{k_{12}}{1+x_2}+k_0,\\
              \dot{x}_2&=&k_{21}x_1-\gamma x_2-ux_2
    \end{array}
  \end{equation}
  where $x_1$ and $x_2$ denote the mRNA and protein concentrations, respectively. It turns out that the admissibility set can be explicitly computed and is given by $(0,r_\mn)$ where $r_{\mn}$ is the unique positive root of the polynomial
\begin{equation}
  \gamma^2r^2+r(\gamma^2-k_{21}k_0)-k_{21}(k_{12}+k_0).
\end{equation}
This also shows that the equilibrium point is unique. The Jacobian of this system is given by
  \begin{equation}
    J(x)=\begin{bmatrix}
        -\gamma & -\dfrac{k_{12}}{(1+x_2)^2}\\
      k_{21} & -\gamma
    \end{bmatrix}
  \end{equation}
  and is invertible for all $x_2\ge0$, and so is $J(x)-ue_ne_n^T$ for all $x\ge0$ and $u\ge0$. It can be shown that the function $F$ is monotonically decreasing, hence invertible.  Therefore, we have that $F(0)=r_{\mn}$ and $\lim_{u\to\infty}F(u)=0$. More details can be found in the SI.\\

The Jacobian evaluated at the equilibrium point corresponding to $x_2^*=r$ is given by
  \begin{equation*}
    J^*(r)=\begin{bmatrix}
      -\gamma & -\dfrac{k_{12}}{(1+r)^2}\\
      k_{21} & -\gamma
    \end{bmatrix}
  \end{equation*}
  and the associated transfer function is $$H_n(s,r)=\dfrac{s+\gamma}{s^2+(2\gamma+u_*(r))s+\gamma(\gamma+u_*(r))+k_{21}k_{12}/(1+r)^2}.$$ From the Routh-Hurwitz criterion, the transfer function $H_n(s,r)$ is stable, has stable zeros and verifies $H(0,r)>0$. Therefore, we just need to show the existence of a matrix $P(r)$ as defined in Theorem \ref{main:th:generalNL} such that
  \begin{equation*}
    \begin{bmatrix}
    -2\gamma P(r) & -\dfrac{k_{12}}{(1+r)^2}P(r)+k_{21}\\
    -\dfrac{k_{12}}{(1+r)^2}P(r)+k_{21} & -2(\gamma+u_*(r))
  \end{bmatrix}
  \end{equation*}
  is negative definite. Alternatively, we may check whether the transfer function associated with the system
\begin{equation}
  \begin{array}{rcl}
    \dot{v}&=&-\gamma v-\dfrac{k_{12}}{(1+r)^2}w\\
    z&=&-k_{21}v+(\gamma+u_*(r))w
  \end{array}
\end{equation}
is strictly positive real. It can be seen that this is the case since it is a first order stable transfer function of relative degree 0 with stable zeros, with positive gain and positive feedthrough. Therefore, the closed-loop system consisting of \eqref{main:eq:RN:NL} and the niAIC with output inhibition \eqref{main:eq:AIC:niAICoi} is stable for all $k_p,\eta>0$ and $r>r_{\mn}$.\\

\begin{figure}[H]
  \centering
\includegraphics[width=\textwidth]{./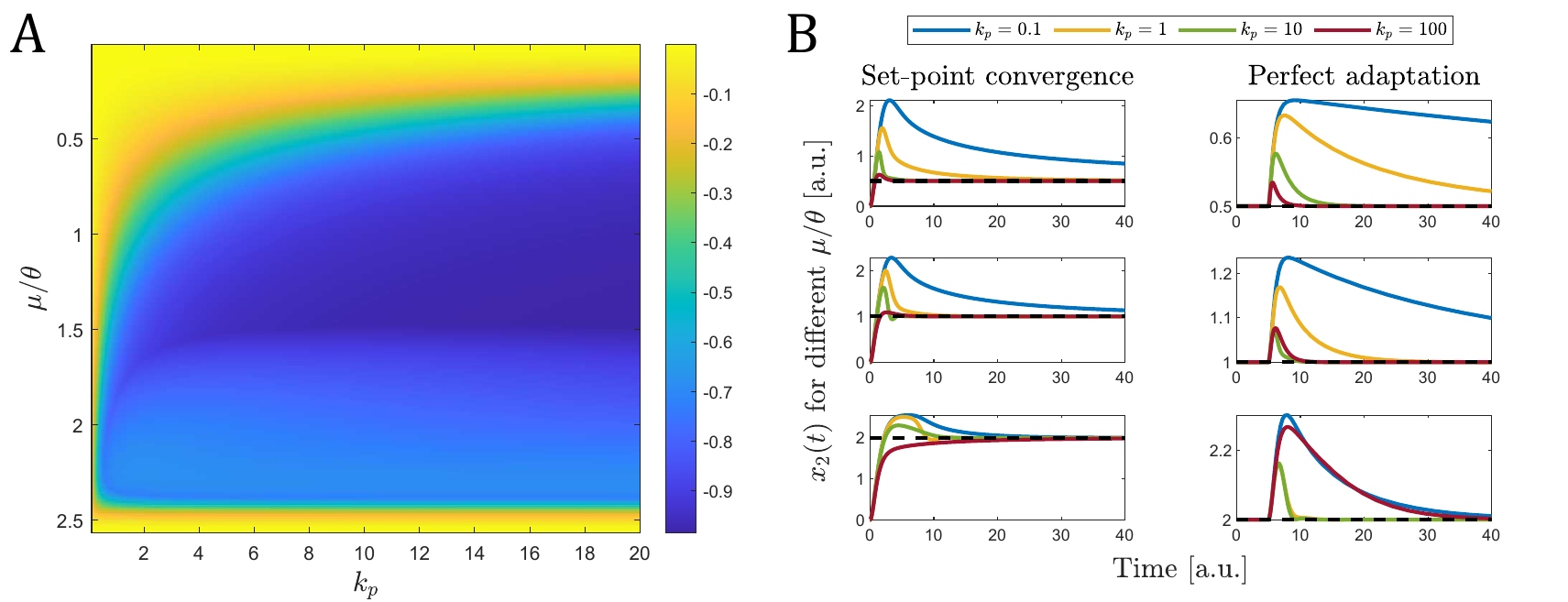}
\caption{\textbf{A.} Spectral abscissa (i.e. real-part of the rightmost eigenvalue) of the system associated with the network \eqref{main:eq:RN:maturation}-\eqref{main:eq:AIC:niAICoi} with the parameters $\gamma_1=1$, $k_{12}=1$, $k_0=1$, $k_{21}=2$, $\eta=100/k_p$, and $\theta=1$ and for various values for $\mu$ and $k_p$. We have that $r_{\mn}=2.5616$. \textbf{B.} Time domain evolution of the concentrations of $\X{2}$ for various values for the set-point $\mu/\theta$ and controller gains $k_p$. The left column depicts simulation results for zero initial conditions (convergence properties) whereas the right column depicts the response of the closed-loop network when the parameter $k_{12}$ changes from 1 to 2 at $t=5$ (perfect adaptation property).}\label{fig:nonlinear:1}
\end{figure}

This simple example shows how the conditions of Theorem \ref{main:th:generalNL} can be checked and emphasizes how difficult this verification could get when the dimension of the problem grows. However, some of those conditions dramatically simplify when considering specific subclasses of networks. This is discussed in the following sections.

\subsection*{The niAIC with output inhibition structurally stabilizes stable cooperative networks under some set-point admissibility condition}

The class of cooperative networks is a class of networks for which the Jacobian matrix $J^*(r)$ is Metzler for some admissible set-points $r$, which means that the network locally behaves like a unimolecular network at that point. For all those points $r$ where this is the case, we say that the system is cooperative at $x^*(r)$. This leads us to the following result
\begin{proposition}\label{prop:cooperative}
  Let $r$ be admissible and assume that the system \eqref{main:eq:NLCL} is cooperative at the equilibrium value $x^*(r)$ , assumed to be unique, and that the matrix $J^*(r)$ is Hurwitz stable. Then, the unique equilibrium point $(x^*(r),z_1^*(r),z_2^*(r))$ of the cooperative system \eqref{main:eq:NLCL} is locally exponentially stable for all $\eta,k_p>0$.
\end{proposition}

As the class of cooperative networks is very similar to unimolecular ones, which are cooperative by construction, extensions to the case of output-unstable systems are rather immediate at the expense of additional notational burden stemming from the nonlinear nature of the network. For this reason, those results are omitted here. Another possible extension of the above result relies in the fact that it can be adapted to systems which are cooperative with respect to a different cone than the nonnegative orthant. Indeed, if one can find a diagonal matrix $S(r)$ with diagonal entries in $\{-1,1\}$ such that $S(r)J^*(r)S(r)$ is Metzler, then the above result also applies to that system.

\subsection*{The niAIC with output inhibition structurally stabilizes Michaelis-Menten networks under some set-point admissibility condition}

Another important class of nonlinear networks benefiting from a complexity reduction is the class of linear mass-action and Michaelis-Menten networks \cite{Alon:07}. Such networks  \eqref{main:eq:NLCL} are described by a function $f$ of the form
\begin{equation}\label{main:eq:MM:f}
  f(x)=Ax+N(x)
\end{equation}
where $N_i(x)=\sum_{j}\alpha_{ij}\dfrac{1}{1+x_j/\beta_{ij}}+\sum_{j}\gamma_{ij}\dfrac{x_j/\delta_{ij}}{1+x_j/\delta_{ij}}$, where $\alpha_{ij},\beta_{ij},\delta_{ij},\gamma_{ij},>0$ and $i=1,\ldots,n$ are all fixed parameters; i.e. they are all in $\mathcal{P}_f$.

This leads us to the following result:
\begin{theorem}\label{main:th:stability:MM}
 Let $r$ be admissible and assume that the controlled Michaelis-Menten network \eqref{main:eq:NLCL}-\eqref{main:eq:MM:f} has a unique equilibrium point $(x^*(r),z_1^*(r),z_2^*(r))$. Assume further that the graph of the network is strongly connected, that $b_0\ne0$, and that $H_n(0,r)>0$. Then, the unique equilibrium point $(x^*(r),z_1^*(r),z_2^*(r))$ of the Michaelis-Menten system  \eqref{main:eq:NLCL}-\eqref{main:eq:MM:f} is locally exponentially stable for all $\eta,k_p>0$.
\end{theorem}

Getting back to the network \eqref{main:eq:ex:feedback} considered previously,  we can observe that it falls into the category considered here. As a result, we can quite readily conclude on its stability properties right after having checked the set-point admissibility and the condition that $H(0,r)>0$ without the need for all the subsequent calculations.

\subsection*{Intein-based implementation}

We provide here a possible implementation of the niAIRC with output inhibition based on inteins \cite{Nanda:20,Wang:22}. Such molecules have been to be an effective tool in the design of synthetic circuits by allowing to implement synthetic networks capable of performing computations. Even though various AIC structures have already been discussed in \cite{Anastassov:23}, it seems that an intein-based implementation of the niAIC with output inhibition has not been addressed so far. The overall implementation, relying on a split protease connected with an linker containing an N-Intein, is depicted in Figure~\ref{fig:inteins}. \blue{Interestingly, our results demonstrate that the controller structure is intrinsically very robust and will function over a broad range of its implementation parameters -- if not all values. This makes its implementation much easier and reduces the importance of precisely identifying its parameters, unlike other types of controllers that lack structural stability properties. However, fine-tuning may still be necessary to adjust certain quantitative properties of the controlled network, such as its settling time or other time-domain features. This tuning must be done on a case-by-case basis, as these properties depend heavily on the characteristics of the network being controlled. In this regard, formulating general guidelines is difficult and are not considered here.}


\begin{figure}[H]
\centering
\includegraphics[width=0.5\textwidth]{./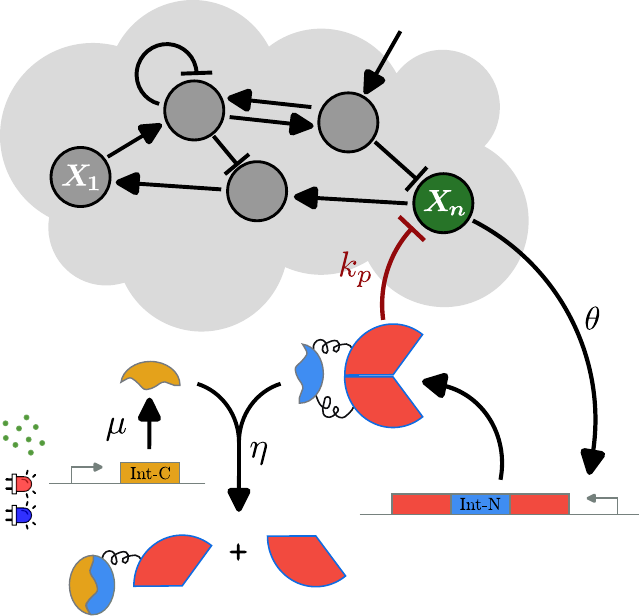}
\caption{Intein-based implementation of the niAIC with output inhibition: the species $\Z{2}$ is a split protease connected with a linker containing an N-Intein whereas the species $\Z{1}$ is a C-Intein, which is constitutively produced. When the two inteins bind, the protease is split in two inactive part. The output species $\X{n}$ acts here as a transcription factor for the split protease-intein compound. }\label{fig:inteins}
\end{figure}

\section{Discussion}


This paper reports practical results regarding the use of niAICs with output inhibition and the consideration of networks for which the output can be directly actuated. Those practical results are implications of novel, more fundamental, and more general results which are all proven and discussed in the SI. Due to their high level of  technicality, it seemed impractical to present them in the main text. Those results, which are based on ideas from dynamical systems theory, and systems and control theory, have intrinsic value on their own and are sufficiently general to be applicable to other, similar problems. For instance, they directly apply to other types of integral controllers such that as exponential integral controllers \cite{Shoval:11,Briat:16a,Briat:19:Logistic} and logistic integral controllers  \cite{Briat:19:Logistic}. Those results are all reported in the SI where very similar conclusions as for the niAIC are also drawn.\\

The entire set-up is based on the assumption that the output can be actively degraded. It is, therefore, natural to ask the question whether the same results hold true whenever an intermediary molecule is degraded instead. The general answer is "no": one cannot guarantee that the closed-loop network is stable independently of the gains of the controller since the associated linearized system will not satisfy the required positive realness properties. This fact proves that it is essential that the output species be directly inhibited for the structural stability results to hold. \blue{The paper assumes that the control paths from $\X{1}$ to $\X{n}$ and $\X{m}$ to $\X{n}$ have fixed activating or inhibiting roles that do not change over time. While this assumption is reasonable in the context of unimolecular mass-action systems, it may become restrictive in more complex, nonlinear networks, where a path might be activating in one region of the state space and inhibiting in another. This scenario is not addressed in the current work for several reasons. First, the focus of this paper is on output inhibition, which is inherently inhibitory by design, precluding any activation through this actuation path. Such behavior could only arise if the actuated species were different from the controlled species $\X{n}$. Second, the theoretical tools developed in the SI are not applicable to this case, except under very restrictive conditions. This limitation arises for two main reasons. From a technical perspective, the tools fail to provide meaningful predictions about the structural stability of the controlled network in some instances of this scenario. Furthermore, the current controller topology is not really suited to address such complexities, and alternative controller structures may be necessary. Identifying an appropriate topology for this situation is an open and intriguing problem, which we leave for future research.}


While the results obtained in this paper shed some light on the structural stability properties of networks controlled with an niAIC with output inhibition, those results do not provide any insights regarding how to properly choose the controller parameters. Such parameters could be numerically optimized so that certain performance properties are satisfied such that having a small spectral abscissa (which governs the rate of convergence near the equilibrium point) or reducing oscillations/overshoot, etc. Those performance criteria are typically difficult to optimize manually but could be easily done computationally using iterative methods.\\

The obtained results rely on linearization, a procedure that makes the analysis simpler but with the additional caveat that the results will only hold locally: the convergence of the state trajectories to the equilibrium point is only guaranteed provided that the initial conditions are "close enough" to the equilibrium point. Analytically quantifying how close they need to be is a difficult task in general and this analysis should be performed on a case-by-case basis. An important question is whether the state trajectories will converge to the equilibrium point regardless the initial condition, a property called global asymptotic stability. This is a difficult problem and the obtained results can neither be used nor easily extended to assess this. A possible workaround consists of constructing a Lyapunov function which would prove the global asymptotic stability of the equilibrium point of the closed-loop network. Moreover, proving the structural stability of the closed-loop network with respect to (some of) the controller parameters requires an alternative route than through the consideration of transfer functions and their positive realness properties. Certainly the most natural way to resolve this problem would be to consider the time-domain analogue of positive realness properties called passivity properties. In this case, the main tools at our disposal are storage functions and supply-rates, that may allow us to prove the passivity properties of the controller and the network, and the structural stability properties for the closed-loop network \cite{Brogliato:07,vanderSchaft:00}.\\

Some of the results obtained here for the niAIC with output inhibition have been also extended to the aiAIRC with output inhibition using a perturbation argument. This argument allows us to infer that the closed-loop network will maintain its structural stability property provided that the gain $k_i$ is "small enough". However, the current methodology is not only unable to quantify how small this gain should be but also fails to prove or disprove the fact that the closed-loop network remains stable for all set-points and all controller parameters, as extensive simulations tend to suggest. The main bottleneck is that the current approach heavily relies on the fact that the linearized dynamics depends linearly on the gain of the controller and that the equilibrium point depends continuously on it. Those properties can be ensured by suitably modifying the initial problem, a procedure that was introduced in \cite{Briat:15e,Briat:19:Logistic} in the context of the naAIC. Unfortunately, this procedure does not work anymore for the aiAIRC for the simple reason that this controller involves two gains, $k_p$ and $k_i$, which forbids the existence of a similar problem reformulation. Calculations notably show that the expressions for the  equilibrium point and the linearized dynamics are discontinuous functions on the gains, which invalidates the entire approach and demonstrates the need for a radically different approach.\\


An important limitation of the niAIC with output inhibition that needs to be discussed lies at the level of the set-point admissibility set. It was shown that in the case of unimolecular networks, all the admissible set-points lie below the basal expression level. This can be seen as a serious issue whenever the basal expression levels are low. A first possible workaround would be to increase the basal expression levels through overexpression of the corresponding genes. This would allow for a wider range of values for the set-point but at the same time would create a futile cycle having a high metabolic burden, especially when the set-point is much lower than the basal level. A second, perhaps riskier, strategy consists of destabilizing  the network, through the addition of a positive feedback loop or the addition of an autocatalytic reaction, which would place the network in a regime where all the set-points are admissible.\\

\begin{tcolorbox}[breakable,break at=.93\textheight]{Box 1. The Perfect Adaptation Problem and (Antithetic) Integral Control}\label{box:1}
\footnotesize

The objective of this separate section is to illustrate through simple examples the possibilities and limitations of two types of AICs, the naAIC and the niAIC with output inhibition, on the following simple gene expression network
\begin{equation}\label{main:eq:AIC:gene_exp}
\phib\rarrow{k_0}\X{1},\ \X{1}\rarrow{k_2}\X{1}+\X{2},\ \X{1}\rarrow{\gamma_1}\phib,\ \X{2}\rarrow{\gamma_2}\phib
  \end{equation}
  where $\X{1}$ and $\X{2}$ denote the mRNA and protein species, respectively. Above, the positive rate parameters $k_0,k_2,\gamma_1$, and $\gamma_2$ of the mass-action reactions correspond to the basal transcription rate, the translation rate, the mRNA degradation rate, and the protein degradation rate, respectively. We consider here the problem of ensuring perfect adaptation for the protein concentration. Therefore, $\X{2}$ is here both the controlled and the measured species. From the dynamical model associated with this network, the equilibrium basal level for the concentration of $\X{2}$ is computed to be  $g_0:=k_0k_2/(\gamma_1\gamma_2)$.\\


The original version of the AIC, as introduced in \cite{Briat:15e}, corresponds to the naAIC setup depicted in Figure~\ref{main:fig:AICtable}. A possible reaction network representation for this controller is given by
\begin{equation}\label{main:eq:AIC:naAIC}
  \phib\rarrow{\mu}\Z{1},\ \Yz\rarrow{\theta}\X{2}+\Z{2},\ \Z{1}+\Z{2}\rarrow{\eta}\phib,\  \Z{1}\rarrow{k}\Z{1}+\X{1},
\end{equation}
where $\Z{1}$ is both the actuating and set-point species, $\Z{2}$ is the sensing species, and $\X{1}$ is the actuated species. The parameters $\mu,\theta,\eta,k$ are all positive rate parameters of the reactions, which are all assumed to be mass-action. We note that this controller activates the production of $\X{1}$, which means that it can only increase the level of the controlled species $\X{2}$ over the basal level $g_0$. Calculations accordingly show that the set-point $r=\mu/\theta$ must be such that $r>g_0$ for the controlled network to be satisfy the perfect adaptation property stated in Definition \ref{main:def:RPA}. When this is not satisfied, one component of the state of the network grows without bound as depicted in Figure~\eqref{main:fig:nAIC}.B. However, when the stability conditions for the controlled network are met, we automatically have that $x_2(t)\to r$  as $t\to\infty$ demonstrating the perfect adaptation property for the controlled network; see Figure~\eqref{main:fig:nAIC}.A.\\

Consider now for the sake of comparison another type of AIC controller, the niAIC with output inhibition. A possible reaction network representation for this controller is given by
\begin{equation}\label{main:eq:AIC:niAIC}
  \phib\rarrow{\mu}\Z{1},\ \X{2}\rarrow{\theta}\X{2}+\Z{2},\ \Z{1}+\Z{2}\rarrow{\eta}\phib,\  \X{2}+\Z{2}\rarrow{k}\Z{2}
\end{equation}
where now $\Z{1}$ is the set-point species, $\Z{2}$ is both the sensing and actuating species, and $\X{2}$ is the measured/controlled/actuated species.  We can observe in this case that the actuating species $\Z{2}$ actively degrade the controlled species $\X{2}$, which indicates that this controller can only decrease the output with respect to the basal level $g_0$. In fact, calculation show exactly that it is necessary that the set-point $r$ be such that $r<g_0$ for the network to have stability and perfect adaptation properties. As for the naAIC, if this condition is not met, one component of the state of the network grows without bound as depicted in Figure~\eqref{main:fig:nAIC}.D. However, when the stability conditions for the controlled network are met, we automatically get that $x_2(t)\to r$  as $t\to\infty$ demonstrating the perfect adaptation property for the controlled network; see Figure~\eqref{main:fig:nAIC}.C.

\begin{center}
  \includegraphics[width=0.55\textwidth]{./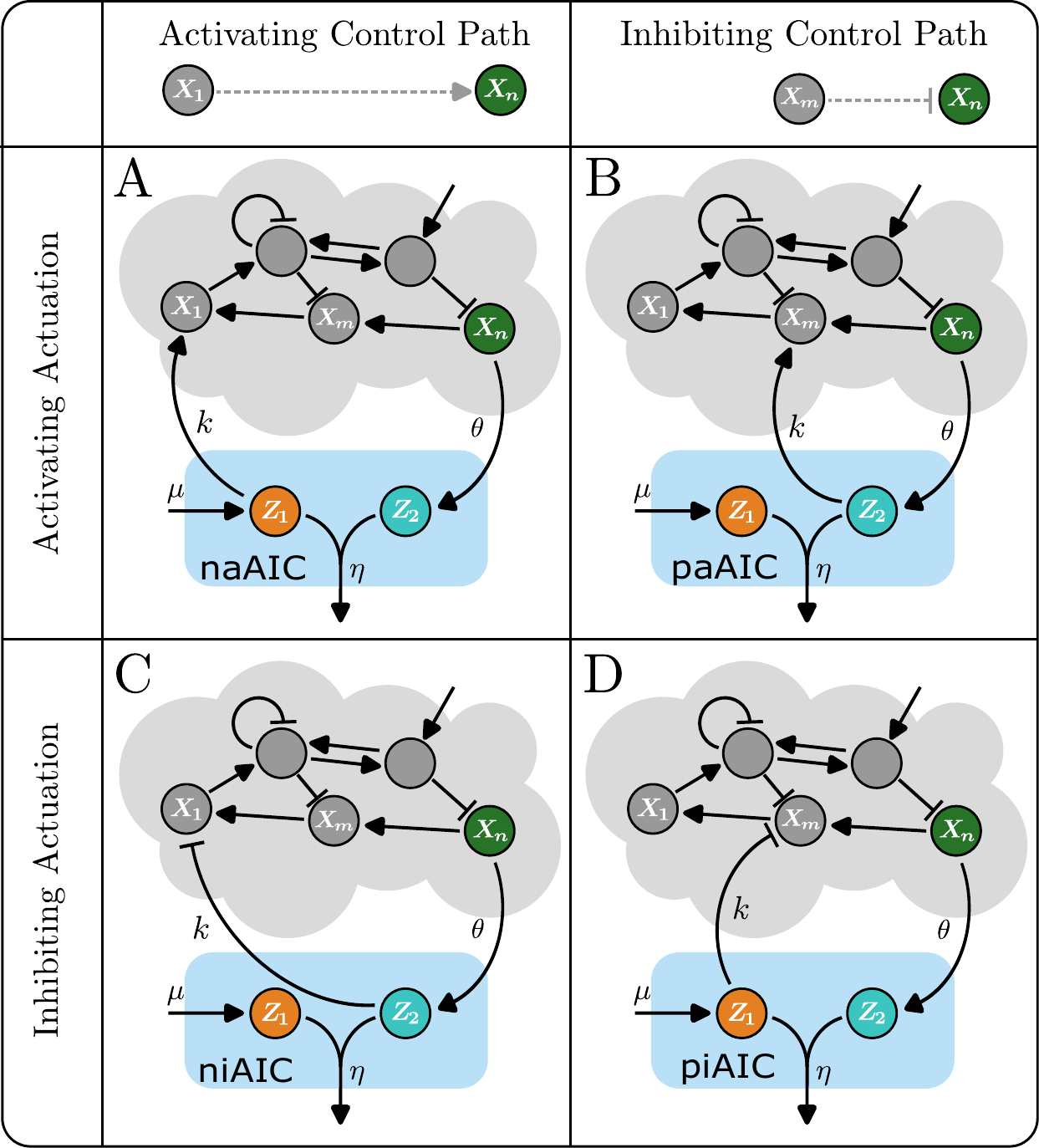}
  \captionof{figure}{\footnotesize
    The main objective of Antithetic Integral Controllers is to interact with a network in a way that makes one of its species, here $\X{n}$, exhibit the robust perfect adaptation property. The core mechanism of the AIC is the sequestration reaction that allows to perform a comparison operation between the levels of the two controller species $\Z{1}$ and $\Z{2}$. AICs admit different possible architectures depending on the role of its molecular species, which may act as activators or inhibitors. \blue{The naming convention for these controllers is as follows: "n" indicates that the loop is closed negatively by the controller (i.e., $\X{n}$ inhibits $\X{1}$ or $\X{m}$), "p" signifies that the loop is closed positively (i.e., $\X{n}$ activates $\X{1}$ or $\X{m}$), and "a" or "i" denotes whether the actuating species is an activator or a repressor. It is assumed that $\X{1}$ activates $\X{n}$ in the networks of the first column, whereas $\X{m}$ inhibits $\X{n}$ in those in the second column. Based on this, the naAIC bears its name because it activates $\X{1}$ using $\Z{1}$ and closes the loop negatively, since $\X{n}\regulationarrow[act]\Z{2}\regulationarrow[rep]\Z{1}\regulationarrow[act]\X{1}$, which can be reduced to $\X{n}\regulationarrow[rep]\X{1}$. Similarly, the piAIC bears its name because it inhibits $\X{m}$ using $\Z{1}$ and closes the loop positively, since $\X{n}\regulationarrow[act]\Z{2}\regulationarrow[rep]\Z{1}\regulationarrow[rep]\X{m}$, which can be reduced to $\X{n}\regulationarrow[act]\X{m}$.}
    }\label{main:fig:AICtable}
\end{center}

\begin{center}
\includegraphics[width=0.65\textwidth]{./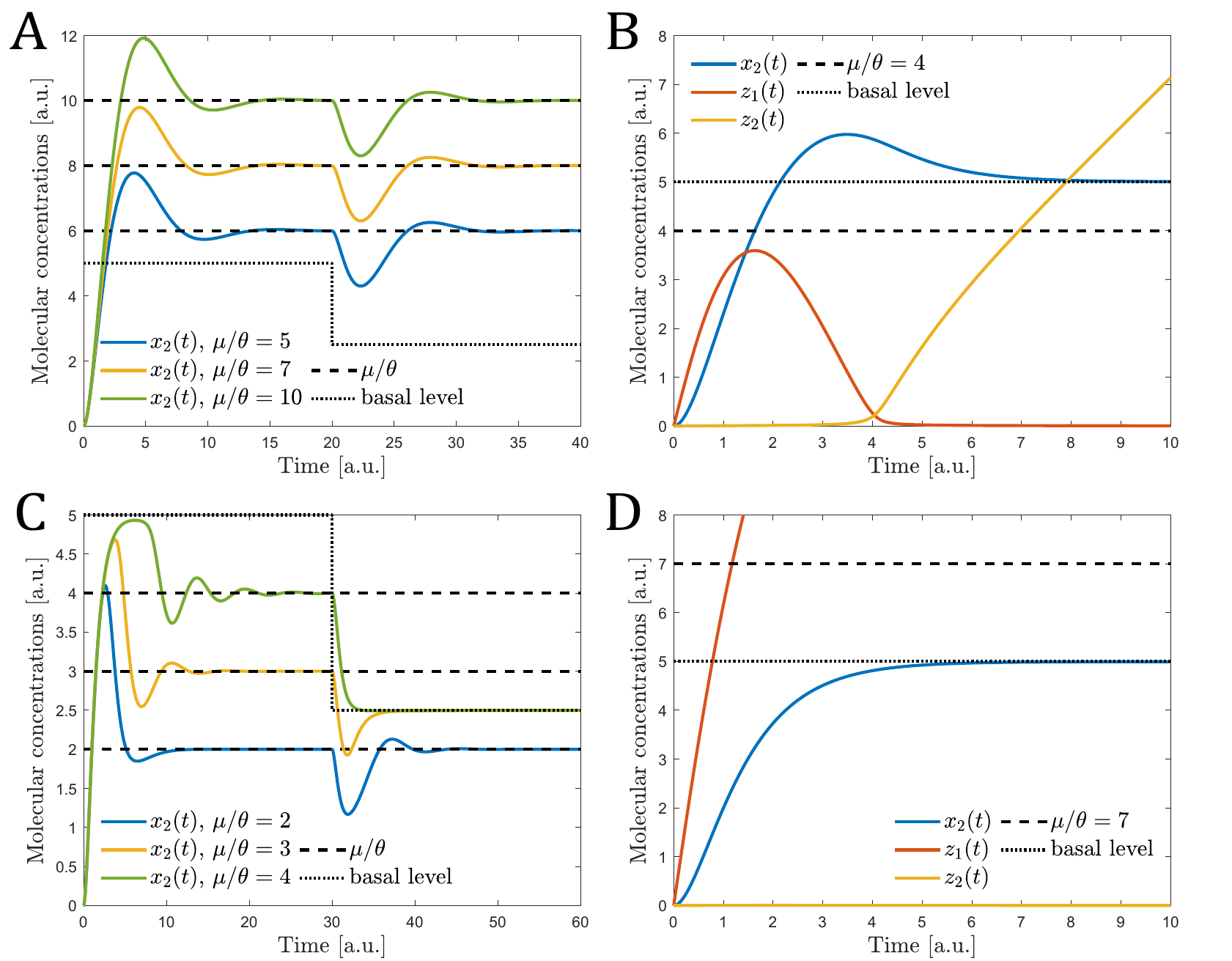}
\captionof{figure}{\footnotesize \textbf{A.} Simulation of the reaction network \eqref{main:eq:AIC:gene_exp}-\eqref{main:eq:AIC:naAIC} with the parameters $\gamma_1=1$, $\gamma_2=2$, $k_2=1$, $k_0=10$, $k=1$, $\eta=100$, $\theta=1$ and for various values for $\mu$. At time $t=20$, the value of $k_0$ is divided by 2. When the set point $\mu/\theta$ is set above the basal level, the controlled species $\X{2}$ exhibits the robust perfect adaptation property. \textbf{B}. When the set-point is set to a value lower than the basal level, the concentration of controller species $\Z{2}$ grows without bound and perfect adaptation is not achieved for $\X{2}$. \textbf{C.}  Simulation of the reaction network \eqref{main:eq:AIC:gene_exp}-\eqref{main:eq:AIC:niAIC} with the parameters $\gamma_1=1$, $\gamma_2=2$, $k_2=1$, $k_0=10$, $k=0.5$, $\eta=100$, $\theta=1$ and for various values for $\mu$. At time $t=30$, the value of $k_0$ is divided by 2. When the set point $\mu/\theta$ is set below the basal level, the controlled species $\X{2}$ exhibits the robust perfect adaptation property. \textbf{D.}  When the set-point is set to a value higher than the basal expression level, the concentration of the controller species $\Z{1}$ grows without bound and perfect adaptation is not achieved for $\X{2}$, which can only converge to its basal level.}\label{main:fig:nAIC}
\end{center}

\end{tcolorbox}

\begin{tcolorbox}[breakable,
break at=.93\textheight]{Box 2. Passivity analysis and the analysis of interconnections}\label{box:2}
\footnotesize
\ \\ \ \\
A very powerful tool for the analysis of feedback linear systems is called the Nyquist criterion, which is a frequency domain graphical criterion. Consider a linear system with transfer function $H_1(s)$ with poles with negative real part and such that $H_1(0)>0$. Consider further the transfer function $H_2(s)=k/s$ where $k>0$. We are now interested in studying the stability of the interconnection depicted in Figure \ref{main:fig:NyqInt}.B. The Nyquist criterion for stable systems provide a way to answer to this question:
\begin{theorem}[Nyquist stability criterion for stable systems]
The feedback interconnection depicted in Figure \ref{main:fig:NyqInt}.B with the previously defined transfer functions is stable if and only if $kH(j\omega)/j\omega$ does not encircle the point -1 in the complex plane as $\omega$ sweeps from $-\infty$ to $\infty$.
\end{theorem}

This criterion is very simple to use as one just needs to plot the Nyquist diagram using off-the-shelf computational tools and verify whether it encircles the critical point -1 or not. This can also be checked analytically by verifying that for all critical frequencies $\omega_c$ defined as $\arg(kH(j\omega_c)/j\omega_c)=-\pi$, the condition $|kH(j\omega_c)/j\omega_c|<1$ holds.\\

We now consider the question of establishing whether the interconnection is stable for all positive $k$'s. To this aim, first observe that if there exists a critical frequency $\omega_c$ as defined above, then one can select $k>k_c:=-(H(j\omega_c)/j\omega_c)^{-1}>0$ such that $kH(j\omega)/j\omega$ will encircle the point -1. Therefore, one can conclude that the stability of the interconnection holds for all $k>0$ if and only if there is no such critical frequency $\omega_c$. Noting that
\begin{equation}
  \arg(kH(j\omega)/j\omega)=\arg(H(j\omega))-\arg(j\omega)=\arg(H(j\omega))-\pi/2,
\end{equation}
we can conclude that there are no critical frequencies if and only if
\begin{equation}
  \arg(H(j\omega))\in(-\pi/2,\pi/2)
\end{equation}
for all $\omega\in\mathbb{R}$. This is equivalent to saying that $\Re[H(j\omega)]>0$ for all $\omega\in\mathbb{R}$ or that the Nyquist diagram is strictly included in the open right half-plane as illustrated in Figure~\ref{main:fig:NyqInt}.A. Interestingly, we can connect this property to the positive realness properties of transfer functions given below:
\begin{define}\label{main:def:PRSPR}
  The rational, proper function $H:\mathbb{C}\mapsto\mathbb{C}$ is said to be
  \begin{enumerate}[(1)]
  \item Weakly Strictly Positive Real (WSPR) if
    \begin{enumerate}[({\theenumi}a)]
      \item\label{main:st:SPR:1} all the poles of $H(s)$ are in the open left half-plane, and
      \item\label{main:st:SPR:2} $\Re[H(j\omega)]>0$ for all $\omega\in[0,\infty)$.
    \end{enumerate}
    \item Strictly Positive Real (SPR) if
    \begin{enumerate}[({\theenumi}a)]
      \item it is weakly strictly positive real, and
      \item\label{main:st:SPR:3} $H(\infty)>0$ or $\lim_{\omega\to \infty}\omega^2\Re[H(j\omega)]>0$.
    \end{enumerate}
\end{enumerate}
\end{define}

As previously said, checking those properties can be done computationally and graphically. Computing the poles of the transfer function as well as plotting the Nyquist diagram can be easily performed using off-the-shelf software. A difficulty is the possible existence of values for $\omega$ for which the diagram come very close to the imaginary axis, making it difficult to establish whether they intersect. To palliate this, the existence of intersections can be assessed by finding real solutions $\omega$ to the equation $\Re[H(j\omega)=0$, which amounts to numerically solving a polynomial for real solutions. Finally, the additional condition in the SPR property can be easily checked using the expression
\begin{equation}
\lim_{\omega\to \infty}\omega^2\Re[H(j\omega)]=\sum_{i=1}^{n_z} z_i-\sum_{i=1}^{n_p} p_j,\ \textnormal{whenever }n_p=n_z+1,
\end{equation}
and where the $z_i$'s and $p_j$'s are the zeros and the poles of the transfer function $H(s)$, respectively.

\begin{figure}[H]
  \centering
  \includegraphics[width=.4\textwidth]{./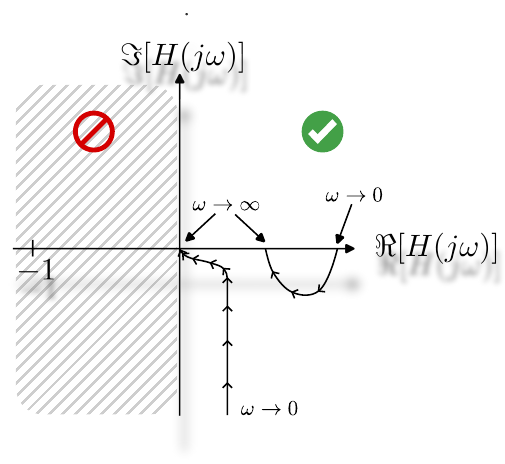}\includegraphics[width=.5\textwidth]{./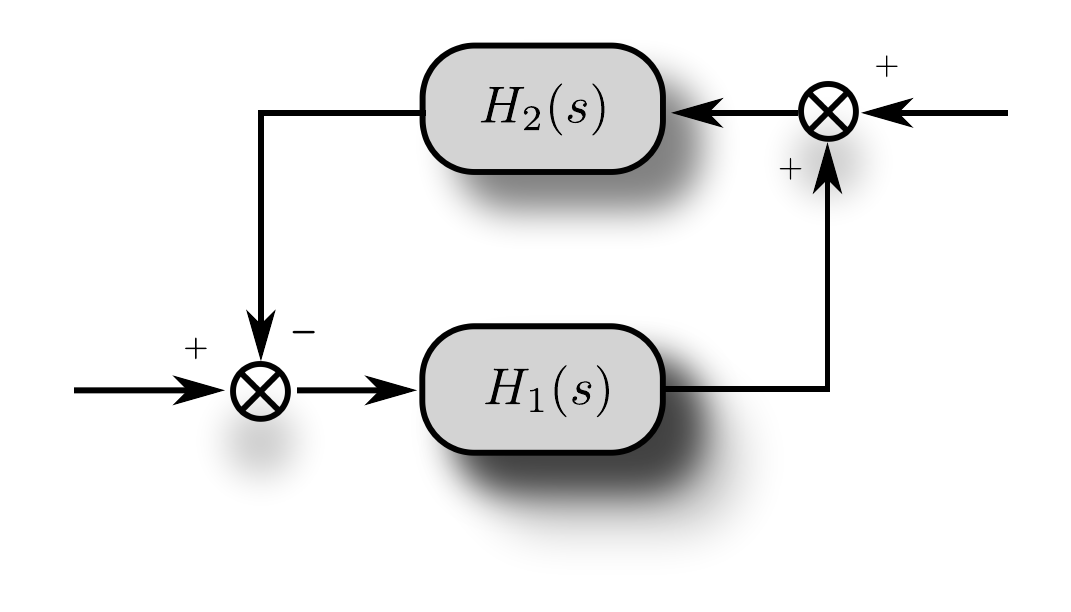}
  \caption{A. Nyquist Diagram. The Nyquist diagram of a transfer function $H(s)$ consists of representing the locus of $H(j\omega)$ in the complex plane as $\omega$ goes to zero to infinity, a direction indicated by the arrows on the diagram. For a transfer function to be positive real, it is necessary that this locus be included in the right half-plane. The Nyquist diagram of the transfer functions $(s+1)/(s(s+2))$ and $(s+1)/(s+2)$ is represented here. The first one is positive real whereas the second is strictly positive real. B. A typical interconnection of systems as considered in systems and control theory for the (structural) stability analysis of control systems.}\label{main:fig:NyqInt}
\end{figure}

\end{tcolorbox}

\section*{Author contributions}

M.K. and C.B. conceived the study, C.B. performed the research, C.B. performed the numerical simulations, M.K. supervised the research and secured the funding, C.B. and M.K. wrote the paper.

%

\section*{Declaration of interests}

The authors declare no competing interests.

\section*{Declaration of generative AI and AI-assisted technologies in the writing process}


During the preparation of this work the authors used ChatGPT in order to improve the readability and language of the manuscript. After using this tool/service, the authors reviewed and edited the content as needed and take full responsibility for the content of the published article.

\section*{Acknowledgments}

The authors gratefully thank Christian Cuba Samaniego, Maurice Filo, Jean-Baptiste Lugagne, Noah Olsman, and Yili Qian for fruitful discussions.


\setcounter{section}{0}
\renewcommand{\thesection}{S\arabic{section}}
\renewcommand{\thetheorem}{\thesection.\arabic{theorem}}
\renewcommand{\thefigure}{S\arabic{figure}}

\newpage

\begin{center}
{\Huge\textbf\newline{Supplementary Information}} 
\end{center}

\tableofcontents

\ \\
\noindent\textbf{Notations.} The cone of symmetric positive (semi)definite matrices of dimension $n$ is denoted by ($\mathbb{S}^n_{\succeq0}$) $\mathbb{S}^n_{\succ0}$. For two symmetric matrices $A,B$ with same dimension, $A\succ B$ ($A\succeq B$) means that $A-B\succ0$ ($A-B\succeq0$); i.e. $A-B$ is positive (semi)definite. The natural basis for $\mathbb{R}^n$ is denoted by $\{e_1,\ldots,e_n\}$.

\resumetoc
\section{Preliminary definitions and results}\label{sec:prel}

This section is devoted to introducing the main definitions and results that will prove instrumental in proving the main results of the paper.

\subsection{Results on (positive) linear systems}

Let us introduce the following Linear Time-Invariant (LTI) system
\begin{equation}\label{eq:LTI:positive}
  \begin{array}{rcl}
    \dot{x}(t)&=&Ax(t)+Bu(t), t\ge0\\
    y(t)&=&Cx(t)+Du(t)\\
    x(0)&=&x_0
  \end{array}
  \end{equation}
 where $A\in\mathbb{R}^{n\times n}$, $B\in\mathbb{R}^{n\times m}$, $C\in\mathbb{R}^{p\times n}$, and $D\in\mathbb{R}^{p\times m}$. The following definition introduces the concept of spectral abscissa:
\begin{define}
 The spectral abscissa $\alpha(A)$ of a matrix $A\in\mathbb{R}^{n\times n}$ is defined as
 \begin{equation}
   \alpha(A):=\max\left\{\Re(\lambda):\det(\lambda I-A)=0\right\}.
 \end{equation}
\end{define}

This concept is essential in the analysis of linear dynamical systems of the form \eqref{eq:LTI:positive} since it is immediate to see that this the 0-equilibrium point of that system is asymptotically stable if and only $\alpha(A)<0$, which is equivalent to saying that the spectrum of $A$ is included in the open left half-plane of the complex plane. The next result states conditions under which the system \eqref{eq:LTI:positive} is internally positive:
\begin{proposition}[\cite{Farina:00}]
The following statements are equivalent:
\begin{enumerate}[(a)]
\item The linear dynamical system \eqref{eq:LTI:positive} is internally positive; i.e. for all $x_0\ge0$ and all $u(t)\ge0$, $t\ge0$, we have that $x(t)\ge0,\ y(t)\ge0$, for all $t\ge0$.
\item The matrix $A$ is Metzler (i.e. all the off-diagonal entries are nonnegative) and the matrices $B,C,D$ are nonnegative (i.e. all the entries are nonnegative).
\end{enumerate}
\end{proposition}

\begin{proposition}[\cite{Berman:94,Farina:00,Kaszkurewicz:00,Kushel:19}]\label{prop:MHS}
  Let $A\in\mathbb{R}^{n\times n}$ be Metzler. Then, the following statements are equivalent:
  \begin{enumerate}[(a)]
    \item $A$ is Hurwitz stable (i.e. all its eigenvalues have negative real part or, equivalently, $\alpha(A)<0$),
    \item For all $q\in\mathbb{R}^n_{>0}$, there exists a vector $v\in\mathbb{R}^n_{>0}$ such that $v^TA=-q^T$, and
    \item There exists a diagonal matrix $D$ with positive diagonal such that $A^TD+DA\prec0$ (i.e. $A^TD+DA$ is negative definite).
    \item $A$ is invertible and $-A^{-1}$ is nonnegative.
  \end{enumerate}
\end{proposition}

\black{\begin{lemma}[\cite{Briat:15e}]\label{lemma:gains}
  Assume that $A$ is Metzler and Hurwitz stable, then the system $(A,e_i,e_j^T)$, $i,j\in\{1,\ldots,n\}$, with transfer function $H_{ij}(s)=e_j^T(sI-A)^{-1}e_i$, is output controllable if and only if one of the following statements holds:
  \begin{enumerate}
    \item The impulse response $h_{ij}(t)=e_j^Te^{At}e_j$ is not identically zero.
    \item The DC-gain $H_{ij}(0)=-e_j^TA^{-1}e_i$ is positive.
    \item The rank condition $\rank\begin{bmatrix}
      e_j^Te_i & e_j^TAe_i & e_j^TA^{n-1}e_i
    \end{bmatrix}=1$ is satisfied.
  \end{enumerate}
\end{lemma}
}


\begin{proposition}[\cite{Berman:94}]\label{prop:PF}
Assume that the matrix $M\in\mathbb{R}^{n\times n}$ is Metzler. Then, the following statements hold:
\begin{enumerate}[(a)]
  \item There exists an eigenvalue $\lambda_{\mathrm{PF}}(A)$ of $A$ that is equal to $\alpha(A)$, which is referred to as the Perron-Frobenius eigenvalue. Moreover, if the matrix $A$ is irreducible, then this eigenvalue is unique.
  \item Let $\tilde M$ be a Metzler matrix such that $M\le \tilde M$ (where the inequality is component-wise), then $\alpha(M)\le \alpha(\tilde M)$.
\end{enumerate}
\end{proposition}

Finally, the following result will be crucial for proving the main results of the paper:
\black{\begin{lemma}\label{lemma:zeros}
  Consider the linear system
  \begin{equation}\label{eq:SSHn}
  \begin{array}{rcl}
    \dot{x}&=&Mx+e_nw\\
    z&=&e_n^Tx
  \end{array}
  \end{equation}
  where $M\in\mathbb{R}^{n\times n}$. Then, we have that
  \begin{equation}
    e_n^T\adj(sI-M)e_n=\det(sI-M_{11}),
  \end{equation}
 where $M_{11}$ is $(n-1)\times(n-1)$ upper left block of $M$ and $\adj(\cdot)$ denotes the adjugate matrix. Moreover,
  \begin{enumerate}[(a)]
  \item if the pairs $(M,e_n)$ and $(M,e_n^T)$ are controllable and observable, respectively, then
\begin{equation}
  H(s):=e_n^T(sI-M)^{-1}e_n=\dfrac{\det(sI-M_{11})}{\det(sI-M)}.
\end{equation}
  \item if $M$ is a Metzler, Hurwitz stable matrix, then the solutions of $\det(sI-M_{11})=0$ are all located in the open left half-plane of the complex plane.
  \end{enumerate}
\end{lemma}
\begin{proof}
First note that
\begin{equation}\label{eq:detzeros}
  e_n^T\adj(sI-M)e_n=\det\begin{bmatrix}
    sI-M_n & -e_n\\
    e_n^T & 0
  \end{bmatrix}.
\end{equation}
If we decompose $M$ as
\begin{equation}
 M=:\begin{bmatrix}
    M_{11} & M_{12}\\
    M_{21} & M_{22}
  \end{bmatrix},
\end{equation}
where $M_{11}\in\mathbb{R}^{(n-1)\times(n-1)}$, $M_{12}\in\mathbb{R}^{(n-1)\times 1}$, $M_{21}\in\mathbb{R}^{1\times (n-1)}$, and $M_{22}\in\mathbb{R}$, then the expression \eqref{eq:detzeros} becomes
\begin{equation}
  \begin{bmatrix}
     sI-M_n & -e_n\\
    e_n^T & 0
  \end{bmatrix}=\begin{bmatrix}
    sI-M_{11} & -M_{12} & 0\\
    -M_{21} & s-M_{22} & -1\\
    0 & 1 & 0
  \end{bmatrix}.
\end{equation}
Using now the properties of the determinant on the last row, we get that
\begin{equation}
  \det\begin{bmatrix}
   sI-M_{11} & -M_{12} & 0\\
    -M_{21} & s-M_{22} & -1\\
    0 & 1 & 0
  \end{bmatrix}=-\det\begin{bmatrix}
    M_{11}-sI_n &  0\\
    M_{21} & -1
  \end{bmatrix}=\det(sI-M_{11}),
\end{equation}
which proves the first statement of the result.\\

\noindent Assuming now that the pairs $(M,e_n)$ and $(M,e_n^T)$ are controllable and observable, respectively, then there is no pole/zero cancellations in the transfer function $H(s)=\dfrac{e_n^T\adj(sI-M)e_n}{\det(sI-M)}$ and the result follows.\\

%
%

\noindent Finally, assuming that the matrix $M$ is Metzler and Hurwitz stable, then it is immediate to see that
\begin{equation}
  M_d:=\begin{bmatrix}
    M_{11} & 0\\
    0 & M_{22}
  \end{bmatrix}\le M
\end{equation}
where the inequality is entry-wise. From Perron-Frobenius theory (or Proposition \ref{prop:PF}), $M_d\le M$ implies that $\lambda_{\mathrm{PF}}(M_d)\le\lambda_{\mathrm{PF}}(M)$. Since $M$ is Hurwitz stable, then $\lambda_{\mathrm{PF}}(M)<0$, which implies that $M_d$, $M_{11}$ and $M_{22}$ are all Hurwitz stable matrices. This proves the third statement of the result and completes the proof.
\end{proof}}

\subsection{Positive real transfer functions and associated results}

\black{We consider the following definitions which can be found in \cite{Brogliato:07}:
\begin{define}\label{def:PRSPR}
  The function $H:\mathbb{C}\mapsto\mathbb{C}$ is said to be
  \begin{enumerate}[(1)]
    \item Positive Real (PR) if
    \begin{enumerate}[({\theenumi}a)]
      \item\label{st:PR:1} all the poles of $H(s)$ are in the closed left half-plane,
      \item\label{st:PR:2}  $\Re[H(j\omega)]\ge0$ for all $\omega\in[0,\infty)$, and
      \item\label{st:PR:3}  any pure imaginary pole $j\omega$ is simple and $\lim_{s\to j\omega}(s-j\omega)H(s)\ge0$.
    \end{enumerate}
    \item Weakly Strictly Positive Real (WSPR) if
    \begin{enumerate}[({\theenumi}a)]
      \item\label{st:SPR:1} all the poles of $H(s)$ are in the open left half-plane, and
      \item\label{st:SPR:2} $\Re[H(j\omega)]>0$ for all $\omega\in[0,\infty)$.
    \end{enumerate}
    \item Strictly Positive Real (SPR) if
    \begin{enumerate}[({\theenumi}a)]
      \item it is weakly strictly positive real, and
      \item\label{st:SPR:3} $H(\infty)>0$ or $\lim_{\omega\to \infty}\omega^2\Re[H(j\omega)]>0$.
    \end{enumerate}
    \item Strong Strictly Positive Real (Strong SPR) if
    \begin{enumerate}[({\theenumi}a)]
      \item it is weakly strictly positive real, and
      \item there exists a $\delta>0$ such that $\Re[H(j\omega)]\ge\delta$ for all $\omega\in[0,\infty]$.
    \end{enumerate}
  \end{enumerate}
\end{define}}

\begin{figure}
  \centering
  \includegraphics[width=.5\textwidth]{./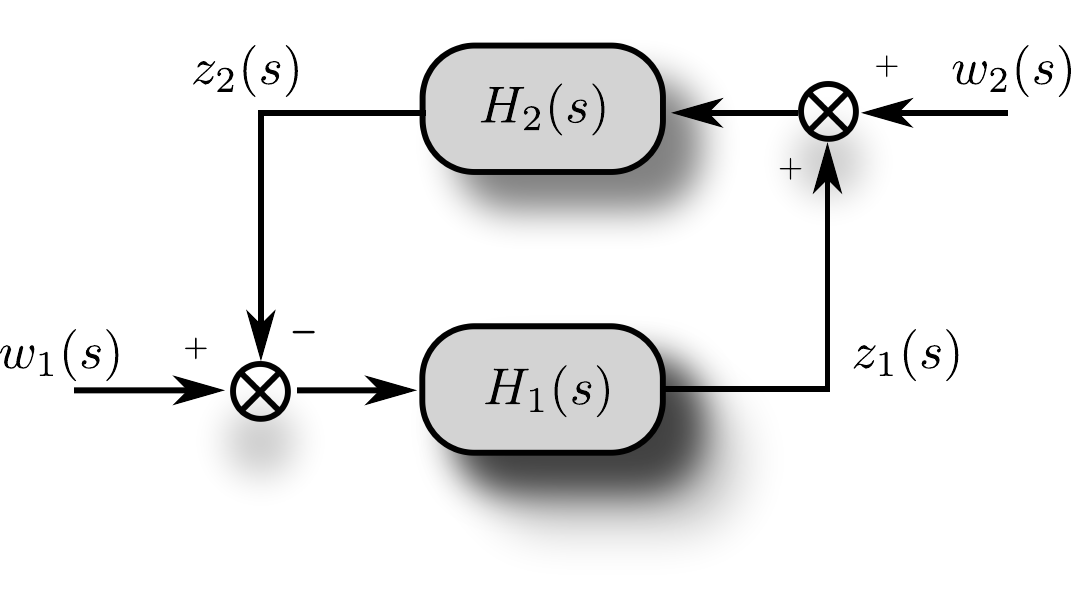}
  \caption{Negative interconnection}\label{fig:interconnectionp_passive}
\end{figure}

\black{The following result pertaining to the stability of the interconnection depicted in Figure~\ref{fig:interconnectionp_passive} will be instrumental in proving the main results of the paper:
\begin{theorem}[\hspace{-1pt}{\cite[Lemma 3.37]{Brogliato:07}}]\label{th:interconnection}
Let $H_1,H_2:\mathbb{C}\mapsto\mathbb{C}$ be two transfer functions. Assume further that
\begin{enumerate}
  \item  $H_1$ is strictly or weakly positive real, and
  \item $H_2$ is positive real.
\end{enumerate}
Then, the negative feedback interconnection defined as $z_1=H_1(w_1-z_2)$ and $z_2=H_2(w_2+z_1)$, depicted in Figure~\ref{fig:interconnectionp_passive}, is internally stable, this is, the transfer $w\mapsto z$ given by
\begin{equation}
    \dfrac{1}{1+H_1H_2}\begin{bmatrix}
      H_1 & -H_1H_2\\
      H_1H_2 & H_2
    \end{bmatrix}
\end{equation}
is stable.
\end{theorem}}

The following result provides sufficient conditions under which a rational, strictly proper transfer function is WSPR:
\black{\begin{proposition}\label{prop:LMI}
  Let us consider the matrices $A\in\mathbb{R}^{n\times n}$, $B\in\mathbb{R}^{n\times m}$, and $C\in\mathbb{R}^{m\times n}$, and define the function $H(s):=C(sI-A)^{-1}B$. Assume further that
 \begin{enumerate}[(1)]
    \item $H(s)$ has no zero on the imaginary axis, and
    \item there exist a matrix $P\in\mathbb{S}_{\succ0}^n$ and an $\eps>0$ such that $PB-C^T=0$ and $A^TP+PA+2\eps C^TC\prec0$.
 \end{enumerate}
   Then, $H(s)$ is weakly strictly positive real.
\end{proposition}
\begin{proof}
 Note that $A^TP+PA+2\eps C^TC\prec0$ for some $P\succ0$ and $\eps>0$ implies that $A$ is Hurwitz stable, meaning that all the poles of $H(s)$ have negative real part. This proves that condition \eqref{st:SPR:1} of Definition \ref{def:PRSPR} holds.\\

 \noindent To show that the condition \eqref{st:SPR:2} of Definition \ref{def:PRSPR} holds, first note that under the conditions of the result, the following matrix
  \begin{equation}
     \begin{bmatrix}
      A^TP+PA+2\eps C^TC & PB-C^T\\
      B^TP-C & 0
    \end{bmatrix}=\begin{bmatrix}
      A^TP+PA & PB\\
    B^TP & 0
    \end{bmatrix}+\begin{bmatrix}
      C & 0\\
      0 & I
    \end{bmatrix}^T\begin{bmatrix}
      2\eps I & -I\\
      -I & 0
    \end{bmatrix}\begin{bmatrix}
      C & 0\\
      0 & I
    \end{bmatrix}
  \end{equation}
  is negative semidefinite. From the Kalman-Yakubovich-Popov Lemma \cite{Willems:71b,Rantzer:96}, this is equivalent to saying that
  \begin{equation}
    \begin{bmatrix}
      H(j\omega)\\
      I
    \end{bmatrix}^*\begin{bmatrix}
      2\eps I & -I\\
      -I & 0
    \end{bmatrix}\begin{bmatrix}
      H(j\omega)\\
      I
    \end{bmatrix}\le0,\ \textnormal{for all }\omega\in[0,\infty).
  \end{equation}
  Note that $H(j\omega)$ is well-defined since $A$ has no eigenvalues on the imaginary axis. Expanding the left-hand side yields
  \begin{equation}
    2\eps |H(j\omega)||^2-2\Re[H(j\omega)]\le0,\ \textnormal{for all }\omega\in[0,\infty),
  \end{equation}
  which implies that $\Re[H(j\omega)]\ge \eps|H(j\omega)|^2$ for $\omega\in[0,\infty)$. Since $H(s)\ne0$ on the imaginary axis, this implies that the statement \eqref{st:SPR:2} in  Definition \ref{def:PRSPR} holds, and proves the desired result.
\end{proof}}

The following technical result will also prove to be very useful for proving the main results of the paper.
\black{\begin{lemma}\label{lemma:infinity}
  Let $H:\mathbb{C}\mapsto\mathbb{C}$ be a strictly proper, rational transfer function and decompose it $H(s)=:kN(s)/D(s)$ where $k\in\mathbb{R}$, and $N(s),D(s)$ are coprime, monic polynomials such that $\deg(D)=n$ and $\deg(N)=n-1$. Then, we have that
  \begin{equation}
    \lim_{\omega\to\infty}\omega^2k\Re[H(j\omega)]=k\left(\sum_{i=1}^{n-1}z_i-\sum_{i=1}^{n}p_i\right)
  \end{equation}
  where the $p_i$'s are the poles and the $z_i$'s are the zeros of the transfer function $H(s)$; i.e. $N(z_i)=0$, $i=1,\ldots,n-1$, and $D(p_i)=0$, $i=1,\ldots,n$.
\end{lemma}
\begin{proof}
  We have that
  \begin{equation}
      \Re\left[\dfrac{N(j\omega)}{D(j\omega)}\right]=\dfrac{\Re\left[N(j\omega)D(j\omega)^*\right]}{|D(j\omega)|^2}=\dfrac{\Re\left[N(j\omega)D(-j\omega)\right]}{|D(j\omega)|^2}.
  \end{equation}
  Since the degree of $N(j\omega)D(-j\omega)$ in $\omega$ is $2n-1$ and the degree of $|D(j\omega)|^2$ is $2n$, to properly evaluated the limit of $\omega^2\Re[H(j\omega)]$, we need to find the coefficient of the term in $\omega^{2n -2}$ of  $N(j\omega)D(-j\omega)$.\\

  \noindent To figure this out, let $N(s)=\sum_{i=0}^{n-1}N_is^i$ and $D(s)=\sum_{i=0}^{n}D_is^i$ , which yields
\begin{equation}
  \begin{array}{rcl}
    \Re[N(j\omega)D(-j\omega)]&=&\Re\left[\left(\sum_{i=0}^{n -1}N_i(j\omega)^i\right)\left(\sum_{i=0}^{n }D_i(-j\omega)^i\right)\right]\\
                                                &=& N_{n -1}(j\omega)^{n -1}D_{n -1}(-j\omega)^{n -1}+N_{n -2}(j\omega)^{n -2}D_{n }(-j\omega)^{n}+R_N(\omega)\\
                                                &=& \left(N_{n -1}D_{n -1}-N_{n -2}D_{n }\right)\omega^{2n -2}+R_N(\omega),
  \end{array}
\end{equation}
where $R_N(\omega)$ is the reminder containing even powers of $\omega$ less than $2n-2$. Define also $R_D(\omega)$ to be a polynomial of even powers less than $2n$ such that  $|D(j\omega)|^2=\omega^{2n}+R_D(\omega)$. This yields
\begin{equation}
\begin{array}{rcl}
  \lim_{\omega\to\infty}\omega^2k\Re[H(j\omega)]    &=&     \lim_{\omega\to\infty}\omega^2k\dfrac{\left(N_{n -1}D_{n -1}-N_{n -2}D_{n }\right)\omega^{2n-2 }+R_N(\omega)}{w^{2n}+R_D(\omega)}\\
                                                                                                        &=&k\left(N_{n -1}D_{n -1}-N_{n -2}D_{n }\right).
\end{array}
\end{equation}
As $N(s)$ and $D(s)$ are both monic polynomials, we have that $D_n =N_{n -1}=1$, and this, together with the fact that  $-D_{n -1}$ coincides with the sum of the poles and $-N_{n -2}$ coincides with the sum of the zeros, prove the desired result.
\end{proof}}

\section{Reaction networks and robust perfect adaptation}

\subsection{Reaction Networks}

\black{Reaction networks are a very powerful modeling paradigm that can be used to represent any population system, such as those arising in ecology, epidemiology, opinion dynamics, multi-agent systems, etc.\cite{Goutsias:13}. A reaction network $(\Xz,\mathcal{R})$ consists of a set of $n$ molecular species $\Xz=\{\X{1},\ldots,\X{n}\}$ that interacts through $K$ reaction channels $\mathcal{R}=\{\mathcal{R}_1,\ldots,\mathcal{R}_K\}$ denoted as
\begin{equation}\label{eq:RN}
 \mathcal{R}_k:\ \sum_{i=1}^n\zeta_{k,i}^l\X{i}\rarrow{\lambda_k}\sum_{i=1}^n\zeta_{k,i}^r\X{i},\ k=1,\ldots,K
\end{equation}
where $\zeta_{k,i}^\ell,\zeta_{k,i}^r\in\mathbb{Z}^n_{\ge0}$ are the left and right stoichiometric vectors. The stoichiometric vector of reaction  $\mathcal{R}_k$ is given by $\zeta_k:=\zeta_k^r-\zeta_k^\ell\in\mathbb{Z}^n$ where $\zeta_k^r=\col(\zeta_{k,1}^r,\ldots,\zeta_{k,n}^r)$ and $\zeta_k^l=\col(\zeta_{k,1}^l,\ldots,\zeta_{k,n}^l)$. 
Each reaction $\mathcal{R}_k$ is also described by its propensity function $\lambda_k$ that describes the conditions under which this reaction occurs. Such functions may take various forms depending on the context and the type of kinetics, such as mass-action, Hill, Michaelis-Menten, etc.; see \cite{Voit:00,Alon:07}. In particular, when the network only has mass-action kinetics, only the reaction rates are indicated in \eqref{eq:RN}. This will be explicitly mentioned when this is the case. In all those cases, we define $\mathcal{P}$ to be the set of the network parameters.\\

In the deterministic setting, reaction networks are quantitatively described in terms of a vector of concentrations, denoted here by $x(t)$, which evolves on the state-space $\mathcal{S}\subseteq\mathbb{R}_{\ge0}^n$. In that case, the propensity functions $\lambda_k:\mathcal{S}\mapsto\mathbb{R}_{\ge0}$ are defined in such a way that $\mathcal{S}$ is forward invariant; i.e. for all $x_0\in\mathcal{S}$, we have that $x(t)\in\mathcal{S}$ for all $t\ge0$. This will be tacitly assumed to be the case in the rest of the paper. The dynamical model representing the deterministic reaction network \eqref{eq:RN} is therefore given by the Reaction Rate Equation (RRE)
\begin{equation}\label{eq:RRE}
\begin{array}{rcl}
  \dot{x}(t)&=&\displaystyle\sum_{k=1}^K\zeta_k\lambda_k(x(t)),\ t\ge0\\ 
  x(0)&=&x_0
\end{array}
\end{equation}
which takes the form of a system of differential equations, emphasizing that reactions are considered as continuous processes which all push the state in the direction given by their associated stoichiometric vector.}

\subsection{Perfect Adaptation Problem and Integral Control}

\black{The perfect adaptation property \cite{Alon:07} is the property of a reaction network that certain molecular counts will return to the equilibrium levels they had before some environmental changes and network perturbations. This is one of the many types of homeostatic behaviors that can be found in living organisms \cite{Cannon:29} or that can be theoretically constructed \cite{Alon:07,Ma:09,Briat:15e,Golubitsky:17,Khammash:21,Gupta:22}. The version we consider in this paper is formally defined below:
\begin{define}[Perfect Adaptation]\label{def:RPA}
    Consider a reaction network $(\Xz,\mathcal{R})$ and a species $\Yz\in\Xz$. The species $\Yz$ is said to exhibit the perfect adaptation property in the network $(\Xz,\mathcal{R})$ if
  \begin{enumerate}
  \item the equilibrium point $x^*$ is (locally) asymptotically stable for the dynamics of the network \eqref{eq:RRE}, and
  \item the equilibrium value of the concentrations of the species $\Yz$, denoted by $y^*$, is independent of all the parameters in a subset of $\mathcal{P}$.
  \end{enumerate}
\end{define}}

\black{Perfect adaptation is a very strong property. Indeed, almost all networks will not exhibit it and a natural problem that comes to mind is: given a network that does not show any adaptation property, how can we modify it in an appropriate way so that it exhibits this property? This can either be done by suitably modifying the current network, or through the introduction of new molecular species and reactions. The latter approach is formalized below:}
%
\black{\begin{define}[Perfect Adaptation Problem]\label{def:RPAP}
  Consider a reaction network $(\Xz,\mathcal{R})$ and a species $\Yz\in\Xz$. The robust perfect adaptation problem consists of finding a controller network $(\Zz,\mathcal{R}^c)$ such that
  \begin{enumerate}
  \item $\Yz$ exhibits the perfect adaptation property in the network $(\Xz\cup\Zz,\mathcal{R}\cup\mathcal{R}^c)$, and
  \item $y^*$ is equal to a tunable set-point which is a known function of the controller parameters only (i.e. it is independent of the parameters of the network $(\Xz,\mathcal{R})$).
  \end{enumerate}
\end{define}}

\black{This problem is an instance of the regulation problem in control theory -- an ubiquitous problem in science and engineering-- which consists of finding a controller that makes a controlled system not only stable but also such that its output converges to a desired set-point. Many relevant control problems fall into this category such as temperature control, cruise control, auto-pilot design, etc. \cite{Baillieul:15} Different controller structures need to be considered depending on the class of perturbations/stimuli sought to be compensated for. This structure is dictated by the so-called Internal Model Principle \cite{Francis:76,Sontag:03}, which stipulates that controllers need to embed a model of the perturbations they aim at rejecting. This internal model principle is a fundamental result which goes beyond the realm of control theory and is also central in other fields such as neuroscience, cognition, etc. \cite{Bin:22}. When the perturbations/stimuli are assumed to be (locally) constant, this principle states that controllers that solves the regulation problem necessarily incorporate an integral action, whence the name of integral controllers. It has also been recently shown that reaction networks exhibiting robust perfect adaptation properties necessarily involved a hidden integral action \cite{Gupta:22,Araujo:23,Hirono:23} implemented in terms of molecular reactions.}\\

\black{A necessary condition for perfect adaptation is that the set-point be a valid value for the species $\Yz$. This leads us to the concept of set-point admissibility:
\begin{define}[Set-point admissibility]
    Consider a reaction network $(\Xz,\mathcal{R})$ and a species $\Yz\in\Xz$. Moreover, decompose $\mathcal{P}$ as $\mathcal{P}_f\cup\mathcal{P}_t$ where $\mathcal{P}_f$ is a set of fixed network parameters and $\mathcal{P}_t$ is a set of tunable network parameters.

  We say that a set-point $r$ is admissible for the species  $\Yz$ if, given the values for the parameters in $\mathcal{P}_f$, we can select values for the parameters in $\mathcal{P}_t$ such that $x^*\ge0$ and $y^*=r$ where $x^*$ is an equilibrium point for the  dynamics \eqref{eq:RRE}.
\end{define}}

\subsection{Antithetic Integral Controllers}

\black{The Antithetic Integral Controller (AIC), introduced in \cite{Briat:15e} and further discussed in \cite{Briat:19:Logistic,Olsman:19,Olsman:19b}, is one of the few, structurally simple, integral controllers that can be implemented in terms of chemical reactions. Other ones include zeroth-order integral controllers \cite{Ni:09} and autocatalytic integral controllers \cite{Drengstig:12,Briat:16a,Xiao:18,Briat:19:Logistic}. This controller relies on molecular sequestration as core mechanism, which consists of two complementary molecular species strongly binding with each other. Sequestration is a well-known mechanism that has been widely selected to achieve certain pivotal functions within living organisms, such as bacterial stress response through the use of sigma-factors \cite{Trevino:13}. More theoretically, it has also been shown that such a sequestration reaction is essential in the realization problem \cite{Oishi:10b,Fages:17} and that it can be interpreted as a molecular implementation of a subtraction operator \cite{Cuba:23,Filo:23b}, as also emphasized in \cite{Briat:15e,Briat:19:Logistic}. The internal complementarity structure of the AIC makes it a very flexible controller that can easily implement negative and positive feedback loops using both activation or inhibition interactions, as illustrated in Table~\ref{fig:AICs}. More advanced controller structures than those depicted in Table~\ref{fig:AICs} exist and can be cleverly generated through the addition of controller reactions/species to achieve more complex regulatory behaviors \cite{Filo:22}. The common denominator to all those designs is that the set-point should activate the controlled species, which should in turn repress itself, both possibly through a sequence of reactions.}

\begin{table}[H]
  \centering
  \begin{tblr}{
             colspec = {|Q[c,m]||Q[c,m]|Q[c,m]|},  
             %
             row{1}  = {font=\bfseries},
             %
             column{1}  = {font=\bfseries},
             }
  \toprule[1.5pt]
  & Activating System & Inhibiting System\\
  %
    &  \parbox{5cm}{\centering \includegraphics[width=.25\textwidth]{./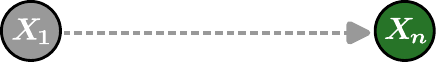}} & \parbox{5cm}{\centering\includegraphics[width=.15\textwidth]{./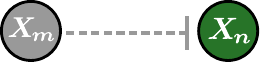}}\\
  \midrule
    {Activating\\AIC}  &  \parbox{5cm}{\includegraphics[width=.3\textwidth]{./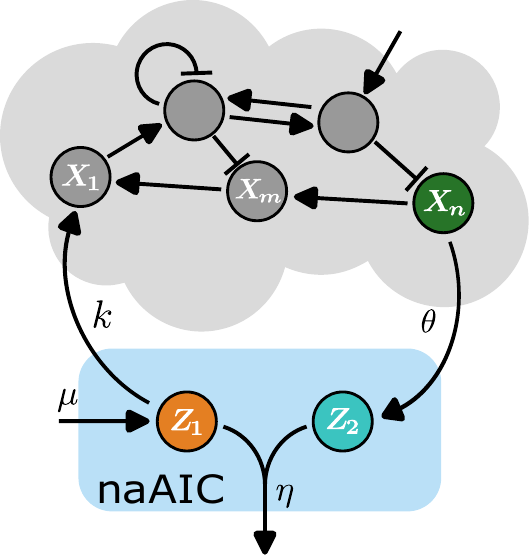}} & \parbox{5cm}{\includegraphics[width=.3\textwidth]{./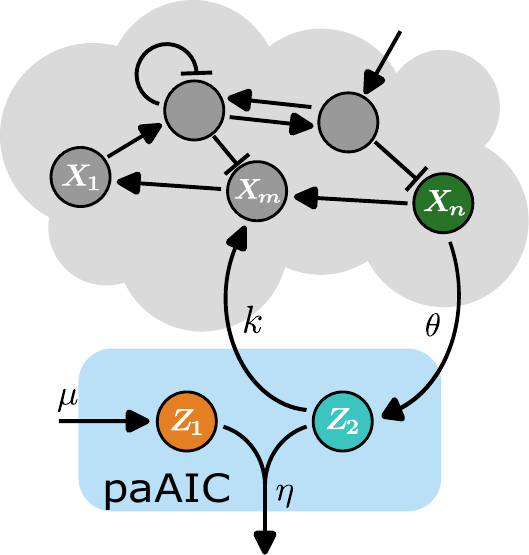}}\\
    \midrule
    {Inhibiting\\AIC} & \parbox{5cm}{\includegraphics[width=0.3\textwidth]{./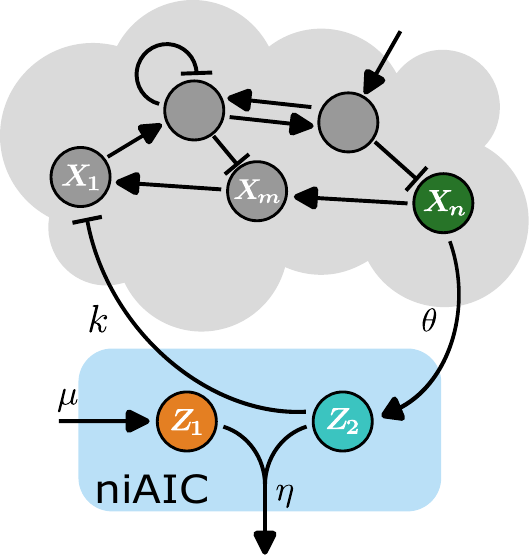}}  & \parbox{5cm}{\includegraphics[width=0.3\textwidth]{./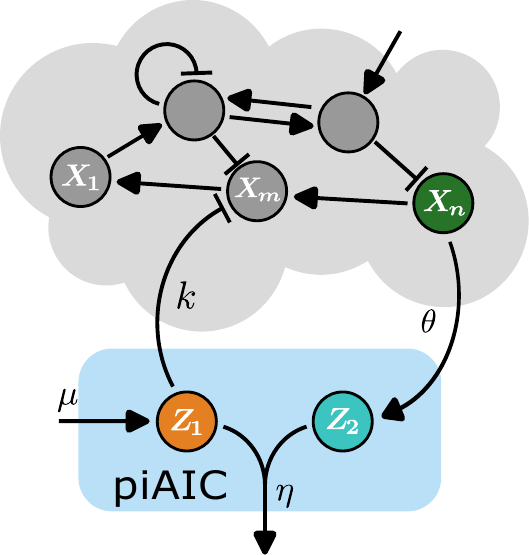}}\\
  \bottomrule[1.5pt]
  \end{tblr}
  \black{\caption{Four different flavors of the Antithetic Integral Controller depending on whether it activates an activating system (naAIC), inhibits an activating system (niAIC), activates an inhibiting system (paAIC), and inhibits an inhibiting system (piAIC).  This controller involves two additional molecular species, $\Z{1}$ and $\Z{2}$, and four additional reactions. The controllers have information about the level of the controlled species $\Yz=\X{n}$ through the sensing reaction and act back onto the system through the reaction reaction. The set-point is (partially) set by the set-point $\mu$ and, finally, the  essential sequestration reaction $\Z{1}+\Z{2}\rarrow{}\phib$ plays the role of a comparator between the levels of the controlled species.  In all those designs, we can observe that $\mu$ promotes the expression of $\X{n}$, which in turn represses itself through a sequence of reactions. It can be shown that all those controllers solve the perfect adaptation problem described in Definition~\ref{def:RPAP} in the sense that $y=^*x_n^*=\mu/\theta$ independently of the values of the parameters of the networks, provided those verify various technical assumptions.}\label{fig:AICs}}
\end{table}

\subsubsection{naAIC}

\black{The original version of the AIC, as introduced in \cite{Briat:15e}, corresponds to the naAIC setup depicted in Table~\ref{fig:AICs}. A possible reaction network representation for this controller is given by
\begin{equation}\label{eq:AIC:naAIC}
  \phib\rarrow{\mu}\Z{1},\ \Yz\rarrow{\theta}\Yz+\Z{2},\ \Z{1}+\Z{2}\rarrow{\eta}\phib,\  \Z{1}\rarrow{k}\Z{1}+\X{1},
\end{equation}
where $\Z{1}$ is both the actuating and reference species, $\Z{2}$ is the sensing species, $\Yz$ is the measured/controlled species, and $\X{1}$ is the actuated species. The parameters $\mu,\theta,\eta,k$ are all positive rate parameters of the reactions, which are all assumed to be mass-action. In compliance with the naAIC structure, it is assumed here that $\X{1}$ activates $\Yz$, possibly through a sequence of reactions. The deterministic model of this controller is given by
\begin{equation}\label{eq:AIC:naAIC:det}
    \dot{z}_1(t)=\mu-\eta z_1(t)z_2(t)\quad\textnormal{and}\quad \dot{z}_2(t)=\theta y(t)-\eta z_1(t)z_2(t).
\end{equation}
To avoid entering too technical discussions, we simply illustrate that this controller indeed solves the perfect adaptation problem stated in Definition~\ref{def:RPAP} for the gene expression network
  \begin{equation}\label{eq:AIC:gene_exp}
\phib\rarrow{b_0}\X{1},\ \X{1}\rarrow{k_2}\X{1}+\X{2},\ \X{1}\rarrow{\gamma_1}\phib,\ \X{2}\rarrow{\gamma_2}\phib
  \end{equation}
  where $\X{1}$ and $\X{2}$ denote the mRNA and protein species, respectively. Above, the positive rate parameters $b_0,k_2,\gamma_1$, and $\gamma_2$ of the mass-action reactions correspond to the basal transcription rate, the translation rate, the mRNA degradation rate, and the protein degradation rate, respectively. We consider here the problem of ensuring perfect adaptation for the protein concentration, that is, we set $\Yz=\X{2}$. It can be shown that the basal equilibrium level $$(x_1^*,x_2^*)=\left(\dfrac{b_0}{\gamma_1},\dfrac{b_0k_2}{\gamma_1\gamma_2}\right)$$
  is asymptotically stable for the system
\begin{equation}
\begin{array}{rcl}
      \dot{x}_1(t)&=&-\gamma_1x_1(t)+b_0\\
      \dot{x}_2(t)&=&k_2x_1(t)-\gamma_2x_2(t)
\end{array}
\end{equation}
describing the network \eqref{eq:AIC:gene_exp}. Adding now the input $u$ so as to act on the transcription rate, we obtain the following system describing
\begin{equation}
\begin{array}{rcl}
      \dot{x}_1(t)&=&-\gamma_1x_1(t)+b_0+u(t)\\
      \dot{x}_2(t)&=&k_2x_1(t)-\gamma_2x_2(t)\\
      y(t)&=&x_2(t)
\end{array}
\end{equation}
from which we can observe that $y^*=\frac{(b_0+u^*)k_2}{\gamma_1\gamma_2}>\frac{b_0k_2}{\gamma_1\gamma_2}$. This means that one can only increase the output from its basal expression level and this will be so regardless the type of controllers that is considered.\\

When the set-point $r=\mu/\theta$ is admissible (i.e. $r>\frac{b_0k_2}{\gamma_1\gamma_2}$) and certain conditions are met for the closed-loop network consisting of the interconnection of \eqref{eq:AIC:gene_exp}-\eqref{eq:AIC:naAIC}
\begin{equation}
\begin{array}{rcl}
      \dot{x}_1(t)&=&-\gamma_1x_1(t)+b_0+kz_1(t)\\
      \dot{x}_2(t)&=&k_2x_1(t)-\gamma_2x_2(t)\\
      \dot{z}_1(t)&=&\mu-\eta z_1(t)z_2(t)\\
      \dot{z}_2(t)&=&\theta y(t)-\eta z_1(t)z_2(t),
\end{array}
\end{equation}
then this network exhibits perfect adaptation properties and $$\lim_{t\to\infty}x_2(t)=\frac{\mu}{\theta}.$$ When the admissibility condition is not met, however, there is no equilibrium point in the positive orthant, which implies it cannot include any compact forward invariant set, which is equivalent to saying that the solution must grow without bound \cite{Richeson:02,Richeson:04}. Those facts are illustrated in Figure~\ref{fig:naAIC}.}
%
%

\begin{figure}
\begin{minipage}[h]{0.45\linewidth}
    \begin{center}
  \includegraphics[width=\textwidth]{./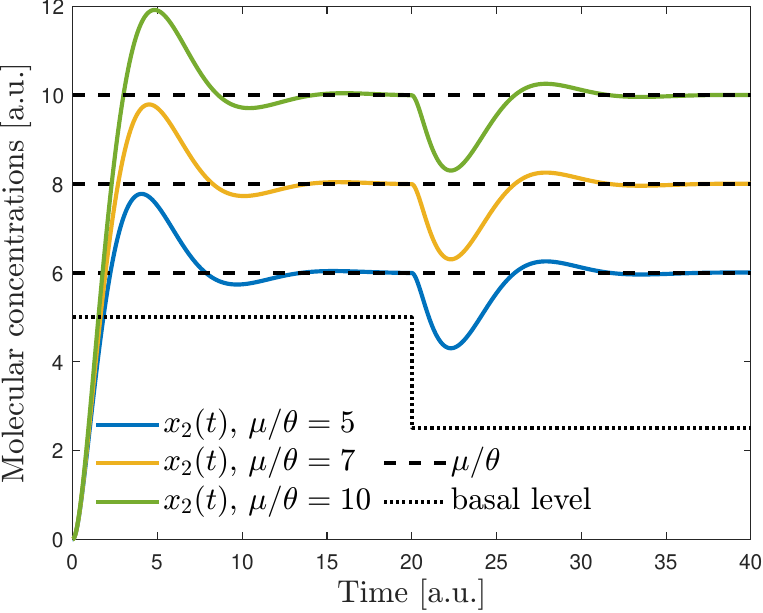}
  \end{center}
\end{minipage}
\begin{minipage}[h]{0.45\linewidth}
    \begin{center}
  \includegraphics[width=\textwidth]{./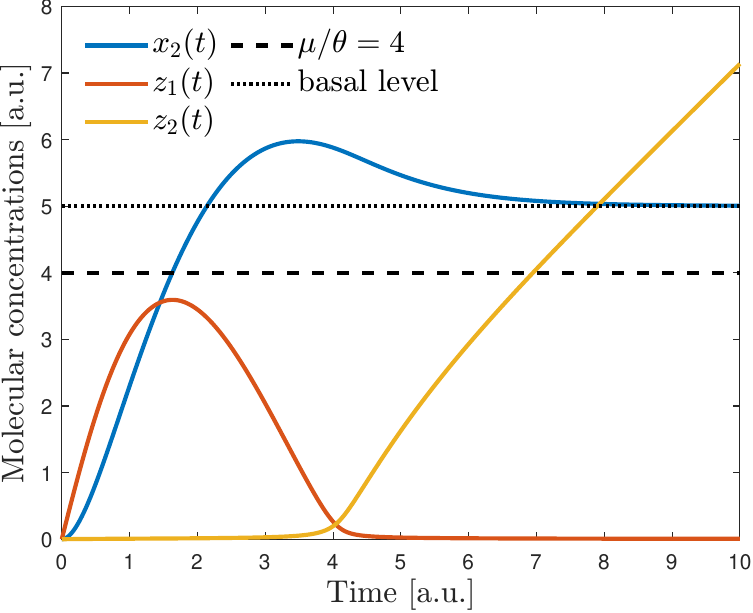}
  \end{center}
\end{minipage}
\caption{Simulation of the reaction network \eqref{eq:AIC:gene_exp}-\eqref{eq:AIC:naAIC} with the parameters $\gamma_1=1$, $\gamma_2=2$, $k_2=1$, $b_0=10$, $k=1$, $\eta=100$, $\theta=1$ and for various values for $\mu$. At time $t=20$, the value of $b_0$ is divided by 2. When the set point $\mu/\theta$ is set above the basal level, the controlled species $\X{2}$ exhibits the robust perfect adaptation property. When the set-point is set to a value lower than the basal level, the concentration of controller species $\Z{2}$ grows without bound and perfect adaptation is not achieved for $\X{2}$.}\label{fig:naAIC}
\end{figure}

\subsubsection{niAIC}

\black{The niAIC shown in Table~\ref{fig:AICs} is another important class of AICs, which will turn out to be an essential part of this paper. This structure was briefly mentioned in \cite{Briat:15e} without entering into technical details. A possible reaction network representation for this controller is given by
\begin{equation}\label{eq:AIC:niAIC}
  \phib\rarrow{\mu}\Z{1},\ \Yz\rarrow{\theta}\Yz+\Z{2},\ \Z{1}+\Z{2}\rarrow{\eta}\phib,\  \X{m}+\Z{2}\rarrow{k}\Z{2}
\end{equation}
where $\Z{1}$ is the reference species, $\Z{2}$ is both the sensing and actuating species, $\Yz$ is the measured/controlled species, and $\X{m}$ is the actuated species. The parameters $\mu,\theta,\eta,k$ are all positive rate parameters of the reactions, which are all assumed to be mass-action. In compliance with the niAIC structure, it is assumed here that $\X{m}$ represses $\Yz$, possibly through a sequence of reactions. The deterministic model of this controller is also given by \eqref{eq:AIC:naAIC:det}.\\

As in the previous section, we illustrate the controller properties on the gene expression network \eqref{eq:AIC:gene_exp} to which we add the control input according to the structure of the controller \eqref{eq:AIC:niAIC} as follows
\begin{equation}
\begin{array}{rcl}
      \dot{x}_1(t)&=&-\gamma_1x_1(t)+b_0\\
      \dot{x}_2(t)&=&k_2x_1(t)-\gamma_2x_2(t)-u(t)x_2(t)\\
      y(t)&=&x_2(t).
\end{array}
\end{equation}
From this expression we can observe that $y^*=\frac{b_0k_2}{\gamma_1(\gamma_2+u^*)}<\frac{b_0k_2}{\gamma_1\gamma_2}$. This means that one can only decrease the output from its basal expression level and this will be so regardless the type of controllers that is considered.\\

When the set-point $r=\mu/\theta$ is admissible (i.e. $r<\frac{b_0k_2}{\gamma_1\gamma_2}$) and certain conditions are met for the closed-loop network consisting of the interconnection of \eqref{eq:AIC:gene_exp}-\eqref{eq:AIC:niAIC}
\begin{equation}
\begin{array}{rcl}
      \dot{x}_1(t)&=&-\gamma_1x_1(t)+b_0\\
      \dot{x}_2(t)&=&k_2x_1(t)-\gamma_2x_2(t)-kz_2(t)x_2(t)\\
      \dot{z}_1(t)&=&\mu-\eta z_1(t)z_2(t)\\
      \dot{z}_2(t)&=&\theta y(t)-\eta z_1(t)z_2(t),
\end{array}
\end{equation}
then this network exhibits perfect adaptation properties and $$\lim_{t\to\infty}x_2(t)=\frac{\mu}{\theta}.$$ As for the naAIC, the solution of this system must grow without bound whenever this admissibility condition is not met, as illustrated in Figure~\ref{fig:niAIC}.}


\begin{figure}
\begin{minipage}[h]{0.45\linewidth}
    \begin{center}
  \includegraphics[width=\textwidth]{./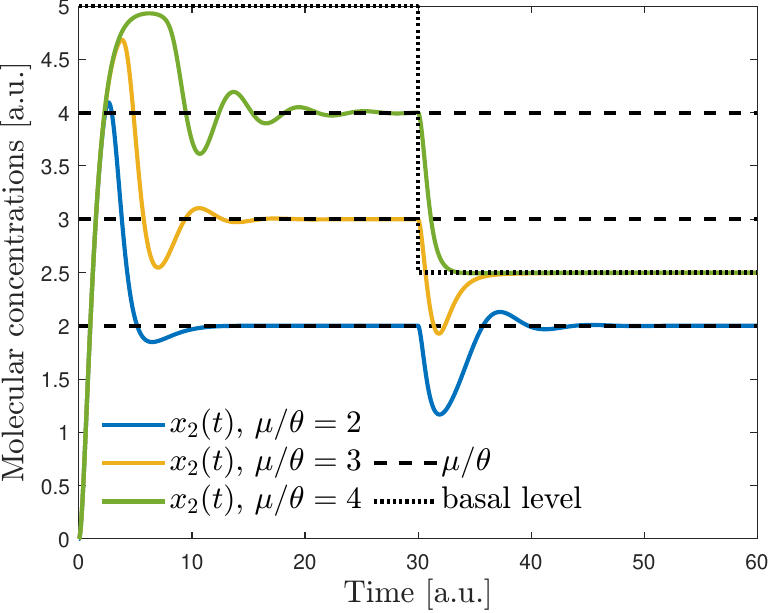}
  \end{center}
\end{minipage}
\begin{minipage}[h]{0.45\linewidth}
    \begin{center}
  \includegraphics[width=\textwidth]{./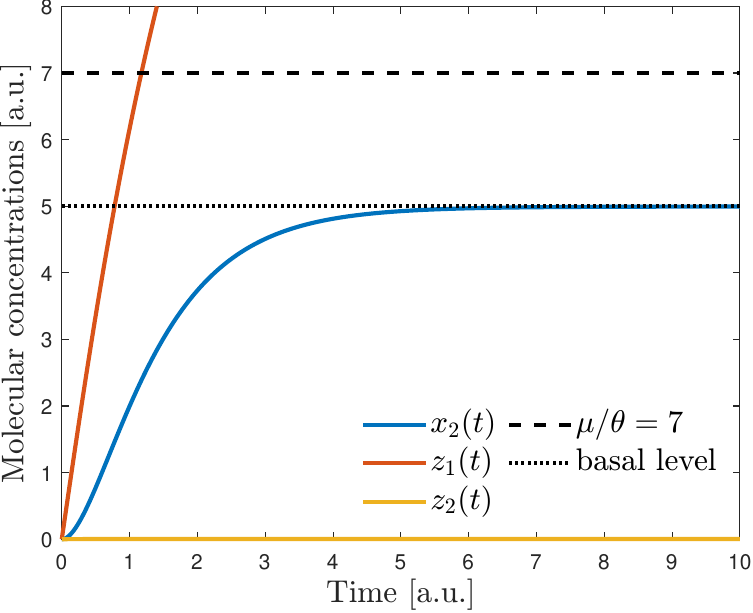}
  \end{center}
\end{minipage}
\caption{Simulation of the reaction network \eqref{eq:AIC:gene_exp}-\eqref{eq:AIC:niAIC} with the parameters $\gamma_1=1$, $\gamma_2=2$, $k_2=1$, $b_0=10$, $k=0.5$, $\eta=100$, $\theta=1$ and for various values for $\mu$. At time $t=30$, the value of $b_0$ is divided by 2. When the set point $\mu/\theta$ is set below the basal level, the controlled species $\X{2}$ exhibits the robust perfect adaptation property. When the set-point is set to a value higher than the basal expression level, the concentration of the controller species $\Z{1}$ grows without bound and perfect adaptation is not achieved for $\X{2}$, which can only converge to its basal level.}\label{fig:niAIC}
\end{figure}

\subsection{Antithetic Integral Rein Controllers}

\subsubsection{Generalities}

\black{In light of the previous discussion, it seems interesting to combine both strategies in order to eliminate the limitations on the set of admissible set-points. This may be achieved through the consideration of Antithetic Integral Rein Controllers (AIRCs) that make use of both controller species in order to actuate the system where one would activate the output while the other one would inhibit it. Different versions for the AIRC are depicted in Table~\ref{fig:AIRCs_simple}. The use of both species has been considered in the past in the context of Brink controllers \cite{Cuba:21CellSystems} where both positive and negative actuation were performed to modulate a phosphorylation cycle. It is also expected that a two-way actuation would greatly improve the temporal behavior of the closed-loop network as we would not be relying anymore on the natural convergence properties of the system, which may be slow. Indeed, inhibition is expected to improve decreasing properties of the output while activation should improve the increasing properties.}


\begin{table}[H]
  \centering
  \begin{tblr}{
             colspec = {|Q[c,m]||Q[c,m]|Q[c,m]|},  
             %
             row{1}  = {font=\bfseries},
             %
             column{1}  = {font=\bfseries},
             }
  \toprule[1.5pt]
  \diagbox{$Z_1$}{$Z_2$} & {Activation\\ \ \\ \includegraphics[width=.13\textwidth]{./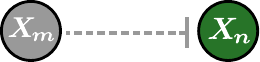}} & {Inhibition\\ \ \\ \includegraphics[width=.13\textwidth]{./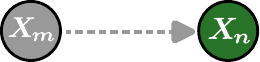}}\\
  \midrule
    {Activation\\ \ \\ \includegraphics[width=.13\textwidth]{./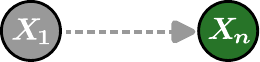}}  &  \parbox{5cm}{\includegraphics[width=.3\textwidth]{./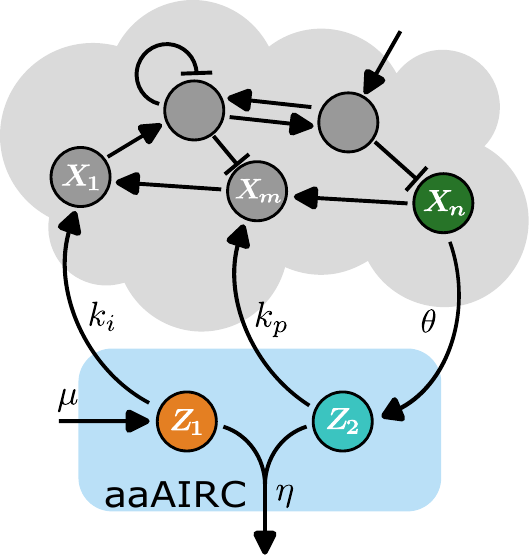}} & \parbox{5cm}{\includegraphics[width=0.3\textwidth]{./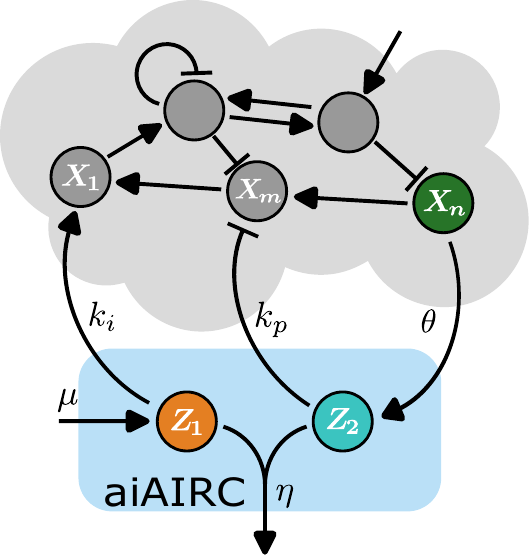}}\\
    \midrule
    {Inhibition\\ \ \\ \includegraphics[width=.13\textwidth]{./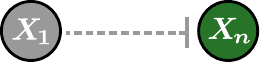}} & \parbox{5cm}{\includegraphics[width=.3\textwidth]{./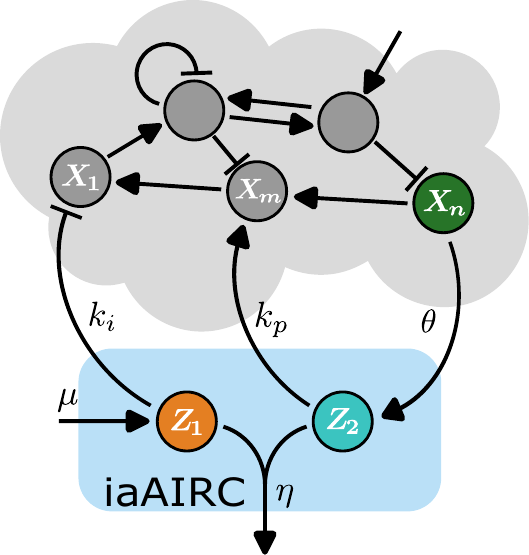}} & \parbox{5cm}{\includegraphics[width=0.3\textwidth]{./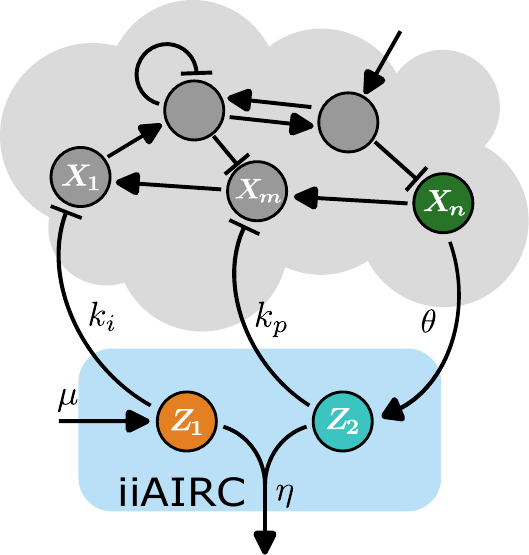}}\\
  \bottomrule[1.5pt]
  \end{tblr}
  \caption{Different structures of the Antithetic Integral Rein Controller depending on whether $\Z{1}$ and $\Z{2}$ activate or repress their respective actuated species. Once again, the key idea is that $\mu$ must activate $\X{n}$, which should in turn repress itself.}\label{fig:AIRCs_simple}
\end{table}

\subsubsection{Output Rein Control}

\black{A particular instance of the AIRC, depicted in Figure~\ref{fig:oaiAIC}, is when the output is directly repressed by the sensing species while the reference species indirectly activate the output through $\X{1}$. This specific instance, first considered in \cite{Gupta:19} and also discussed in \cite{Plesa:23}, has been shown to ensure better convergence properties in the case of a simple gene expression network. A reaction network implementation of the output aiAIRC is given by
\begin{equation}\label{eq:AIC:AIRC}
  \phib\rarrow{\mu}\Z{1},\ \Yz\rarrow{\theta}\Yz+\Z{2},\ \Z{1}+\Z{2}\rarrow{\eta}\phib,\  \Yz+\Z{2}\rarrow{k_p}\Z{2}, \Z{1}\rarrow{k_i}\Z{1}+\X{1}
\end{equation}
where $\Z{1}$ is both the reference and the activating species, $\Z{2}$ is both the sensing and inhibiting species, $\Yz$ is the measured/controlled species, and both $\X{1}$ and $\X{m}$ are actuated species. The parameters $\mu,\theta,\eta,k_p,k_i$ are all positive rate parameters of the reactions, which are all assumed to be mass-action. In compliance with the aiAIRC structure, it is assumed here that $\X{1}$ activates $\Yz$, possibly through a sequence of reactions. The deterministic model of this controller is also given by \eqref{eq:AIC:naAIC:det}.}\\

\begin{figure}[H]
  \centering
  \includegraphics[width=.3\textwidth]{./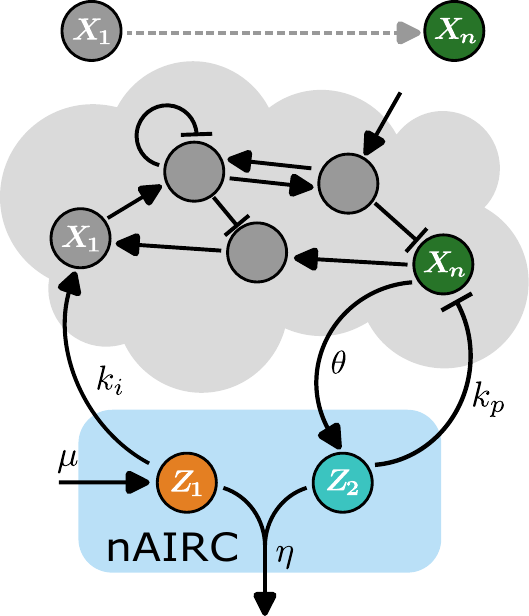}
  \caption{Output aiAIRC: The sensing species $\Z{2}$ directly represses the output $\Yz=\X{n}$ whereas the reference species $\Z{1}$ indirectly activate it by activating $\X{1}$.}\label{fig:oaiAIC}
\end{figure}

\black{We again illustrate the benefits of this controller structure on the previously introduced gene expression network to which we add now two control inputs $u_1$ and $u_2$ as
\begin{equation}
\begin{array}{rcl}
      \dot{x}_1(t)&=&-\gamma_1x_1(t)+b_0+u_1(t)\\
      \dot{x}_2(t)&=&k_2x_1(t)-\gamma_2x_2(t)-u_2(t)x_2(t)\\
      y(t)&=&x_2(t).
\end{array}
\end{equation}
It can be shown that at stationarity we have that $$y^*=\dfrac{k_2(b_0+u_1^*)}{\gamma_1(\gamma_2+u_2^*)},$$ which implies that for all $r>0$, there will exist $u_1^*,u_2^*\ge0$ such that $y^*=r$. This demonstrates that this specific controller structure enables a broader class of admissible set-point values than the naAIC and niAIC admissible set-points by combining them. The dynamical model of the interconnection of that controller and the gene expression network \eqref{eq:AIC:gene_exp} is now given by
\begin{equation}
\begin{array}{rcl}
      \dot{x}_1(t)&=&-\gamma_1x_1(t)+b_0+k_iz_1(t)\\
      \dot{x}_2(t)&=&k_2x_1(t)-\gamma_2x_2-k_pz_2(t)x_2(t)\\
      \dot{z}_1(t)&=&\mu-\eta z_1(t)z_2(t)\\
      \dot{z}_2(t)&=&\theta y(t)-\eta z_1(t)z_2(t).
\end{array}
\end{equation}
Under some conditions, it is possible to show that $\lim_{t\to\infty}x_2(t)=\frac{\mu}{\theta}$ will hold under some stability conditions and with no restriction on the set-point (i.e. the admissible set is the set of nonnegative numbers). This is illustrated in Figure~\ref{fig:nAIRC}.}

\begin{figure}[h]
  \centering
  \includegraphics[width=0.45\textwidth]{./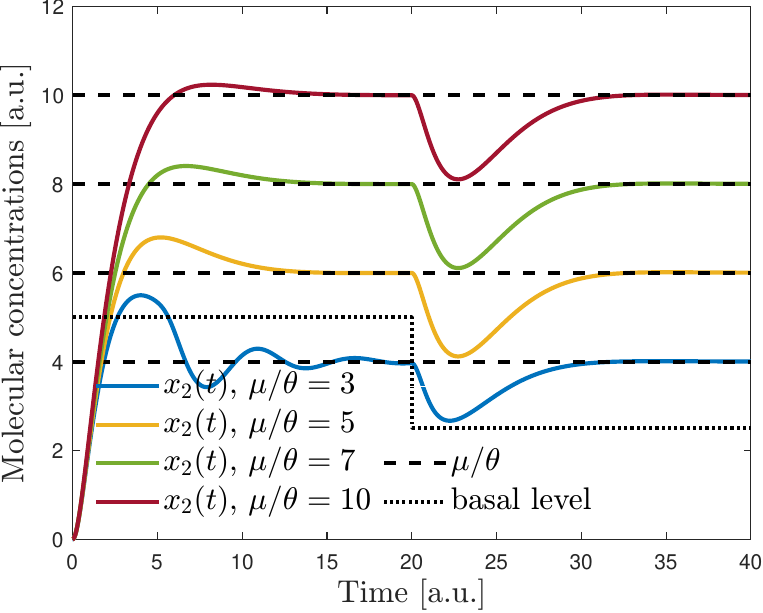}
  \caption{Simulation of the reaction network \eqref{eq:AIC:gene_exp}-\eqref{eq:AIC:AIRC} with the parameters $\gamma_1=1$, $\gamma_2=2$, $k_2=1$, $b_0=10$, $k_p=k_i=0.5$, $\eta=100$, $\theta=1$ and for various values for $\mu$. At time $t=30$, the value of $b_0$ is divided by 2. One can observe that the controlled species $\X{2}$ exhibits the robust perfect adaptation property for set-points that are both below and above the basal level, which illustrates the benefits of such a controller.}\label{fig:nAIRC}
\end{figure}
%
%
%
%
%

\section{Unimolecular mass-action networks}\label{sec:ptype:lin}

\subsection{The general problem and its properties}

\black{Let us consider a mass-action unimolecular reaction network $(\Xz,\mathcal{R})$ described by the following dynamical system
\begin{equation}\label{eq:mainsystL}
  \begin{array}{rcl}
    \dot{x}(t)&=&Ax(t)+b_0\\
    x(0)&=&x_0
  \end{array}
\end{equation}
where $x,x_0\in\mathbb{R}_{\ge0}^n$  are the state of the system and the initial condition, respectively. The matrix $A$ is Metzler (i.e. all its off-diagonal elements are nonnegative) while the vector $b_0$ is nonnegative. Connecting that network to the output aiAIRC network previously described yields the model
\begin{equation}\label{eq:mainsystCL}
  \begin{array}{rcl}
    \dot{x}(t)&=&Ax(t)+e_1k_iz_1(t)-e_n  x_n(t) k_pz_2(t)+b_0\\
    \dot{z}_1(t)&=&\mu-\eta z_1(t)z_2(t)\\
    \dot{z}_2(t)&=&\theta x_n (t)-\eta z_1(t)z_2(t)
  \end{array}
\end{equation}
where $z_1,z_2\in\mathbb{R}_{\ge0}$ are the states of the controller, and $\eta,k_p,k_i>0$ are the parameters of the controller. Again, the set-point for the equilibrium output is given by $r=\mu/\theta$. This setup is a generalization of the one considered in \cite{Gupta:19} where a less general class of systems was considered.}\\

%
%

%

\black{The proposition below states that all positive set-points are admissible for the system \eqref{eq:mainsystCL}:
\begin{proposition}\label{prop:eqpt}
  Assume that $A$ is Hurwitz stable, that $r=\mu/\theta>0$, and that $-e_n ^TA^{-1}e_1\ne0$. Then, the equilibrium point $(x^*,z_1^*,z_2^*)$ of the closed-loop system \eqref{eq:mainsystCL} is unique and nonnegative. It is, moreover, given by
  \begin{equation}
    \begin{array}{rcl}
      x^*&=&-A^{-1}\left(e_1k_iz_1^*-\dfrac{e_n  r k_p\mu}{\eta z_1^*}+b_0\right)\\
      z_2^*&=&\dfrac{\mu}{\eta z_1^*}
    \end{array}
  \end{equation}
  where $z_1^*$ is the unique positive root to the polynomial
  \begin{equation}\label{eq:P}
  P_1(z_1):=\eta g_1 k_iz_1^{2}+(g_0-r)\eta z_1-g_n  k_p\mu r,
\end{equation}
  where $r:=\mu/\theta$, $g_1:=-e_n ^TA^{-1}e_1$, $g_n :=-e_n ^TA^{-1}e_n $, and $g_0:=-e_n ^TA^{-1}b_0$.\\

  \noindent Alternatively, the equilibrium point for the controller species can be characterized as $z_1^*=\mu/(\eta z_2^*)$ where $z_2^*$ is the unique positive root of the polynomial
  \begin{equation}
  P_2(z_2) = -\eta g_n  k_p rz_2^{*2}+(g_0-r)\eta z_2^*+g_1 k_i\mu.
\end{equation}
\end{proposition}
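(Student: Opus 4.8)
The plan is to compute the equilibria of \eqref{eq:mainsystCL} directly, substitute them back to obtain a single self-consistency equation in one controller variable, and then reduce that equation to a scalar polynomial whose root structure is governed by the positivity properties of $A$. Setting the three right-hand sides of \eqref{eq:mainsystCL} to zero, the two controller equations give $\eta z_1^* z_2^* = \mu$ and $\theta x_n^* = \eta z_1^* z_2^*$, so that $x_n^* = \mu/\theta = r$ and $z_2^* = \mu/(\eta z_1^*)$ follow immediately; since $\mu > 0$, this already forces $z_1^*, z_2^* > 0$ on the nonnegative orthant. As $A$ is Hurwitz it is invertible, and solving the first (state) equation while substituting $x_n^* = r$ and $z_2^* = \mu/(\eta z_1^*)$ yields
\[
  x^* = -A^{-1}\left(e_1 k_i z_1^* - e_n r k_p z_2^* + b_0\right) = -A^{-1}\left(e_1 k_i z_1^* - \frac{e_n r k_p \mu}{\eta z_1^*} + b_0\right),
\]
which is the claimed expression.

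Next I would pin down $z_1^*$ by imposing the remaining consistency requirement $e_n^T x^* = x_n^* = r$. With $g_1 = -e_n^T A^{-1} e_1$, $g_n = -e_n^T A^{-1} e_n$ and $g_0 = -e_n^T A^{-1} b_0$ this reads
\[
  r = g_1 k_i z_1^* - \frac{g_n r k_p \mu}{\eta z_1^*} + g_0 ,
\]
and multiplying through by $\eta z_1^* > 0$ and collecting terms reproduces exactly $P_1(z_1^*) = 0$ with $P_1$ as in \eqref{eq:P}. Substituting instead $z_1^* = \mu/(\eta z_2^*)$ into the same relation and clearing denominators turns it into $P_2(z_2^*) = 0$, giving the alternative characterization.

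It then remains to show that $P_1$ (and likewise $P_2$) admits a unique positive root. The key ingredient is the standard fact that a Hurwitz Metzler matrix satisfies $-A^{-1} \ge 0$ entrywise with a strictly positive diagonal (e.g. via $-A^{-1} = \int_0^\infty e^{At}\,dt$ and the nonnegativity of $e^{At}$). Hence $g_n > 0$, $g_0 \ge 0$, and $g_1 \ge 0$, the last being strictly positive precisely by the standing assumption $-e_n^T A^{-1} e_1 \ne 0$. Consequently $P_1$ is a genuine quadratic with strictly positive leading coefficient $\eta g_1 k_i$ and strictly negative constant term $-g_n k_p \mu r$; its two roots have product $-g_n k_p \mu r/(\eta g_1 k_i) < 0$, so they are real and of opposite sign, leaving exactly one positive root and thus a unique equilibrium. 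The mirror-image sign pattern (negative leading coefficient, positive constant term) for $P_2$ gives its unique positive root in the same way.

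The step I expect to be the main obstacle is the sign bookkeeping underlying this last paragraph: everything hinges on $-A^{-1} \ge 0$ and on correctly tracking the signs of $g_1, g_n, g_0$ so that each quadratic has constant and leading terms of opposite sign. Once that is secured, the opposite-sign-roots conclusion is immediate. As a remark I would also note that rewriting the state equilibrium with the effective matrix $A - k_p z_2^* e_n e_n^T$, which is again Metzler and Hurwitz, shows $x^* \ge 0$, so that the unique equilibrium is genuinely nonnegative and every positive set-point is admissible.
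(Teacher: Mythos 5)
Your proposal is correct and follows essentially the same route as the paper: derive $x_n^*=r$ and $z_2^*=\mu/(\eta z_1^*)$ from the controller equations, solve the state equation via $A^{-1}$, impose $e_n^Tx^*=r$ to obtain the quadratic $P_1$, and use $g_1,g_n>0$ (from $-A^{-1}\ge0$ for a Metzler Hurwitz matrix) to conclude there is exactly one positive root. The only cosmetic difference is that you argue via the product of the roots being negative where the paper invokes Descartes' rule of signs on the single sign change in the coefficients; both are equivalent here, and your added remark on the nonnegativity of $x^*$ is a harmless bonus.
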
}
\black{\begin{proof}
  The equilibrium points solve the expressions
\begin{equation}\label{eq:jdskjdksdjk}
  \begin{array}{rcl}
    Ax^*+e_1k_iz_1^*-e_n  x_n ^* k_pz_2^*+b_0&=&0,\\
    \mu-\eta z_1^*z_2^*&=&0,\\
    \theta x_n ^*-\eta z_1^*z_2^*&=&0
  \end{array}
\end{equation}
which implies that $ z_1^*z_2^*\ne0$. Subtracting the two last rows in \eqref{eq:jdskjdksdjk} yields $x_n ^*=r$ and
  \begin{equation}
  \begin{array}{rcl}
    Ax^*+e_1k_iz_1^*-e_n  x_n^* k_p z_2^*+b_0&=&0,\\
    \mu-\eta z_1^*z_2^*&=&0.
  \end{array}
\end{equation}
This implies that $x^*$ can be written as
\begin{equation}
  x^*=-A^{-1}\left(e_1k_iz_1^*-e_n  k_p r z_2^*+b_0\right),
\end{equation}
which implies
\begin{equation}
\begin{array}{rcl}
    r&=&e_n^Tx^*\\
    &=&-e_n^TA^{-1}\left(e_1k_iz_1^*-e_n  r k_p z_2^*+b_0\right)\\
    &=& g_1k_iz_1^*-g_nr k_p z_2^*+g_0.
\end{array}
\end{equation}
Substituting $z_2^*=\mu/(\eta z_1^*)$ and multiplying the resulting expression by $\eta z_1^*$ yields
\begin{equation}
  \eta g_1 k_iz_1^{*2}+(g_0-r)\eta z_1^*-g_n  k_p\mu r=0\textnormal{ and }z_2^*=\mu/(\eta z_1^*).
\end{equation}
Using the fact that $g_1,g_n,r >0$ (see Lemma \ref{lemma:gains}), one can see that there is exactly one change of sign in the coefficients of the polynomial. From Descartes' rule of sign \cite{Henrici:88}, this implies that there exists one positive root to this polynomial. The alternative statement with $z_2^*$ is proven analogously.\\

We need to prove now that the equilibrium point is nonnegative. First of all, $z_1^*$ is positive, which implies that $z_2^*$ is positive as well. Noting now that the equilibrium state value $x^*$ can be written as
\begin{equation}
  x^*=-(A-k_pe_ne_n^Tz_2^*)^{-1}\left(e_1k_iz_1^*+b_0\right)
\end{equation}
where $z_1^*,z_2^*>0$. Using now the fact that $A$ is Hurwitz stable, then so is $A-k_pe_ne_n^Tz_2^*$ and, therefore, $(A-k_pe_ne_n^Tz_2^*)^{-1}\le0$ (see Proposition \ref{prop:MHS}). This implies that $x^*\ge0$ since $e_1k_iz_1^*+b_0\ge0$, which proves that the equilibrium point is nonnegative.  Since there are no assumptions on the value for the set-point $r$ besides its positivity, we can conclude that all positive set-points are admissible.
\end{proof}}

\black{The closed-loop network exhibits an interesting switching behavior in the large $\eta$ limit as stated in the following result:
\begin{proposition}\label{prop:switching}
  The following statements hold:
  \begin{enumerate}
  \item If $r>g_0$, then we have that $(z_1^*,z_2^*)\rarrow{\eta\to\infty}(u_1^*/k_i,0)$ where $u_1^*=(r-g_0)/g_1$.
  \item If $r<g_0$, then we have that $(z_1^*,z_2^*) \rarrow{\eta\to\infty} (0,u_2^*/k_p)$ where $u_2^*=(g_0-r)/(g_n  r)$.
  \item If $r=g_0$, then
  \begin{equation}
    z_1^*=\sqrt{\dfrac{g_n  k_p\mu r}{\eta g_1k_i}}\textnormal{ and }z_2^*=\sqrt{\dfrac{\theta g_1k_i}{\eta g_n  k_p}}
  \end{equation}
  and they both tend to 0 when $\eta\to\infty$ while keeping the product $\eta z_1^*z_2^*$ constant and equal to $\mu$.
\end{enumerate}
\end{proposition}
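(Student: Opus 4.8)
The plan is to read off the large-$\eta$ asymptotics of the unique positive equilibrium roots directly from the two explicit quadratics $P_1$ and $P_2$ furnished by Proposition \ref{prop:eqpt}, splitting into the three cases according to the sign of $g_0-r$. Since both polynomials are genuinely quadratic for every finite $\eta>0$ (their leading coefficients $\eta g_1 k_i$ and $-\eta g_n k_p r$ are nonzero), each positive root is available in closed form, and the whole argument amounts to controlling it as $\eta\to\infty$.

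I would dispose of the degenerate case $r=g_0$ first, as it is the cleanest: here the linear terms of both polynomials vanish, so $P_1$ collapses to $\eta g_1 k_i z_1^{*2}=g_n k_p\mu r$ and $P_2$ to $\eta g_n k_p r z_2^{*2}=g_1 k_i\mu$. Solving for the positive roots and rewriting $\mu/r=\theta$ (recall $r=\mu/\theta$) reproduces the two stated square-root expressions, each of order $\eta^{-1/2}$ and therefore vanishing as $\eta\to\infty$. The claim that $\eta z_1^* z_2^*$ stays equal to $\mu$ is not special to this regime: it is exactly the second equilibrium relation $\mu-\eta z_1^* z_2^*=0$ and hence holds for all $\eta$; I would note this and, as a consistency check, verify that the product of the two square-root expressions indeed returns $\mu$.

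For the two generic cases I would pass to the limit in the rescaled equations. Dividing $P_1(z_1^*)=0$ by $\eta$ gives $g_1 k_i z_1^{*2}+(g_0-r)z_1^*=g_n k_p\mu r/\eta$, whose right-hand side tends to $0$, so any limit $\bar z_1\ge0$ must satisfy $\bar z_1\,(g_1 k_i\bar z_1+g_0-r)=0$, leaving the two candidates $\bar z_1=0$ and $\bar z_1=(r-g_0)/(g_1 k_i)$, the latter nonnegative only when $r>g_0$. The analogous rescaling of $P_2$ leaves $\bar z_2=0$ or $\bar z_2=(g_0-r)/(g_n k_p r)$, the latter nonnegative only when $r<g_0$.

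The main obstacle is then to certify that the genuine positive root selects the nonzero branch rather than collapsing onto the spurious limit $0$, and I would settle this with Vieta's formulas. For $P_1$ the product of the roots is $-g_n k_p\mu r/(\eta g_1 k_i)<0$, so the roots always have opposite signs and exactly one is positive, while their sum is the $\eta$-independent constant $(r-g_0)/(g_1 k_i)$. When $r>g_0$ this sum is positive and the product tends to $0$, which forces the positive root to $(r-g_0)/(g_1 k_i)=u_*/k_i$ and the negative one to $0$; the companion species then follows from $z_2^*=\mu/(\eta z_1^*)\to0$, yielding Case 1. When $r<g_0$ the same reasoning applied to $P_2$ (product $-g_1 k_i\mu/(\eta g_n k_p r)<0$, sum $(g_0-r)/(g_n k_p r)>0$) drives $z_2^*\to(g_0-r)/(g_n k_p r)=u_*/k_p$ and hence $z_1^*=\mu/(\eta z_2^*)\to0$, yielding Case 2. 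Equivalently, one could substitute the explicit quadratic-formula root and expand $\sqrt{(g_0-r)^2\eta^2+4\eta g_1 k_i g_n k_p\mu r}$ for large $\eta$ to read off the same limits; I would keep whichever presentation is shorter.
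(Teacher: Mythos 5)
Your proof is correct and follows essentially the same route as the paper, which simply lets $\eta\to\infty$ in the quadratics $P_1$ and $P_2$ from Proposition \ref{prop:eqpt} and reads off the limit of the positive root (treating the other cases ``analogously''). Your version is in fact more careful than the paper's one-line argument: the Vieta's-formula step (constant sum of roots, product of opposite signs tending to $0$) is exactly what is needed to certify that the unique positive root converges to the nonzero branch rather than to $0$, and the observation that $\eta z_1^*z_2^*=\mu$ holds identically from the equilibrium relation is a nice touch.
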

\begin{proof}
  Assuming that $r>g_0$ and letting $\eta\to\infty$ in \eqref{eq:P} make its positive root go to $z_1^*=u_1^*/k_i$. The other statements are proven analogously.
\end{proof}}

\begin{figure}
  \centering
  \includegraphics[width=0.45\textwidth]{./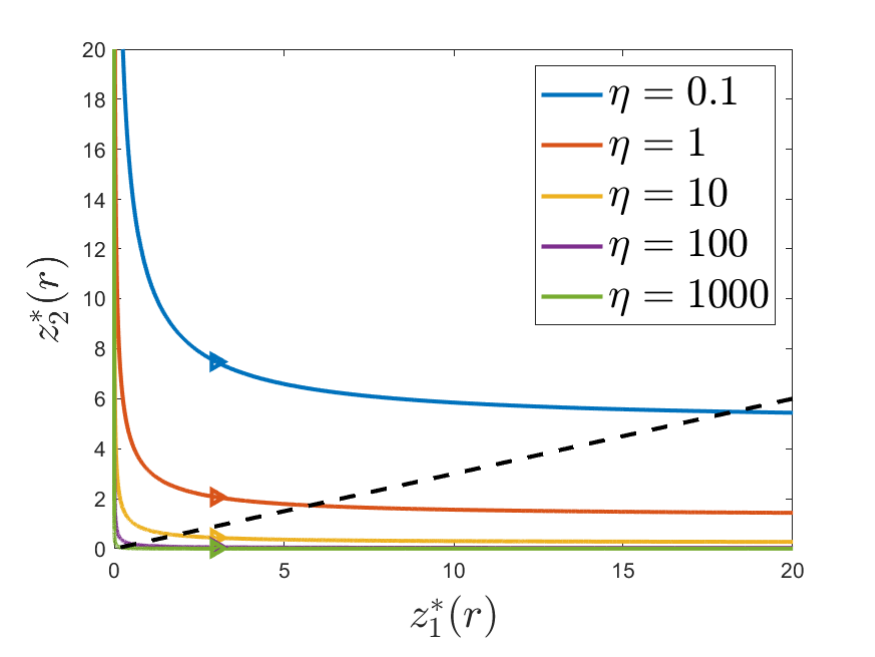}
  \caption{Illustration of the switching behavior described in Proposition \ref{prop:switching}. The curves describe the equilibrium values $(z_1^*(r),z_2^*(r))$ of the controller species parameterized in terms of the set-point $r$. Each curve corresponds to a specific value for $\eta$. The dashed line corresponds to the values of $(z_1^*(g_0),z_2^*(g_0))$. The values above that line corresponds to the case $r<g_0$ whereas the values below to the case $r>g_0$. One can observe that as $\eta$ increase we converge to a switching behavior where one of the equilibrium concentrations is zero, except in the particular case $r=g_0$ (not visible).}\label{fig:switching}
\end{figure}

\black{This result can be interpreted as follows: when the set-point $r$ is set beyond the basal level $g_0$ and $\eta$ is large enough, the naAIC part of the controller dominates its niAIC part whereas the opposite holds whenever the set-point is set below the basal level $g_0$. When the set-point is exactly equal to the basal level -- a rather pathological case -- both components act simultaneously in a rather fragile and unstable manner. This switching behavior is illustrated in Figure \ref{fig:switching} where one can observe the evolution of the equilibrium values of the states of the controller for different values for $\eta$ as the set-point $r$ increases.}\\

\black{Interestingly this result can be connected to an optimization problem at steady-state. To show this, consider the following adaptation of the system \eqref{eq:mainsystLu} to which we add two control inputs $u_1,u_2$ to mimic how the rein controller acts on the system in \eqref{eq:mainsystCL}:
\begin{equation}\label{eq:mainsystLu}
  \begin{array}{rcl}
    \dot{x}(t)&=&Ax(t)+b_0+e_1u_1(t)-e_nx_nu_2(t)\\
    x(0)&=&x_0.
  \end{array}
\end{equation}
Assuming that $A$ is Hurwitz stable, the steady-state relationship between the inputs and the output is given by
\begin{equation}
  y^*=g_0+g_1u_1-g_ny^*u_2,
\end{equation}
from which is immediate to see that for any given $y^*>0$, there is an infinite number of pairs $(u_1,u_2)\in\mathbb{R}_{\ge0}^2$ that satisfy this equation. Therefore, it is natural to seek to find a pair that minimize the overall control effort that leads to that output $y^*$. This problem can be formulated as the following linear programming problem
\begin{equation}
\begin{array}{rl}
  \min_{(u_1,u_2)\in\mathbb{R}^2}& u_1+u_2\\
  \mathrm{s.t.} & y^*=g_0+g_1u_1-g_ny^*u_2\\
                            & u_1,u_2\ge0.
\end{array}
\end{equation}
The unique solution to that problem is given by
\begin{equation}\label{eq:limit_solution}
  (u_1^*,u_2^*)=\left\{\begin{array}{ccl}
        \left(\dfrac{y^*-g_0}{g_1},0\right), &&\textnormal{if }y^*>g_0\\
        \left(0,\dfrac{g_0-y^*}{g_1y^*}\right), &&\textnormal{if }y^*<g_0\\
        \left(0,0\right), &&\textnormal{if }y^*=g_0
  \end{array}\right.
\end{equation}
and coincides with the conclusions of Proposition \ref{prop:switching}. Interestingly, the output rein controller can be shown to solve a relaxed version of the above problem by restricting the solution to lie on a surface of the form $u_1u_2=c$, where $c\ge0$. To show this, consider the feasibility problem
\begin{equation}
\begin{array}{rl}
  \mathrm{find}& (u_1,u_2)\in\mathbb{R}^2\\
  \mathrm{s.t.} & y^*=g_0+g_1u_1-g_ny^*u_2,\\
                            & u_1u_2=c,\\
                            &  u_1,u_2\ge0.
\end{array}
\end{equation}
which has the unique solution
\begin{equation}
  u_1^*=\dfrac{g_0-y^*+\sqrt{(y^*-g_0)^2+4g_1g_ny^*c}}{2g_1},\ u_2^*=\dfrac{c}{u_1^*}.
\end{equation}
When $c\to0$, which is analogous to $\eta\to\infty$, the solution of the above problem converges to \eqref{eq:limit_solution}. In this case, the parameter $c$ (or $\eta$, equivalently) allows one to tune the optimality of the choice for the input.}\\

The discussions above show that in the case of large sequestration parameter, the output rein controller either behaves like an naAIC or an niAIC depending on the value of the set-point. The class of naAIC's has been extensively studied since its introduction \cite{Briat:15e,Qian:18,Briat:19:Logistic,Olsman:19,Olsman:19b} and there is no reason to analyze it further here. On the other hand, the output niAIC is a structure that has not been thoroughly analyzed so far.

\subsection{Reduced problem - The niAIC with output inhibition}

\black{The objective of this section is to provide a thorough analysis of the output niAIC part of the considered rein controller with output inhibition
\begin{equation}\label{eq:AIC:niAIC_output}
  \phib\rarrow{\mu}\Z{1},\ \Yz\rarrow{\theta}\Yz+\Z{2},\ \Z{1}+\Z{2}\rarrow{k_p\eta}\phib,\  \Yz+\Z{2}\rarrow{k_p}\Z{2}
\end{equation}
where $\Z{1}$ is the reference species, $\Z{2}$ is both the sensing and actuating species, $\Yz$ is the measured, controlled and actuated species. The parameters $\mu,\theta,\eta,k_p$ are all positive rate parameters of the reactions, which are all assumed to be mass-action. The interconnection of this network with the (unimolecular) network to be controlled \eqref{eq:RN} yields the dynamical model
\begin{equation}\label{eq:mainsystCL2}
  \begin{array}{rcl}
    \dot{x}(t)&=&Ax(t)-e_n  x_n (t) k_pz_2(t)+b_0\\
    \dot{z}_1(t)&=&\mu-k_p\eta z_1(t)z_2(t)\\
    \dot{z}_2(t)&=&\theta x_n (t)-k_p\eta z_1(t)z_2(t)
  \end{array}
\end{equation}
where we have slightly changed the structure of the AIC by letting $\eta\leftarrow \eta k_p$. This modification does not change the nature of the results while simplifying their derivation; see e.g. \cite{Briat:19:Logistic}. The following result states under what condition the set-point is admissible:
\begin{proposition}\label{prop:eqpt}
  Assume that $A$ is Hurwitz stable. Then, the equilibrium point $(x^*,z_1^*,z_2^*)$ of the closed-loop system \eqref{eq:mainsystCL2} given by
  \begin{equation}
    (x^*,z_1^*,z_2^*)=\begin{pmatrix}
      -A^{-1}(-e_nru_*+b_0), & \dfrac{\mu}{\eta u_*}, & \dfrac{u_*}{k_p}
    \end{pmatrix},\ u_*=\dfrac{g_0-r}{g_n  r}
  \end{equation}
  is unique and positive if and only if $r<g_0$.
 \end{proposition}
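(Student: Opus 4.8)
The plan is to compute the equilibrium explicitly from the closed-loop equations and then read off uniqueness and the sign conditions separately. First I would set the right-hand sides of \eqref{eq:mainsystCL2} to zero. The two controller equations give $k_p\eta z_1^*z_2^*=\mu$ and $\theta x_n^*=k_p\eta z_1^*z_2^*$, so equating the two yields the perfect-adaptation identity $x_n^*=\mu/\theta=r$. This is the key structural feature of integral control: it forces the regulated component of the state to equal the set-point independently of the controller parameters, and it is what lets me eliminate $x_n^*$ from the plant equation.

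Next I would introduce the lumped equilibrium input $u_*:=k_pz_2^*$ and rewrite the plant equilibrium $Ax^*-e_nx_n^*k_pz_2^*+b_0=0$ as $Ax^*-e_nru_*+b_0=0$, whence $x^*=-A^{-1}(-e_nru_*+b_0)$, which is well-defined since $A$ is Hurwitz and hence invertible. The only remaining unknown is $u_*$, which I would pin down by imposing the output constraint $e_n^Tx^*=r$. Using $e_n^TA^{-1}e_n=-g_n$ and $e_n^TA^{-1}b_0=-g_0$, this collapses to the affine equation $r=-g_nru_*+g_0$, and since $g_n,r>0$ it has the unique solution $u_*=(g_0-r)/(g_nr)$. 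In contrast with the full AIRC analysis carried out previously, removing the n-type production term eliminates the $g_1k_iz_1$ contribution and turns the defining relation from a quadratic into a linear one, so uniqueness is immediate and no Descartes-type sign-counting is needed. Back-substituting gives $z_2^*=u_*/k_p$ and, from $k_p\eta z_1^*z_2^*=\mu$, $z_1^*=\mu/(\eta u_*)$, reproducing the claimed formulas.

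It then remains to characterize when this unique equilibrium lies in the positive orthant. The controller components are transparent: $z_1^*=\mu/(\eta u_*)$ and $z_2^*=u_*/k_p$ are finite and strictly positive precisely when $u_*>0$, which by $g_n,r>0$ is equivalent to $g_0-r>0$, i.e. $r<g_0$; when $r\ge g_0$ we have $u_*\le0$, and in fact no finite controller equilibrium exists at $r=g_0$. The delicate point, which I expect to be the main obstacle, is the nonnegativity of the plant state $x^*$, since the expression $x^*=-A^{-1}(-e_nru_*+b_0)$ involves the sign-indefinite vector $-e_nru_*+b_0$ and the naive bound $-A^{-1}\ge0$ is not enough on its own. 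Here I would exploit the positive-systems structure: using $x_n^*=e_n^Tx^*$, the degradation term is absorbed into the dynamics matrix, so the equilibrium reads $(A-u_*e_ne_n^T)x^*=-b_0$. For $u_*>0$ the matrix $A-u_*e_ne_n^T$ is still Metzler (only the $(n,n)$ diagonal entry is lowered) and satisfies $A-u_*e_ne_n^T\le A$ entrywise; by the monotonicity of the spectral abscissa on Metzler matrices, its spectral abscissa is bounded above by that of the Hurwitz matrix $A$, hence it is itself Hurwitz. Consequently $-(A-u_*e_ne_n^T)^{-1}\ge0$ entrywise, and since $b_0\ge0$ we conclude $x^*=-(A-u_*e_ne_n^T)^{-1}b_0\ge0$. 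Combining the three sign conditions shows that the equilibrium is positive if and only if $r<g_0$, which together with the uniqueness established above completes the argument.
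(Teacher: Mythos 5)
Your proof is correct and follows essentially the same route as the paper's: set the right-hand sides of \eqref{eq:mainsystCL2} to zero, deduce $x_n^*=r$ from the two controller equations, solve the affine output constraint $r=-g_nru_*+g_0$ for the unique $u_*$, and read off positivity of $z_1^*$ and $z_2^*$ from the sign of $u_*$. The one place you go beyond the paper is the nonnegativity of $x^*$, which the paper dispatches with ``the positivity of the equilibrium point simply follows from that of $u_*$,'' whereas you correctly note that $(A-u_*e_ne_n^T)x^*=-b_0$ with $A-u_*e_ne_n^T$ Metzler and entrywise dominated by the Hurwitz matrix $A$, hence itself Hurwitz with nonpositive inverse --- a worthwhile tightening that relies on the standing Metzler assumption on $A$.
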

\begin{proof}
The equilibrium points solve the following system of equations
\begin{equation}\label{eq:dskldskdls;dsld;}
  \begin{array}{rcl}
    0&=&Ax^*-e_n  x_nx^*k_pz_2x^*+b_0\\
    0&=&\mu-k_p\eta z_1x^*z_2x^*\\
    0&=&\theta x_n^*-k_p\eta z_1x^*z_2x^*.
  \end{array}
\end{equation}
From the two last rows, we obtain that $x_n^*=r$, $z_1^*=\mu/(k_p\eta z_2^*)$, and hence
  \begin{equation}
    Ax^*-e_nrk_pz_2^*+b_0=0.
  \end{equation}
  Therefore, we get
  \begin{equation}\label{eq:xstarCL2}
    x^*=-A^{-1}(-e_nrk_pz_2^*+b_0)
  \end{equation}
  which implies that $r=-g_nrk_pz_2^*+g_0$ and
  \begin{equation}
    z_2^*=\dfrac{g_0-r}{rk_pg_n}=\dfrac{u_*}{k_p}.
  \end{equation}
  Substituting this into the expression $z_1^*=\mu/(k_p\eta z_2^*)$ and \eqref{eq:xstarCL2}  yields $z_1^*=\mu/(\eta u_*)$ and the final expression for the unique equilibrium point.\\

  For $z_2^*$ to be positive, it is necessary and sufficient to have $g_0-r>0$, which then implies that $z_1^*$ is also positive. The first row of \eqref{eq:dskldskdls;dsld;} can be written as $(A-u^*e_ne_n^T)x^*+b_0=0$, which implies that $x^*=-(A-u^*e_ne_n^T)^{-1}b_0$ which implies that $x^*\ge0$ since $b_0\ge0$ and $A-u^*e_ne_n^T$ is Metzler and Hurwitz stable, as $A$ is Hurwitz stable by assumption. The proof is completed.
\end{proof}}


\subsection{Existence results for the niAIC with output inhibition}

The following result states that the equilibrium point of the closed-loop network \eqref{eq:mainsystCL2} is locally exponentially stable for any sufficiently small $k_p>0$.
\black{\begin{proposition}\label{prop:small_kp}
  Assume that $\bar A=A-e_n  e_n ^Tu_*$ is Metzler and Hurwitz stable, and let $\eta,\mu,\theta>0$ be given and such that $r<g_0$. Then, the equilibrium point of the closed-loop network \eqref{eq:mainsystCL2} is locally exponentially stable for any sufficiently small $k_p>0$.
\end{proposition}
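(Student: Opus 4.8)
The plan is to read the matrix in \eqref{eq:A} as a one-parameter perturbation in $k_p$ and to track the single eigenvalue that sits at the origin when $k_p=0$. Writing that matrix as $J(k_p)=J_0+k_pJ_1$, where $J_0$ collects the $k_p$-independent entries and $J_1=\begin{bmatrix} 0 & 0 & -e_nr\\ 0 & 0 & -\mu/u_*\\ 0 & 0 & -\mu/u_*\end{bmatrix}$, I would first compute the spectrum of $J_0$. Since the $\tilde z_2$-column of $J_0$ vanishes, expanding $\det(sI-J_0)$ along that column gives $\det(sI-J_0)=s\,(s+\eta u_*)\det(sI-\bar A)$. Hence the eigenvalues of $J_0$ are $0$, the eigenvalues of $\bar A$, and $-\eta u_*$. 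Under the hypotheses $\bar A$ is Hurwitz and $u_*=(g_0-r)/(g_nr)>0$ (because $r<g_0$, $g_n>0$, $r>0$), so $0$ is a \emph{simple} eigenvalue of $J_0$ and every other eigenvalue lies in the open left half-plane; simplicity holds because $0$ is neither an eigenvalue of $\bar A$ nor equal to $-\eta u_*$.

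The crux is then to determine in which direction this simple zero eigenvalue moves as $k_p$ increases from $0$. Let $\lambda(k_p)$ denote the branch with $\lambda(0)=0$. Standard first-order eigenvalue perturbation gives $\lambda'(0)=w^TJ_1v/(w^Tv)$, where $v$ and $w$ are respectively right and left eigenvectors of $J_0$ for the eigenvalue $0$. A direct computation yields $v=e_n$-type direction in the $\tilde z_2$-slot, i.e. $v=(0,\ldots,0,0,1)^T$, and $w$ stacked as $(-\bar A^{-T}e_n;\,-1;\,1)$, normalized so that $w^Tv=1$. Substituting $J_1v$ (the third column of $J_1$) then makes the two $\mu/u_*$ contributions cancel and leaves $\lambda'(0)=r\,e_n^T\bar A^{-1}e_n=r\,(\bar A^{-1})_{nn}$.

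The final step is to fix the sign of this quantity using positivity theory. Since $\bar A$ is Metzler and Hurwitz, $-\bar A^{-1}=\int_0^\infty e^{\bar At}\,dt\ge0$ entrywise, and its $(n,n)$ entry is strictly positive because $t\mapsto(e^{\bar At})_{nn}$ is nonnegative with value $1$ at $t=0$; hence $(\bar A^{-1})_{nn}<0$. As $r>0$, we conclude $\lambda'(0)<0$, so the origin eigenvalue enters the open left half-plane for small $k_p>0$. Because eigenvalues depend continuously on $k_p$ and the remaining eigenvalues of $J_0$ already lie in the open left half-plane, there exists $k_p^\star>0$ such that $J(k_p)$ is Hurwitz for all $k_p\in(0,k_p^\star)$, which is exactly local exponential stability of the equilibrium. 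The main obstacle is the sign determination of $\lambda'(0)$: it rests entirely on the Metzler--Hurwitz property of $\bar A$ forcing $(\bar A^{-1})_{nn}<0$, together with the verification that the origin is a simple eigenvalue so that the scalar perturbation formula legitimately governs the motion of the branch.
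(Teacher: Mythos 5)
Your proposal is correct and follows essentially the same route as the paper: the same splitting $J(k_p)=J_0+k_pJ_1$, the same left/right eigenvectors for the simple zero eigenvalue, the same first-order formula giving $\lambda'(0)=r\,e_n^T\bar A^{-1}e_n$, and the same Metzler--Hurwitz argument forcing $(\bar A^{-1})_{nn}<0$. Your explicit verification that $0$ is a simple eigenvalue of $J_0$ (via $\det(sI-J_0)=s(s+\eta u_*)\det(sI-\bar A)$) and the integral representation $-\bar A^{-1}=\int_0^\infty e^{\bar At}\,dt$ are welcome refinements of steps the paper states without detail.
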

\begin{proof}
The linearized dynamics of the system about that equilibrium point is given by
\begin{equation}\label{eq:A}
  \begin{bmatrix}
    \dot{\tilde{x}}(t)\\
    \dot{\tilde{z}}_1(t)\\
    \dot{\tilde{z}}_2(t)
  \end{bmatrix}=\begin{bmatrix}
    \bar{A} & 0 & -e_n  k_pr\\
    0 & -\eta u_* & -\mu k_p/u_*\\
    \theta e_n ^T & -\eta u_* & -\mu k_p/u_*
  \end{bmatrix}\begin{bmatrix}
    \tilde{x}(t)\\
    \tilde{z}_1(t)\\
    \tilde{z}_2(t)
  \end{bmatrix}
\end{equation}
where $\bar A:=A-e_n  e_n ^Tu_*$. The matrix of the linearized dynamics can be decomposed as
\begin{equation}
  \begin{bmatrix}
    \bar{A} & 0 & 0\\
    0 & -\eta u_* & 0\\
   \theta  e_n ^T & -\eta u_* & 0
  \end{bmatrix}+k_p\begin{bmatrix}
    0 & 0 & -e_n  r\\
    0 & 0 & -\mu /u_*\\
    0 & 0 & -\mu /u_*
  \end{bmatrix}.
\end{equation}
The matrix to the left is marginally stable with one eigenvalue at 0. To show the existence of a $k_p$ that makes the matrix in \eqref{eq:A} Hurwitz stable, we use a perturbation argument \cite{Seyranian:03} and check at what happens when we slightly increases $k_p$ from 0 to (small) positive values. The normalized left- and right-eigenvectors associated with the zero eigenvalue are given by
\begin{equation}
  u = \begin{bmatrix}
    -\theta e_n ^T\bar{A}^{-1} & -1 & 1
  \end{bmatrix}^T\ \textnormal{and}\ v = \begin{bmatrix}
    0 & 0 & 1
  \end{bmatrix}^T.
\end{equation}
Therefore, the zero eigenvalue bifurcates according to the expression \cite{Seyranian:03}
\begin{equation}
  \lambda_0(k_p)=0+k_pu^T\begin{bmatrix}
    0 & 0 & -e_n  r\\
    0 & 0 & -\mu /u_*\\
    0 & 0 & -\mu /u_*
  \end{bmatrix}v+o(k_p)= k_p\mu e_n^T\bar{A}^{-1}e_n+o(k_p).
\end{equation}
Since $\bar A$ is Metzler and Hurwitz stable, then $\bar{A}^{-1}\le0$ with negative diagonal entries, and we have that $e_n^T\bar{A}^{-1}e_n<0$. Therefore, the zero eigenvalue moves inside the open left half-plane when positively perturbing $k_p$ from the 0 value, which implies that for any sufficiently small $k_p>0$ the matrix in \eqref{eq:A} is Hurwitz stable.
\end{proof}}

Similarly, the following result states that the equilibrium point of the closed-loop network \eqref{eq:mainsystCL2} is locally exponentially stable for any sufficiently small $\eta>0$.
\black{\begin{proposition}\label{prop:small_eta}
 Assume that $\bar A=A-e_n  e_n ^Tu_*$ is Metzler and Hurwitz stable, and let $\eta,\mu,\theta>0$ be given and such that $r<g_0$. Then, the equilibrium point of the closed-loop network \eqref{eq:mainsystCL2} is locally exponentially stable for any sufficiently small $\eta>0$.
\end{proposition}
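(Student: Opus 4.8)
The plan is to mirror the perturbation argument of Proposition \ref{prop:small_kp}, but now treating $\eta$ (rather than $k_p$) as the small parameter. Writing the Jacobian \eqref{eq:A} as $J(\eta)=J_0+\eta J_1$ with
\begin{equation*}
J_0=\begin{bmatrix} \bar A & 0 & -e_nk_pr\\ 0 & 0 & -\mu k_p/u_*\\ e_n^T & 0 & -\mu k_p/u_*\end{bmatrix},\qquad J_1=\begin{bmatrix} 0 & 0 & 0\\ 0 & -u_* & 0\\ 0 & -u_* & 0\end{bmatrix},
\end{equation*}
I observe that the column of $J_0$ associated with $\tilde z_1$ vanishes, so that $e_{n+1}$ spans its kernel and $0$ is an eigenvalue of $J_0$. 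After reordering the states as $(\tilde x,\tilde z_2,\tilde z_1)$, the matrix $J_0$ becomes block lower-triangular with diagonal blocks $M:=\begin{bmatrix} \bar A & -e_nk_pr\\ e_n^T & -\mu k_p/u_*\end{bmatrix}$ and $0$; hence the spectrum of $J_0$ is $\{0\}\cup\sigma(M)$. The first task is therefore to prove that $M$ is Hurwitz stable, which will in turn show that $0$ is a \emph{simple} eigenvalue of $J_0$ and that $J_0$ is marginally stable.

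The stability of $M$ is the main obstacle, since $M$ is no longer Metzler (the block $-e_nk_pr$ carries a negative off-diagonal entry) and positive-systems arguments do not apply directly. I would instead read $M$ as the state matrix of the negative feedback interconnection of the co-located transfer function $G(s):=e_n^T(sI-\bar A)^{-1}e_n$ (scaled by $k_pr>0$) with the first-order lag $H_c(s):=1/(s+\mu k_p/u_*)$, whose characteristic equation is $1+k_pr\,G(s)H_c(s)=0$. The lag $H_c$ is plainly SPR, hence WSPR. For $G$, I would invoke Proposition \ref{prop:LMI}: since $\bar A$ is Metzler and Hurwitz it admits a diagonal Lyapunov certificate, and after scaling its $n$-th diagonal entry to $1$ one obtains $P=\diag(d_1,\dots,d_{n-1},1)\succ0$ satisfying $Pe_n-e_n=0$ and $\bar A^TP+P\bar A+2\eps e_ne_n^T\prec0$ for all sufficiently small $\eps>0$; together with Lemma \ref{lemma:zeros} (which guarantees $G$ has no imaginary-axis zeros), this yields that $G$ is WSPR, hence PR. Theorem \ref{th:interconnection} then gives internal stability of the interconnection, i.e. $M$ is Hurwitz.

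With $J_0$ marginally stable, I would finish via the first-order eigenvalue expansion $\lambda_0(\eta)=\eta\,(u^TJ_1v)/(u^Tv)+o(\eta)$, where $v=e_{n+1}$ and the left eigenvector $u=(u_x,u_{z_1},u_{z_2})$ solves $u^TJ_0=0$. Solving this system gives $u_x^T=-u_{z_2}e_n^T\bar A^{-1}$ and the scalar relation $u_{z_1}+u_{z_2}=r a u_* u_{z_2}/\mu$ with $a:=e_n^T\bar A^{-1}e_n<0$ (negativity coming from $\bar A^{-1}\le0$ with negative diagonal). Substituting $u^TJ_1v=-u_*(u_{z_1}+u_{z_2})$ and $u^Tv=u_{z_1}$, the derivative reduces to
\begin{equation*}
\left.\frac{d\lambda_0}{d\eta}\right|_{\eta=0}=\frac{-r a u_*^2}{r a u_*-\mu}.
\end{equation*}
Since $a<0$ and $r,u_*,\mu>0$, the numerator is positive and the denominator negative, so this derivative is strictly negative (the same computation also shows $u^Tv=u_{z_1}\ne0$, confirming simplicity). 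Thus the zero eigenvalue moves into the open left half-plane as $\eta$ increases from $0$, and by continuity of the spectrum the full matrix $J(\eta)$ in \eqref{eq:A} is Hurwitz stable for all sufficiently small $\eta>0$, which establishes local exponential stability of the equilibrium point.
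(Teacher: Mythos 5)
Your proof is correct and follows the same skeleton as the paper's: split the Jacobian as $J_0+\eta J_1$, observe that the $\tilde z_1$-column of $J_0$ vanishes, and track the zero eigenvalue to first order in $\eta$. You go further than the paper in two places, both to your credit. First, the paper simply \emph{asserts} that $J_0$ is marginally stable with a single zero eigenvalue; this requires the block $M=\begin{bmatrix}\bar A & -e_nk_pr\\ e_n^T & -\mu k_p/u_*\end{bmatrix}$ to be Hurwitz, which is not obvious since $M$ is not Metzler (indeed, Proposition \ref{prop:large_eta} takes the Hurwitz stability of essentially this block as an extra \emph{hypothesis}). Your proof of that fact via the feedback interpretation, Proposition \ref{prop:LMI}, Lemma \ref{lemma:zeros} and Theorem \ref{th:interconnection} is sound and fills a real gap. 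Second, your eigenvalue derivative $-rau_*^2/(rau_*-\mu)$ with $a=e_n^T\bar A^{-1}e_n<0$ is the correct value: the paper's claim that it equals $-u_*$ tacitly assumes the left null vector satisfies $u_{z_1}+u_{z_2}=1$, which is incompatible with the actual null-space equations (they force $u_{z_1}+u_{z_2}=rau_*u_{z_2}/\mu$ with $u_{z_2}\neq0$). The sign, and hence the conclusion, is the same either way, but your computation is the one that actually holds.
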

\begin{proof}
Similarly, one can rewrite  the matrix in \eqref{eq:A} as
\begin{equation}
  \begin{bmatrix}
    \bar{A} & 0 & -e_n  k_pr\\
    0 & 0 & -\mu k_p/u_*\\
    \theta e_n ^T & 0 & -\mu k_p/u_*
  \end{bmatrix}+\eta\begin{bmatrix}
    0 & 0 & 0\\
    0 & - u_* & 0\\
    0 & -u_* & 0
  \end{bmatrix}.
\end{equation}
Again the matrix on the left is marginally stable with one eigenvalue at zero. The normalized left- and right-eigenvectors associated with the zero eigenvalue are given by
\begin{equation}
  u = \begin{bmatrix}
    * & 1 & *
  \end{bmatrix}^T\ \textnormal{and}\ v = \begin{bmatrix}
    0 & 1 & 0
  \end{bmatrix}^T
\end{equation}
where $*$ means that the entries are unimportant here. The zero eigenvalue  bifurcates according to the expression \cite{Seyranian:03}
\begin{equation}
  \lambda_0(\eta)=0+\eta u^T\begin{bmatrix}
  0 & 0 & 0\\
    0 & - u_* & 0\\
    0 & -u_* & 0
  \end{bmatrix}v+o(k_p)= -u_*\eta +o(\eta).
\end{equation}
Since $r<g0$, then we have that $u^*>0$ and, as a result, the matrix in \eqref{eq:A} is Hurwitz stable for any sufficiently small $\eta>0$.
\end{proof}}

Conversely, the following result states that the equilibrium point of the closed-loop network \eqref{eq:mainsystCL2} is locally exponentially stable for any sufficiently large $\eta>0$ provided that an extra condition is met.
\black{\begin{proposition}\label{prop:large_eta}
 Assume that $\bar A=A-e_n  e_n ^Tu_*$ is Metzler and Hurwitz stable, and let $\eta,\mu,\theta>0$ be given and such that $r<g_0$. Assume further that the matrix
 \begin{equation}
   \begin{bmatrix}
    \bar{A} & -e_n  k_p\mu\\
    e_n ^T & 0
  \end{bmatrix}
 \end{equation}
 is Hurwitz stable.  Then, the equilibrium point of the closed-loop network \eqref{eq:mainsystCL2} is locally exponentially stable for any sufficiently large $\eta>0$.
\end{proposition}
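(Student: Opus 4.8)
The plan is to recognize the linearized matrix \eqref{eq:A} as a singularly perturbed matrix with small parameter $\varepsilon:=1/\eta$, whose slow part is governed by the very matrix assumed Hurwitz in the statement and whose fast part is trivially stable. First I would apply the invertible change of variables $w:=\tilde z_2-\tilde z_1$, keeping $\tilde x$ and $\tilde z_1$. This is a similarity transformation, so it preserves the spectrum, and subtracting the $\dot{\tilde z}_1$ equation from the $\dot{\tilde z}_2$ equation cancels both $\eta$-dependent terms, leaving $\dot w=e_n^T\tilde x$ and isolating $\tilde z_1$ as the unique fast coordinate. In the reordered coordinates $(\tilde x,w,\tilde z_1)$ the matrix becomes
\[
\tilde J(\eta)=\begin{bmatrix}
\bar A & -e_n k_p r & -e_n k_p r\\
e_n^T & 0 & 0\\
0 & -\mu k_p/u_* & -(\eta u_*+\mu k_p/u_*)
\end{bmatrix}=\begin{bmatrix} F & G\\ H & d(\eta)\end{bmatrix},
\]
where $F:=\begin{bmatrix}\bar A & -e_n k_p r\\ e_n^T & 0\end{bmatrix}$ is exactly the matrix assumed Hurwitz, $G:=\begin{bmatrix}-e_n k_p r\\ 0\end{bmatrix}$, $H:=\begin{bmatrix}0 & -\mu k_p/u_*\end{bmatrix}$, and $d(\eta):=-(\eta u_*+\mu k_p/u_*)$.

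Next I would extract the characteristic polynomial by taking a Schur complement on the scalar block $d(\eta)$ and applying the matrix determinant lemma (valid since $GH$ has rank one), which gives the exact factorization
\[
\det(sI-\tilde J(\eta))=p_F(s)\bigl[s-d(\eta)-H(sI-F)^{-1}G\bigr]=(s-d(\eta))\,p_F(s)-N(s),
\]
where $p_F(s):=\det(sI-F)$ is monic of degree $n+1$ with all roots in the open left half-plane by hypothesis, and $N(s):=p_F(s)\,H(sI-F)^{-1}G$ is a polynomial of degree at most $n$ because $H(sI-F)^{-1}G$ is strictly proper.

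The concluding step is a continuity-of-roots argument as $\eta\to\infty$. Since $r<g_0$ gives $u_*>0$, we have $-d(\eta)=\eta u_*+\mu k_p/u_*\to+\infty$; dividing the characteristic polynomial by $-d(\eta)$ yields $\bigl(1-s/d(\eta)\bigr)p_F(s)-N(s)/(-d(\eta))\to p_F(s)$ uniformly on compact subsets of $\mathbb{C}$. Hence $n+1$ of the $n+2$ roots converge to the roots of $p_F$, all in the open left half-plane, and so remain there for $\eta$ large. The last root is pinned down by the trace: the coefficient of $s^{n+1}$ in the monic degree-$(n+2)$ polynomial is $c_n-d(\eta)$ (with $c_n$ the subleading coefficient of $p_F$), so the sum of all roots equals $d(\eta)-c_n\to-\infty$; as the other $n+1$ roots stay bounded, the remaining root has real part tending to $-\infty$ and thus lies in the open left half-plane for all sufficiently large $\eta$. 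Combining the two facts gives Hurwitz stability of $\tilde J(\eta)$, hence of \eqref{eq:A}.

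I expect the main obstacle to be the bookkeeping that isolates a single fast mode: without the change of variables, the perturbation $\eta E$ (with $E$ the rank-one matrix from the proof of Proposition \ref{prop:small_eta}) mixes the two controller rows, and it is not transparent that the $n+1$ surviving eigenvalues converge to the spectrum of the hypothesized $F$ rather than of some other reduced matrix. The transformation $w=\tilde z_2-\tilde z_1$ is precisely what makes the fast/slow splitting exact and identifies the reduced matrix with $F$; once this is in place the remaining estimates are routine. A secondary point requiring care is verifying that the escaping eigenvalue leaves along the negative real axis (i.e. that its real part, not merely its modulus, diverges to $-\infty$), which the trace computation settles.
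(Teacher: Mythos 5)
Your proof is correct, but it takes a genuinely different route from the paper's. The paper writes the Jacobian \eqref{eq:A} as $\eta\,(E_0+\varepsilon M_1)$ with $\varepsilon=1/\eta$, notes that $E_0$ has a semisimple zero eigenvalue of multiplicity $n+1$, and invokes first-order eigenvalue perturbation theory \cite{Seyranian:03}: the reduced matrix $u^TM_1v$ built from the left/right null spaces of $E_0$ is exactly the hypothesized matrix $F$, so the $n+1$ critical eigenvalues bifurcate into the open left half-plane. Your substitution $w=\tilde z_2-\tilde z_1$ is precisely the left-null direction $(0,-1,1)$ underlying the paper's reduction, but instead of quoting perturbation theory you make the slow/fast splitting exact and finish elementarily: Schur determinant formula on the scalar fast block, uniform convergence on compacta of the normalized characteristic polynomial to $p_F$, and a trace computation for the escaping root. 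What your version buys is self-containedness and rigor on a point the paper leaves implicit, namely the fate of the $(n+2)$-nd eigenvalue, which you pin down via the trace (the first-order bifurcation formula only tracks the $n+1$ critical eigenvalues, and the paper does not explicitly argue that the remaining fast eigenvalue stays in the left half-plane). What the paper's version buys is brevity and uniformity with Propositions \ref{prop:small_kp} and \ref{prop:small_eta}, which follow the same perturbation template. Two cosmetic remarks: the identity you need is just the Schur determinant formula on the scalar block, so the appeal to the matrix determinant lemma and to the rank of $GH$ is superfluous; and you should note explicitly that $N(s)=H\,\mathrm{adj}(sI-F)\,G$ does not depend on $\eta$, which is what makes the limit argument clean.
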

\begin{proof}
The matrix in \eqref{eq:A} can be rewritten as
\begin{equation}
  \dfrac{1}{\eps}\left(\begin{bmatrix}
    0 & 0 & 0\\
    0 & - u_* & 0\\
    0 & -u_* & 0
  \end{bmatrix}+\eps\begin{bmatrix}
    \bar{A} & 0 & -e_n  k_pr\\
    0 & 0 & -\mu k_p/u_*\\
    \theta e_n ^T & 0 & -\mu k_p/u_*
  \end{bmatrix}\right)
\end{equation}
where $\eps:=1/\eta$. Therefore, one can study the stability of the above matrix in brackets for small positive values which are close to $\eps=0$. The matrix on the left is marginally stable with a semisimple eigenvalue at 0 of multiplicity $n+1$. The normalized left- and right-eigenvectors associated with the zero eigenvalue are given by
\begin{equation}
  u = \begin{bmatrix}
    I & 0 & 0\\
    0 & -1 & 1
  \end{bmatrix}^T\ \textnormal{and}\ v = \begin{bmatrix}
    I & 0 & 0\\
    0 & 0 & 1
  \end{bmatrix}^T.
\end{equation}
Therefore, the zero eigenvalues bifurcate according to the expression \cite{Seyranian:03}
\begin{equation}
\begin{array}{rcl}
  \lambda_0(\eta)&=&0+\eps \lambda_i\left(u^T\begin{bmatrix}
    \bar{A} & 0 & -e_n  k_pr\\
    0 & 0 & -\mu k_p/u_*\\
   \theta  e_n ^T & 0 & -\mu k_p/u_*
  \end{bmatrix}v\right)+o(\eps)\\
  &=&\eps\lambda_i\left(\begin{bmatrix}
    \bar{A} & -e_n  k_pr\\
    \theta e_n ^T & 0
  \end{bmatrix}\right)+o(\eps)\\
&=&\eps\lambda_i\left(\begin{bmatrix}
    \bar{A} & -e_n  k_p\mu\\
    e_n ^T & 0
  \end{bmatrix}\right)+o(\eps)
\end{array}
\end{equation}
where $\lambda_i(\cdot)$ denotes the $i$-th eigenvalue of the matrix and where the last matrix has been obtained by pre- and post-multiplying the previous matrix by $\diag(\theta I,1)$ and $\diag(\theta^{-1} I,1)$, respectively. Therefore, under the condition of the result. the real part of the eigenvalues of the matrix are negative and all the zero eigenvalues bifurcate inside the open left half-plane when increasing $\eps$ from 0 to small positive values. As this corresponds to moving $\eta$ from infinity to large positive values, the result is proven.
\end{proof}}

\subsection{Structural stability analysis for the niAIC with output inhibition - Stable case}\label{sec:uni:niAIC:stable}

This result is instrumental in proving the main result of the section:
\black{\begin{theorem}\label{th:Hn}
  Assume that $M$ is Metzler and Hurwitz stable. Then, the transfer function $H(s):=e_n^T(sI-M)^{-1}e_n$ is strictly positive real.
\end{theorem}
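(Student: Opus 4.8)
The plan is to verify the three defining conditions of strict positive realness in Definition \ref{def:PRSPR} by combining the LMI characterization of Proposition \ref{prop:LMI} with the zero-location result of Lemma \ref{lemma:zeros} and the high-frequency asymptotics of Lemma \ref{lemma:infinity}. Throughout I identify $H$ with the realization $(A,B,C)=(M,e_n,e_n^T)$, so that the poles of $H$ are eigenvalues of $M$ and therefore lie in the open left half-plane by Hurwitz stability; this immediately disposes of the pole condition.

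First I would establish weak strict positive realness through Proposition \ref{prop:LMI}. Its first hypothesis -- that $H$ has no imaginary-axis zero -- is immediate from Lemma \ref{lemma:zeros}, since $M$ Metzler and Hurwitz forces all $n-1$ zeros into the open left half-plane. For the second hypothesis I must exhibit $P\succ0$ and $\eps>0$ with $Pe_n=e_n$ and $M^TP+PM+2\eps e_ne_n^T\prec0$. Here I would invoke the classical fact from positive systems theory \cite{Farina:00} that a Metzler Hurwitz matrix is diagonally stable, i.e.\ there is a positive diagonal $D=\diag(d_1,\dots,d_n)$ with $M^TD+DM\prec0$. Rescaling $D$ by $1/d_n$ yields a positive diagonal $P$ with $P_{nn}=1$, hence $Pe_n=e_n$, while the strict inequality $M^TP+PM\prec0$ is preserved under positive scaling. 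As the left-hand side is negative definite, adding the rank-one term $2\eps e_ne_n^T$ keeps it negative definite for all sufficiently small $\eps>0$. Proposition \ref{prop:LMI} then delivers $\Re[H(j\omega)]>0$ for all $\omega\in[0,\infty)$, which together with the pole condition gives WSPR.

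It remains to upgrade WSPR to SPR by checking condition \eqref{st:SPR:3} of Definition \ref{def:PRSPR}. Since $H$ is strictly proper, $H(\infty)=0$, so I would instead verify $\lim_{\omega\to\infty}\omega^2\Re[H(j\omega)]>0$. Writing $H=N/D$ with $D(s)=\det(sI-M)$ and $N(s)=\det(sI_{n-1}-S^TMS)$ both monic of degrees $n$ and $n-1$ (as identified in Lemma \ref{lemma:zeros}), Lemma \ref{lemma:infinity} with $K=1$ gives $\lim_{\omega\to\infty}\omega^2\Re[H(j\omega)]=\sum_i z_i-\sum_i p_i=\trace(S^TMS)-\trace(M)=-M_{nn}$. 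The final step is the sign: a Metzler Hurwitz matrix has strictly negative diagonal entries, since by Perron--Frobenius $\lambda_{\mathrm{PF}}(M)\ge\max_i M_{ii}$ and $\lambda_{\mathrm{PF}}(M)<0$ forces $M_{nn}<0$, whence $-M_{nn}>0$ and the limit is positive. This establishes SPR.

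I expect the main obstacle to be the construction of $P$ in the second step: one needs a Lyapunov certificate simultaneously satisfying the exact interface constraint $Pe_n=e_n$ demanded by the KYP-based Proposition \ref{prop:LMI} and the strict Lyapunov inequality. The diagonal-stability property of Metzler Hurwitz matrices is precisely what makes this possible, because a diagonal certificate can be freely rescaled to normalize its $(n,n)$ entry without affecting definiteness, whereas a generic dense Lyapunov matrix would not admit such a clean normalization. Everything else reduces to bookkeeping already packaged in Lemmas \ref{lemma:zeros} and \ref{lemma:infinity}.
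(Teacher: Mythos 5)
Your proposal is correct and follows essentially the same route as the paper's own proof: poles from Hurwitz stability, the frequency-domain condition via Proposition \ref{prop:LMI} with a rescaled diagonal Lyapunov certificate from the diagonal stability of Metzler Hurwitz matrices, and the condition at infinity via Lemmas \ref{lemma:zeros} and \ref{lemma:infinity} yielding the limit $-e_n^TMe_n>0$. The only (welcome) addition is your explicit Perron--Frobenius justification that the diagonal entries of a Metzler Hurwitz matrix are negative, which the paper asserts without proof.
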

\begin{proof}
 Since $M$ is Hurwitz stable, then the poles of $H(s)$ have all negative real part, and the condition \eqref{st:SPR:1} of Definition \ref{def:PRSPR} is met.\\

Using now Lemma \ref{lemma:zeros}, the zeros are given by the eigenvalues of $M_{11}$ which have all negative real part, and we have $H(s)=N(s)/D(s)$ with $D(s)=\det(sI-M)$ and $N(s)=\det(sI-M_{11})$ together with $\deg(D)=n$ and $\deg(N)=s$. As a result, we have that $H(0)>0$ and $H(\infty)=0$. So, we need to check the second property of the statement \eqref{st:SPR:3} of Definition \ref{def:PRSPR}. To this aim, we use Lemma \ref{lemma:infinity} and note that all the assumptions of this result are verified with $K=1$. Therefore, we get that
  \begin{equation}
    \lim_{\omega\to\infty}\omega^2\Re[H(j\omega)]=\trace(M_{11})-\trace(M)=-e_n^TMe_n.
  \end{equation}
  Since $M$ is Metzler and Hurwitz stable, then $-e_n^TMe_n>0$ and the conclusion follows; i.e. the condition \eqref{st:SPR:3} of Definition \ref{def:PRSPR} is verified.\\

  We now check the condition \eqref{st:SPR:2} of Definition \ref{def:PRSPR}. To this aim, we consider Proposition \ref{prop:LMI}. Adapted to our problem, we need to find  a matrix  $P\in\mathbb{S}_{\succ0}^d$ such that $M^TP+PM+2\eps e_ne_n^T\prec0$ and $Pe_n -e_n=0$. The equality constraint imposes that $P=\diag(P_1,1)$ where $P_1\in\mathbb{S}^{n-1}_{\succ0}$. Clearly, we have that if $M^TP+PM\prec0$ holds, then $M^TP+PM+2\eps e_ne_n^T\prec0$ holds for any sufficiently small $\eps>0$. Using now the fact that $M$ is Hurwitz stable and Metzler, then this is equivalent \cite{Farina:00} to say that there exists a diagonal matrix $D$ with positive diagonal entries such that $M^TD+DM\prec0$. Dividing this inequality by $e_n^TDe_n>0$ and letting $P_s:=D/e_n^TDe_n$, we get that $M^TP_s+P_sM\prec0$. Observing that this $P_s$ satisfies the conditions of the result shows that the condition \eqref{st:SPR:2} of Definition \ref{def:PRSPR} is met. Therefore, the transfer function $H(s)$ is SPR.
\end{proof}}

We can now state the main result of the section:
\black{\begin{theorem}\label{th:structstab1}
  Let $\mu,\theta>0$ be given and such that $r<g_0$ and assume that $A$ is Metzler and Hurwitz stable. Then, the unique equilibrium point of the system \eqref{eq:mainsystCL2} is locally exponentially stable for all $\eta,k_p>0$.
\end{theorem}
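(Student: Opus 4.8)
The plan is to establish that the state matrix $J$ of the linearised dynamics \eqref{eq:A} is Hurwitz stable for every $\eta,k_p>0$; the claimed local exponential stability is then immediate. First I would check that $\bar A=A-u_*e_ne_n^T$ is itself Metzler and Hurwitz stable. Since $r<g_0$ and $g_n>0$ we have $u_*=(g_0-r)/(g_nr)>0$, so $\bar A$ is obtained from $A$ by decreasing only the $(n,n)$ diagonal entry while leaving every off-diagonal entry unchanged; hence $\bar A$ is Metzler and $\bar A\le A$ entrywise. By the Perron--Frobenius monotonicity already used in Lemma \ref{lemma:zeros}, $\lambda_{\mathrm{PF}}(\bar A)\le\lambda_{\mathrm{PF}}(A)<0$, so $\bar A$ is Hurwitz. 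Theorem \ref{th:Hn} then applies with $M=\bar A$ and yields that $H(s):=e_n^T(sI-\bar A)^{-1}e_n$ is strictly positive real.

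The key idea is to read $J$ as a negative feedback interconnection. The first block is the plant with state $\tilde x$, input $\tilde z_2$ and output $\tilde x_n=e_n^T\tilde x$, governed by $\dot{\tilde x}=\bar A\tilde x-e_nk_pr\tilde z_2$; its transfer function is $-k_pr\,H(s)$. The second block is the controller with states $\tilde z_1,\tilde z_2$, input $\tilde x_n$ and output $\tilde z_2$. Setting $a:=\eta u_*$ and $b:=\mu k_p/u_*$ and eliminating $\tilde z_1$ from the two controller rows of \eqref{eq:A} gives the controller transfer function
\begin{equation}
  C(s)=\dfrac{s+a}{s(s+a+b)}.
\end{equation}
With the interconnection $u_{\mathrm{plant}}=-\tilde z_2$ and $u_{\mathrm{ctrl}}=\tilde x_n$ one recovers exactly the dynamics of $J$, and the loop is the negative feedback interconnection of $k_pr\,H(s)$ and $C(s)$, with return difference $1+k_pr\,H(s)C(s)$.

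It then remains to verify the positivity properties and invoke Theorem \ref{th:interconnection}. Since $k_pr>0$, the plant block $k_pr\,H(s)$ is strictly positive real because $H$ is. For the controller block, a direct computation gives $\Re[C(j\omega)]=b/(\omega^2+(a+b)^2)>0$ for every $\omega>0$, its only imaginary-axis pole is the simple pole at the origin with positive residue $\lim_{s\to0}sC(s)=a/(a+b)>0$, and its remaining pole $-(a+b)$ lies in the open left half-plane; hence $C(s)$ is positive real. Theorem \ref{th:interconnection} then shows the interconnection is internally stable. Because the controller realisation is minimal (the zero $-a$ cancels neither pole $0$ nor $-(a+b)$) and any uncontrollable or unobservable mode of the plant block is an eigenvalue of the Hurwitz matrix $\bar A$, internal stability of the loop is equivalent to $J$ being Hurwitz, which is what we want. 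Crucially, the positive-real character of $C(s)$ holds for all $a,b>0$ and the strict positive realness of $k_pr\,H(s)$ for all $k_p>0$, so the conclusion is uniform in $\eta,k_p>0$, giving the asserted structural stability.

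I expect the main obstacle to be identifying the controller transfer function $C(s)$ and proving it is positive real, in particular recognising that the integrator pole at the origin is admissible precisely because $H$ is strictly (rather than merely weakly) positive real, so that the interconnection theorem still applies. The second delicate point is the passage from internal stability of the interconnection to Hurwitz stability of the full matrix $J$, which is handled by the hidden-mode argument above since all such modes are stable eigenvalues of $\bar A$.
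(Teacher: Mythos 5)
Your proof is correct, and it reaches the conclusion through the same general strategy as the paper (positive-real feedback interconnection plus Theorem \ref{th:Hn} and Theorem \ref{th:interconnection}) but with a genuinely different loop decomposition. The paper groups the plant state $\tilde x$ together with $\tilde z_1$ into a single block whose transfer function is $G_\eta(s)=H_n(s)r+\dfrac{\mu s}{u_*(s+\eta u_*)}$, shows that this sum is SPR (the second term is PR with positive feedthrough, so it cannot destroy the strict positivity of $\Re[H_n(j\omega)]r$), and closes the loop against the pure integrator $k_p/s$, which is PR. You instead isolate the plant $\bar A$ as the SPR block $k_prH(s)$ and lump the entire two-state controller into the PR block $C(s)=(s+a)/(s(s+a+b))$, whose positive realness you verify directly (nonnegative real part off the origin, simple pole at $0$ with positive residue $a/(a+b)$). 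Both decompositions are valid; the paper's has the advantage that the block containing the imaginary-axis pole is the trivial integrator, while yours requires the (correctly executed) extra computation of $\Re[C(j\omega)]$ and the residue. Your write-up also makes explicit two points the paper leaves implicit: that $\bar A=A-u_*e_ne_n^T$ inherits the Metzler and Hurwitz properties from $A$ via Perron--Frobenius monotonicity (this is genuinely needed to invoke Theorem \ref{th:Hn} and is not restated in the paper's proof), and that internal stability of the interconnection implies Hurwitz stability of the full Jacobian because the controller realisation is minimal and any hidden plant modes are eigenvalues of the Hurwitz matrix $\bar A$. These additions strengthen rather than alter the argument.
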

\begin{proof}
To analyze the stability of the system in \eqref{eq:A} for all $k_p>0$, we can reformulate the problem as the stability analysis of a negative feedback interconnection of two systems. To see this, we can consider the following equivalent reformulation of \eqref{eq:A}
\begin{equation}
  \begin{array}{rcl}
    \begin{bmatrix}
    \dot{x}(t)\\
    \dot{z}_1(t)
    \end{bmatrix}&=&\begin{bmatrix}
      \bar{A} & 0\\
       0&-\eta u_*
    \end{bmatrix}\begin{bmatrix}
    x(t)\\
    z_1(t)
    \end{bmatrix}+\begin{bmatrix}
      e_nr\\
      \dfrac{\mu}{u_*}
    \end{bmatrix}w(t)\\
  v(t)&=&\begin{bmatrix}
     \theta e_n^T & -\eta u_*
    \end{bmatrix}\begin{bmatrix}
    x(t)\\
    z_1(t)
    \end{bmatrix}+\dfrac{\mu}{u_*}w(t)\\
    w(t)&=&k_p\int_0^tv(s)\ds.
  \end{array}
\end{equation}
The transfer from $v$ to $w$ is simply given by $k_p/s$  whereas the transfer function from $w$ to $v$ is given by
\begin{equation}
\begin{array}{rcl}
  G_{\eta}(s)&:=&\begin{bmatrix}
   \theta e_n^T & -\eta u_*
  \end{bmatrix}\begin{bmatrix}
    sI-\bar A & 0\\
    0 &  s+\eta u_*
  \end{bmatrix}^{-1}\begin{bmatrix}
     e_nr\\
    \mu/u_*
  \end{bmatrix}+\mu/u_*\\
  &=& H_n(s)\mu+\dfrac{-\eta\mu}{s+\eta u_*}+\dfrac{\mu}{u_*}\\
  &=& H_n(s)\mu+G(s)
\end{array}
\end{equation}
where
\begin{equation}
  \begin{array}{rcl}
    G(s)&:=&\dfrac{\mu s}{u_*(s+\eta u_*)}\\
    H_n(s)&:=&e_n^T(sI-\bar A)^{-1}e_n,
  \end{array}
\end{equation}
which results in the interconnection is depicted in Figure~\ref{fig:interconnection}. Since, $k_p/s$ is positive real for all $k_p>0$, if we can prove that $G_{\eta}(s)$ is (weakly) strictly positive real, then from Theorem \ref{th:interconnection}, the feedback interconnection will be stable for all $k_p>0$. From Theorem \ref{th:Hn}, the transfer function $H_n(s)$ is SPR. One can also observe that $G(s)$ is stable for all $\eta>0$, positive real since $\Re[G(j\omega)]=\omega^2/(\omega^2+\eta^2u_*^2)\ge0$ for all $\omega\ge0$, and such that $G(\infty)=\mu/u_*>0$. As a result, the sum of those transfer functions is SPR for all $\eta>0$ and, therefore, the feedback interconnection is stable for all $k_p,\eta>0$. This proves the result.
\end{proof}}

This result is quite powerful in the sense that the only assumption we have on the system is its stability and the only one on the controller is that the set-point is admissible. Interestingly, this result readily generalizes to uncertain systems using the same ideas as in \cite{Briat:20:Structural}.\\

\begin{figure}
  \centering
  \includegraphics[width=.5\textwidth]{./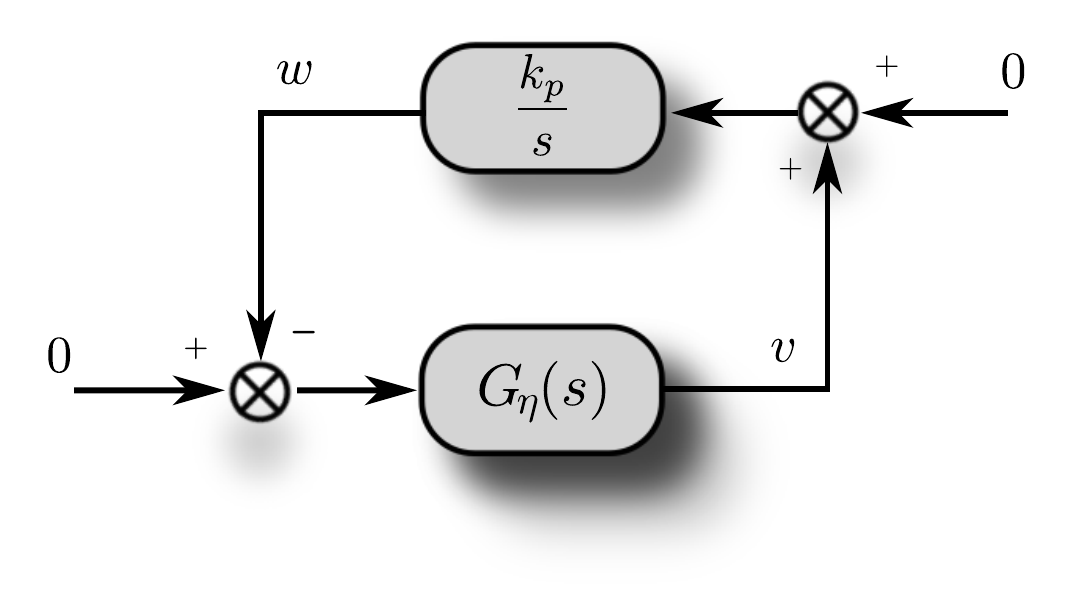}
  \caption{Equivalent negative interconnection}\label{fig:interconnection}
\end{figure}

\black{We illustrate Theorem \ref{th:structstab1} through the following simple example:
\begin{example}-
  Let us consider the following mass-action gene expression network with protein maturation and positive feedback:
  \begin{equation}\label{eq:RN:maturation}
  \begin{matrix}
    \phib&\rarrow{k_0}&\X{1} & \X{1}&\rarrow{k_{21}}&\X{1}+\X{2} & \X{i}&\rarrow{\gamma_i}\phib,i=1,2,3\\
    \X{2}&\rarrow{k_{32}}&\X{3}& \X{3}&\rarrow{k_{13}}&\X{1}+\X{3}
  \end{matrix}
  \end{equation}
  where $\X{1},\X{2}$, and $\X{3}$ are the mRNA, protein, and matured protein species, respectively, and where the parameters of the reactions of positive real numbers. We assume here that we would like to control the maturated species, that is, $\Yz=\X{3}$. The model of the system is given by
\begin{equation}
\begin{bmatrix}
  \dot{x}_1\\
    \dot{x}_2\\
    \dot{x}_3
\end{bmatrix}=\begin{bmatrix}
    -\gamma_1 & 0 & k_{13}\\
    k_{21} & -\gamma_2 & 0\\
    0 & k_{32} & -\gamma_3
  \end{bmatrix}\begin{bmatrix}
    x_1\\
    x_2\\
    x_3
  \end{bmatrix}+\begin{bmatrix}
    k_0\\
    0\\
    0
  \end{bmatrix}
\end{equation}
where $x_1,x_2,x_3$ are the mRNA, protein, and matured protein concentrations, respectively. The matrix describing the dynamics of the network is Metzler, by construction, and is Hurwitz stable provided that $\gamma_1\gamma_2\gamma_3-k_{13}k_{32}k_{21}>0$ (Routh-Hurwitz criterion). This yields the following result which is immediate application of Theorem \ref{th:structstab1}

\begin{theorem}
  Assume that the matrix describing the dynamics of the network is Hurwitz stable, then the closed-loop network \eqref{eq:RN:maturation}, \eqref{eq:AIC:niAIC_output} is locally exponentially stable for all $0<\mu/\theta<g_0$ and all $k_p>0,\eta>0$ where
  \begin{equation}
  g_0=\dfrac{k_0k_{21}k_{32}}{\gamma_1\gamma_2\gamma_3-k_{21}k_{32}k_{13}}>0.
\end{equation}
\end{theorem}
\end{example}}


\begin{figure}[H]
  \centering
\begin{minipage}[h]{0.45\linewidth}
    \begin{center}
  \includegraphics[width=\textwidth]{./Matlab/pAIC_Unimolecular_Gene_expression_maturation_stable/niAIC_stable_heatmap_rotated.pdf}
  \end{center}
\end{minipage}
\begin{minipage}[h]{0.45\linewidth}
    \begin{center}
  \includegraphics[width=\textwidth]{./Matlab/pAIC_Unimolecular_Gene_expression_maturation_stable/niAIC_stable_trajectories.pdf}
  \end{center}
\end{minipage}
\caption{\black{\textbf{Left.} Spectral abscissa of the system associated with the \eqref{eq:RN:maturation}, \eqref{eq:AIC:niAIC_output} with the parameters $\gamma_1=1$, $\gamma_2=\gamma_3=2$, $k_{21}=1$, $k_{32}=2$, $b_0=10$, $k_{13}=1$, $\eta=100/k_p$, and $\theta=1$ and for various values for $\mu$ and $k_p$. In this case, we have that $g_0=10$. Simple calculations show that the spectral abscissa of $A$, is $\alpha(A)=-0.3044$ while the spectral abscissa of the closed-loop system may reach smaller values, which indicates that this controller is able to improve the convergence properties of the system near the equilibrium point. \textbf{Right.} Time domain evolution of the concentrations of $\X{3}$ various values for the set-point $\mu/\theta$ and controller gains $k_p$. The left column depicts simulation results under zero initial conditions whereas the right column depicts the response of the closed-loop network when the parameter $k_{32}$ changes from 2 to 3 at $t=5$.}}
\end{figure}

\subsection{Structural stability analysis for the niAIC with output inhibition - Unstable case}\label{sec:uni:niAIC:unstable}

We relax in this section the condition that $A$ is Hurwitz stable and derive a sequence of results to address the more general case where $A$ is allowed to be unstable.

\subsubsection{Preliminaries}

The results obtained in the previous section relies on the assumption that $A$ is Hurwitz stable. However, this assumption appears to be too strict since only the matrix $\bar A=A-e_ne_n^Tu_*$ would need to be Hurwitz stable. As it will be shown later, the class of matrices $A$ that can actually be considered is that of \emph{output-unstable matrices}, which is defined below:
\black{\begin{define}
  We say that a matrix $M\in\mathbb{R}^{n\times n}$ partitioned as
  \begin{equation}
    M=:\begin{bmatrix}
      M_{11} & M_{12}\\
      M_{21} & M_{22}
    \end{bmatrix}
  \end{equation}
  where $M_{11}\in\mathbb{R}^{(n-1)\times (n-1)}$, $M_{12}\in\mathbb{R}^{n-1}$, $M_{21}^T\in\mathbb{R}^{n-1}$, and $M_{22}\in\mathbb{R}$ is output unstable if
  \begin{enumerate}[(a)]
    \item $M_{11}$ is Hurwitz stable, and
    \item $M_{22}-M_{21}M_{11}^{-1}M_{12}>0$.
  \end{enumerate}
\end{define}}

\black{The following result characterizes the sign pattern of the inverse matrix $M^{-1}$ as well as the sign of the gains $g_n$ and $g_0$ defined in Proposition \ref{prop:eqpt}:
\begin{lemma}\label{lem:inversion}
  Assume that the matrix $M$ is Metzler, output unstable, and nonsingular. Then, the following statements hold:
  \begin{enumerate}[(a)]
    \item The inverse matrix exhibits the sign-pattern $S^TM^{-1}e_n\ge0$, $e_n^TM^{-1}S\ge0$, and $e_n^TM^{-1}e_n>0$ where $S:=\begin{bmatrix}
      I_{n-1} \\ 0\end{bmatrix}$; and
    \item The gains $g_0$ and are $g_n$ are such that $g_0:=-e_n^TM^{-1}b_0\le0$ and $g_n:=-e_n^TM^{-1}e_n<0$.
  \end{enumerate}
\end{lemma}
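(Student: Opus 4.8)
The plan is to exploit the block structure of $M$ relative to the splitting of $\mathbb{R}^n$ into its first $n-1$ coordinates and its last one, exactly as in Lemma \ref{lemma:zeros}, and then to read off every sign from an explicit block-inverse formula. Following that lemma I write $M_{11}=S^TMS$, $M_{12}=S^TMe_n$, $M_{21}=e_n^TMS$, and $M_{22}=e_n^TMe_n$. Since $M$ is Metzler, all its off-diagonal entries are nonnegative, which gives $M_{12}\ge0$ and $M_{21}\ge0$ entrywise (these are off-diagonal entries of $M$) and shows that $M_{11}$ is itself Metzler. The output-unstable hypothesis supplies the two remaining ingredients I need: $M_{11}=S^TMS$ is Hurwitz stable, and $M_{22}=e_n^TMe_n>0$.

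The first key step is to invoke the M-matrix property: a Metzler Hurwitz-stable matrix has a nonpositive inverse, so $M_{11}^{-1}\le0$ entrywise. This is the same fact already used in the proof of Proposition \ref{prop:small_kp} (see \cite{Farina:00,Berman:94}), and it guarantees in particular that $M_{11}$ is nonsingular. I may therefore form the Schur complement $\sigma:=M_{22}-M_{21}M_{11}^{-1}M_{12}$. The second key step is to determine the sign of $\sigma$: because $M_{21}\ge0$, $M_{11}^{-1}\le0$, and $M_{12}\ge0$, the scalar $M_{21}M_{11}^{-1}M_{12}$ is nonpositive, whence $\sigma\ge M_{22}>0$. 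Combined with the nonsingularity of $M_{11}$, this in fact re-derives the assumed nonsingularity of $M$.

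With $\sigma>0$ in hand I apply the standard block-inversion formula to obtain the three relevant blocks,
\begin{equation}
  e_n^TM^{-1}e_n=\sigma^{-1},\qquad S^TM^{-1}e_n=-M_{11}^{-1}M_{12}\sigma^{-1},\qquad e_n^TM^{-1}S=-\sigma^{-1}M_{21}M_{11}^{-1}.
\end{equation}
Statement (a) is then immediate: $e_n^TM^{-1}e_n=\sigma^{-1}>0$; and since $-M_{11}^{-1}\ge0$ while $M_{12},M_{21}\ge0$ and $\sigma^{-1}>0$, both $S^TM^{-1}e_n\ge0$ and $e_n^TM^{-1}S\ge0$. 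For statement (b), $g_n=-e_n^TM^{-1}e_n=-\sigma^{-1}<0$ follows at once, and for $g_0$ I decompose $b_0=S(S^Tb_0)+e_n(e_n^Tb_0)$ using $SS^T+e_ne_n^T=I_n$. Since $b_0\ge0$ gives $S^Tb_0\ge0$ and $e_n^Tb_0\ge0$, I get $e_n^TM^{-1}b_0=(e_n^TM^{-1}S)(S^Tb_0)+(e_n^TM^{-1}e_n)(e_n^Tb_0)\ge0$ by the sign pattern just established, so $g_0=-e_n^TM^{-1}b_0\le0$.

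The only genuinely nontrivial ingredients are the M-matrix inverse-sign fact ($M_{11}^{-1}\le0$) and the positivity of the Schur complement $\sigma$; I expect the latter to be the main point to get right, since it is what converts Metzler-plus-output-instability into a clean sign for $e_n^TM^{-1}e_n$. Once these are in place, everything else is bookkeeping with the block-inverse formula and the nonnegativity of $b_0$.
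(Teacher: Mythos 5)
Your proposal is correct and follows essentially the same route as the paper: the same block decomposition of $M$, the same use of $M_{11}^{-1}\le0$ for a Metzler Hurwitz-stable block, the positivity of the Schur complement $M_{22}-M_{21}M_{11}^{-1}M_{12}$, and the block-inversion formula to read off the signs. Your version merely spells out a couple of steps the paper leaves implicit (why the Schur complement exceeds $M_{22}$, and the decomposition of $b_0$), which is fine.
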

\begin{proof}
  From the block matrix inversion formula, we have that
  \begin{equation}
    M^{-1}=\begin{bmatrix}
      \star & -M_{11}^{-1}M_{12}(M_{22}-M_{21}M_{11}^{-1}M_{12})^{-1}\\
      -(M_{22}-M_{21}M_{11}^{-1}M_{12})^{-1}M_{21}M_{11}^{-1} & (M_{22}-M_{21}M_{11}^{-1}M_{12})^{-1}
    \end{bmatrix}.
  \end{equation}
  Since $M$ is Metzler, output unstable, and nonsingular, then we have both $M_{11}^{-1}\le0$ and $M_{22}-M_{21}M_{11}^{-1}M_{12}>0$. This implies that $S^TM^{-1}e_n\ge0$, $e_n^TM^{-1}S\ge0$, and $e_n^TM^{-1}e_n>0$. Since $e_n^TM^{-1}\ge0$, then we have that  $e_n^TM^{-1}b_0\ge0$, and the result follows.
\end{proof}}

\black{The following result provides a condition for which  the matrix $\bar{A}=A-e_ne_n^Tu_*$ to be Hurwitz stable.
\begin{lemma}\label{lem:HSuns}
  Assume that the matrix $A$ is Metzler and output unstable and that $r>0$. Then, $\bar A=A-e_ne_n^Tu_*$ is Hurwitz stable if and only if $g_0<0$.
\end{lemma}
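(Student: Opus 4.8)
The plan is to exploit the fact that $\bar A=A-u_*e_ne_n^T$ differs from $A$ only in its $(n,n)$ entry, so that $\bar A$ remains Metzler, and to characterise its Hurwitz stability through the nonsingular $M$-matrix criterion \cite{Berman:94}: a Metzler matrix $M$ is Hurwitz stable if and only if all the leading principal minors of $-M$ are positive. Writing $A$ in the block form $A=\begin{bmatrix} A_{11} & A_{12}\\ A_{21} & A_{22}\end{bmatrix}$ with $A_{11}=S^TAS\in\mathbb{R}^{(n-1)\times(n-1)}$ and $A_{22}=e_n^TAe_n$, the matrix $\bar A$ has exactly the same top-left $(n-1)\times(n-1)$ block as $A$; only the scalar $A_{22}$ is replaced by $A_{22}-u_*$. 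The whole argument therefore splits into two parts: the leading principal minors of sizes $1,\ldots,n-1$, and the full determinant.

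First I would dispose of the minors of size at most $n-1$. Since $A$ is Metzler, $A_{11}=S^TAS$ is Metzler as a principal submatrix, and output unstability gives that $A_{11}$ is Hurwitz stable. Hence $-A_{11}$ is a nonsingular $M$-matrix and all of its leading principal minors are positive; because $\bar A$ and $A$ share the block $A_{11}$, these coincide with the leading principal minors of $-\bar A$ of sizes $1,\ldots,n-1$. Thus the first $n-1$ leading principal minors of $-\bar A$ are positive irrespective of the sign of $g_0$, and the Hurwitz stability of $\bar A$ reduces entirely to the sign of $\det(-\bar A)=(-1)^n\det(\bar A)$.

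It then remains to compute $\det(\bar A)$. Using the Schur complement with respect to the invertible block $A_{11}$ I would write $\det(\bar A)=\det(A_{11})\bigl(A_{22}-u_*-A_{21}A_{11}^{-1}A_{12}\bigr)$, and identify the Schur complement of $A_{11}$ in $A$ with $A_{22}-A_{21}A_{11}^{-1}A_{12}=(e_n^TA^{-1}e_n)^{-1}=-1/g_n$. Substituting $u_*=(g_0-r)/(g_nr)$ and simplifying gives the compact expression $\det(\bar A)=\det(A_{11})\cdot\bigl(-g_0/(g_nr)\bigr)$. From Lemma \ref{lem:inversion} we have $g_n<0$ and $g_0\le0$, while $r>0$ and $\det(A_{11})$ carries the sign $(-1)^{n-1}$ because $A_{11}$ is Hurwitz of size $n-1$ (for any Hurwitz matrix $B$ one has $\det(-B)>0$). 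Tracking these signs, $\det(-\bar A)>0$ exactly when $g_0<0$, whereas $g_0=0$ forces $\det(\bar A)=0$ so that $\bar A$ has a zero eigenvalue and cannot be Hurwitz. Combining with the previous paragraph and the $M$-matrix criterion yields that $\bar A$ is Hurwitz stable if and only if $g_0<0$, as claimed (recall that $g_0\le0$ always, so the two cases $g_0<0$ and $g_0=0$ are exhaustive).

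The main obstacle I anticipate is the bookkeeping that reduces Hurwitz stability to a single determinant sign: one must invoke the leading-principal-minor characterisation of Metzler Hurwitz matrices and check that the smaller minors are genuinely controlled by the output-unstability hypothesis rather than by $g_0$. Once that reduction is in place, the remaining work — the Schur-complement evaluation of $\det(\bar A)$ and the sign analysis — is routine, the only delicate point being the correct identification of the Schur complement with $-1/g_n$ and the simplification of the $u_*$ substitution.
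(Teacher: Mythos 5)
Your proof is correct, and it reaches the same pivotal scalar inequality as the paper --- namely that stability of $\bar A$ hinges on the sign of the shifted Schur complement $A_{22}-A_{21}A_{11}^{-1}A_{12}-u_*=-1/g_n-u_*=-g_0/(g_nr)$ --- but via a different characterization of Hurwitz stability for Metzler matrices. The paper works with the positive-vector test ($\bar A$ Metzler is Hurwitz iff there exists $v>0$ with $\bar A v=-w$ for some $w>0$), solves the first block row for $v_1$ using $A_{11}^{-1}\le0$, and substitutes into the last row to isolate the scalar condition; you instead invoke the leading-principal-minor criterion for nonsingular $M$-matrices, observe that the first $n-1$ minors are inherited verbatim from $-A_{11}$ (positive by output unstability), and reduce everything to the sign of $\det(-\bar A)$, which you evaluate by the same Schur-complement identity $A_{22}-A_{21}A_{11}^{-1}A_{12}=-1/g_n$. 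Your sign bookkeeping ($\det(A_{11})$ of sign $(-1)^{n-1}$, $g_n<0$, $g_0\le0$ from Lemma \ref{lem:inversion}) is right, and the explicit observation that $g_0=0$ forces $\det(\bar A)=0$ makes the necessity direction slightly more transparent than in the paper's argument. Both routes ultimately rest on standard equivalent conditions for $M$-matrices from \cite{Berman:94}, so neither buys additional generality; yours trades the constructive positive vector (which could be reused, e.g., as a linear copositive Lyapunov function) for a cleaner determinant computation.
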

\begin{proof}
%
The control law is stabilizing if and only if there exist a vector $v_1\in\mathbb{R}_{>0}^{n-1}$ and scalar $v_2>0$ such that
\begin{equation}
  \begin{bmatrix}
    A_{11} & A_{12}\\
    A_{21} & A_{22}-u_*
  \end{bmatrix}\begin{bmatrix}
    v_1\\
    v_2
  \end{bmatrix}=-\begin{bmatrix}
    w_1\\
    w_2
  \end{bmatrix},
\end{equation}
for some $w_1\in\mathbb{R}_{>0}^{n-1}$ and $w_2>0$. Solving for the first row yields
\begin{equation}
  v_1=-A_{11}^{-1}(w_1+A_{12}v_2)
\end{equation}
where $v_1>0$ since $A_{11}$ is Metzler and Hurwitz stable (so its inverse is a nonpositive matrix). Substituting the value for $v_1$ in the second row yields
\begin{equation}
  (A_{22}-A_{21}A_{11}^{-1}A_{12}-u_*)v_2=A_{21}A_{11}^{-1}w_1-w_2\le-w_2<0
\end{equation}
where we have used the fact that $A_{11}^{-1}\le0$. The above inequality can only hold if $A_{22}-A_{21}A_{11}^{-1}A_{12}-u_*<0$.\\

Noting that $A_{22}-A_{21}A_{11}^{-1}A_{12}=-1/g_n$ the stability condition can be rewritten as $-1/g_n-u_*<0$ or, equivalently,
\begin{equation}
  \dfrac{-1}{g_n}-\dfrac{g_0-r}{g_nr}=\dfrac{-g_0}{g_nr}<0
\end{equation}
where the inequality holds provided that $g_0<0$ since $g_n<0$. This proves the result.
\end{proof}}

The following result states under what conditions the network has a unique, nonnegative equilibrium point:
\begin{theorem}\label{lem:positiveunstable}
  Assume that the matrix $A$ is Metzler, output unstable, and nonsingular, and that $g_0<0$, then $u^*>0$, any set-point $r>0$ is admissible for the system \eqref{eq:mainsystCL2}, and the associated equilibrium point is nonnegative and unique.
\end{theorem}
\begin{proof}
  The equilibrium point satisfies the expressions
  \begin{equation}
    -(A-e_ne_n^Tk_pz_2^*)x^*+b_0=0, x_n^*=\mu/\theta,\ z_2^*=u_*/k_p,\ \textnormal{and\ }z_1^*=\dfrac{\mu}{\eta u_*}
  \end{equation}
  Under the assumptions of the result, the matrix $A-e_ne_n^Tu_*$ is Metzler and Hurwitz stable, which shows the existence and uniqueness of a nonnegative equilibrium point for all $r>0$. The positivity of $u^*$ follows from the expression $u^*=(g_0-r)/(g_nr)$.
\end{proof}

\subsubsection{Persistent external excitation}\label{sec:linear:unstable:persistent}

\black{Based on the results of the previous section, we can state the main stability result in the case where $g_0<0$:
\begin{theorem}\label{th:main:unstable}
  Assume that $A$ is Metzler, output unstable, and nonsingular and that $g_0\ne 0$. Then, the unique equilibrium point of the system \eqref{eq:mainsystCL2} is locally exponentially stable for all $\eta,k_p,\mu,\theta>0$.
\end{theorem}
\begin{proof}
   Under the condition that $r>0$ then the equilibrium point is positive by virtue of Lemma \ref{lem:positiveunstable}. Under the extra condition that $g_0\ne0$, we have that $g_0<0$ (see Lemma \ref{lem:inversion}), then the matrix $\bar A=A-e_ne_n^Tu_*$ is Metzler and Hurwitz stable from Lemma \ref{lem:HSuns}. The rest of the proof is then identical to that of Theorem \ref{th:structstab1} as we are now in exactly the same setting.
\end{proof}}

\black{The underlying assumption that $A_{11}$ be Hurwitz stable is not restrictive in this case since this is a necessary condition for the matrix $A-e_ne_n^Tu_*$ to be Hurwitz stable. Additionally, the strict positively realness property of the transfer function $H_n(s)$ also requires the zeros to be stable, which is equivalent to saying that $A_{11}$ be Hurwitz stable. Interestingly, the set of admissible set-points is much larger in the unstable case than in the stable case. This phenomenon can be easily explained by the fact that in the unstable case, the controller can always let the output increase until it exceeds the set-point before starting repressing it to stabilize it around the desired set-point. For this to happen, the system needs to be persistently excited, which is the meaning of the condition that $g_0\ne0$. The case where $g_0=0$ will be treated in the next section.}\\

\black{We consider now the following illustrative example:
\begin{example}
  Consider the following gene expression network with maturation, positive feedback ($k_{13}$) and autocatalytic reaction ($\nu$)
\begin{equation}\label{eq:RN:maturation2}
\begin{bmatrix}
  \dot{x}_1\\
    \dot{x}_2\\
    \dot{x}_3
\end{bmatrix}=  \begin{bmatrix}
    -\gamma_1 & 0 & k_{13}\\
    k_{21} & -\gamma_2 & 0\\
    0 & k_{32} & \nu -\gamma_3
  \end{bmatrix}\begin{bmatrix}
    x_1\\
    x_2\\
    x_3
  \end{bmatrix}+\begin{bmatrix}
    k_0\\
    0\\
    0
  \end{bmatrix}
\end{equation}
where $x_1,x_2,x_3$ are the mRNA, protein, and matured protein concentrations, respectively. All the model parameters are positive except, possibly, $k_{13}$ and $\nu$. This matrix is output unstable if and only if
\begin{equation}
  \gamma_1\gamma_2(\nu -\gamma_3)+k_{21}k_{32}k_{13}>0,
\end{equation}
which yields the following result
\begin{theorem}
  Assume that $\gamma_1,\gamma_2>0$ that
\begin{equation}
  \gamma_1\gamma_2(\nu -\gamma_3)+k_{21}k_{32}k_{13}>0
\end{equation}
and
\begin{equation}
  g_0=\dfrac{-k_0k_{21}k_{32}}{\gamma_1\gamma_2(\nu-\gamma_3)+k_{21}k_{32}k_{13}}<0.
\end{equation}
 then the closed-loop network \eqref{eq:RN:maturation2}, \eqref{eq:AIC:niAIC} is locally exponentially stable for all $\mu,\theta,k_p>0,\eta>0$.
\end{theorem}
\begin{proof}
  In particular, the autocatalytic parameter is supposed to exceed the degradation rate of the matured protein; i.e. $\nu -\gamma_3>0$. We have that $A_{11}$ is Metzler and Hurwitz stable and that $\nu -\gamma_3>0$. Therefore, the matrix is Metzler and output unstable. Computing $g_0$ yields
\begin{equation}
  g_0=-\dfrac{bk_2k_3}{k_2k_3\alpha+\gamma_1\gamma_2(\nu -\gamma_3)}<0.
\end{equation}
Therefore, from Theorem \ref{th:main:unstable}, the equilibrium point of the closed-loop system is structurally stable provided that $r>0$.
\end{proof}
This result is illustrated by simulation in Figure \ref{fig:persistent:1} and Figure \ref{fig:persistent:2} where we can observe the predictions of the above theorem.
\end{example}
}

\begin{figure}[H]
  \centering
  \begin{minipage}[h]{0.45\linewidth}
    \begin{center}
    \includegraphics[width=\textwidth]{./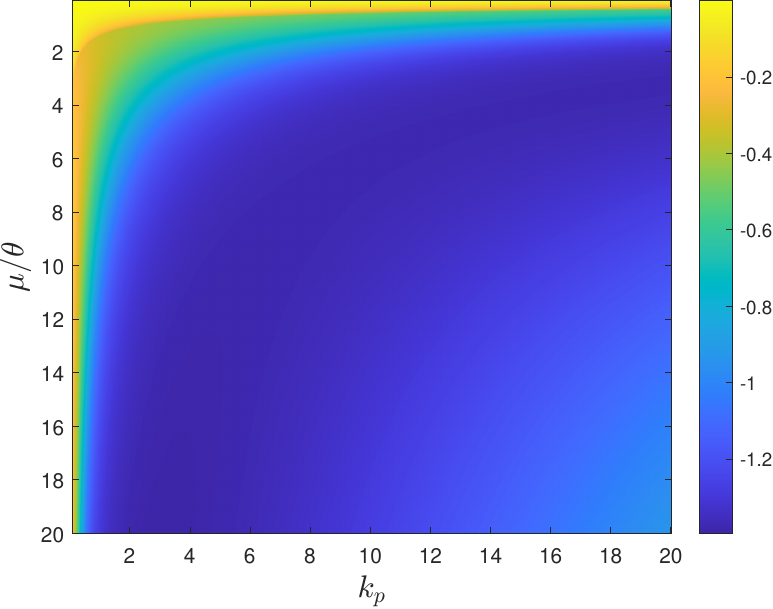}
  \end{center}
\end{minipage}
\begin{minipage}[h]{0.45\linewidth}
    \begin{center}
  \includegraphics[width=\textwidth]{./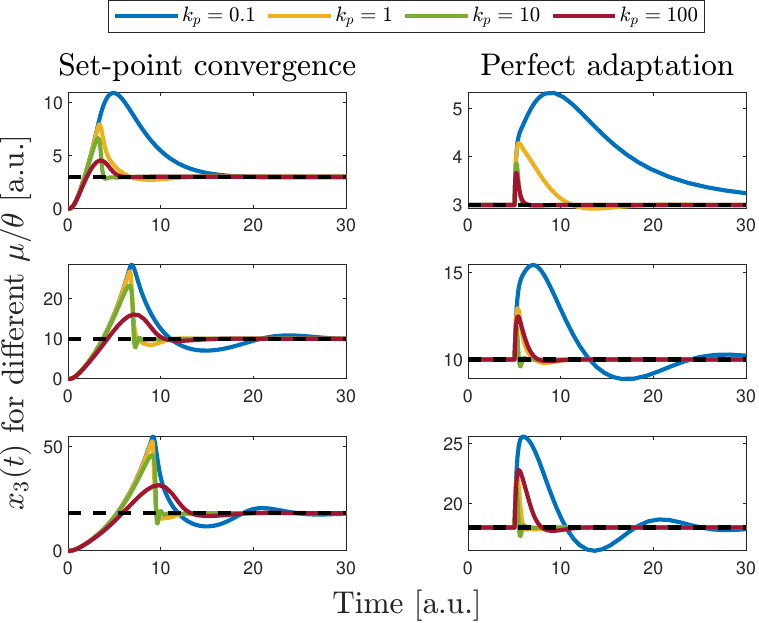}
  \end{center}
\end{minipage}
  \caption{\textbf{Left panel.} Spectral abscissa of the system associated with the \eqref{eq:RN:maturation2}, \eqref{eq:AIC:niAIC} with the parameters $\gamma_1=1$, $\gamma_2=\gamma_3=2$, $k_{21}=1$, $b_0=10$,  $k_{13}=3$, $\nu =0$, $\theta=1$, $\eta=100/k_p$ and for various values for $\mu$ and $k_p$. We have that which corresponds to $g_0=-10$, $g_n=-1$ and a spectral abscissa of $A$ given by $\alpha(A)=0.2188$. \textbf{Right panel.} State response from a zero initial condition (first column) and as a response to a change of $k_{21}$ from 2 to 3.}\label{fig:persistent:1}
\end{figure}

\begin{figure}[H]
  \centering
    \begin{minipage}[h]{0.45\linewidth}
    \begin{center}
   \includegraphics[width=\textwidth]{./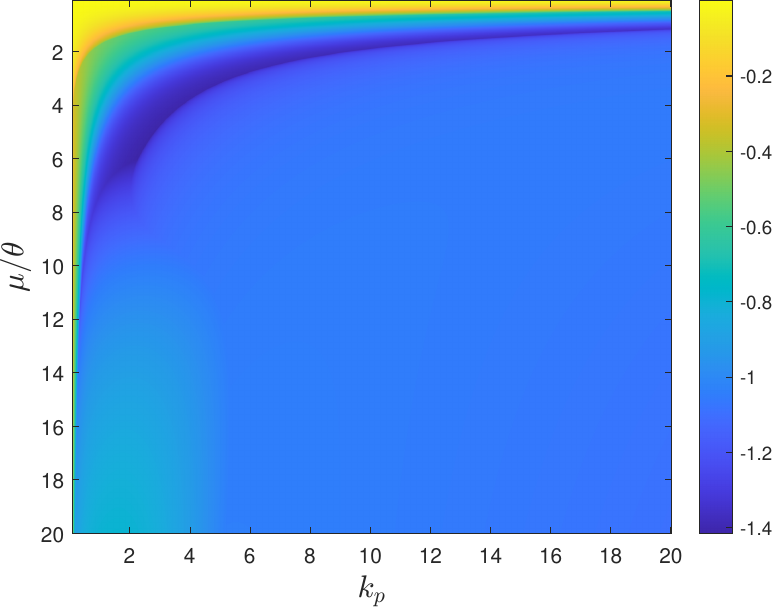}
  \end{center}
\end{minipage}
\begin{minipage}[h]{0.45\linewidth}
    \begin{center}
  \includegraphics[width=\textwidth]{./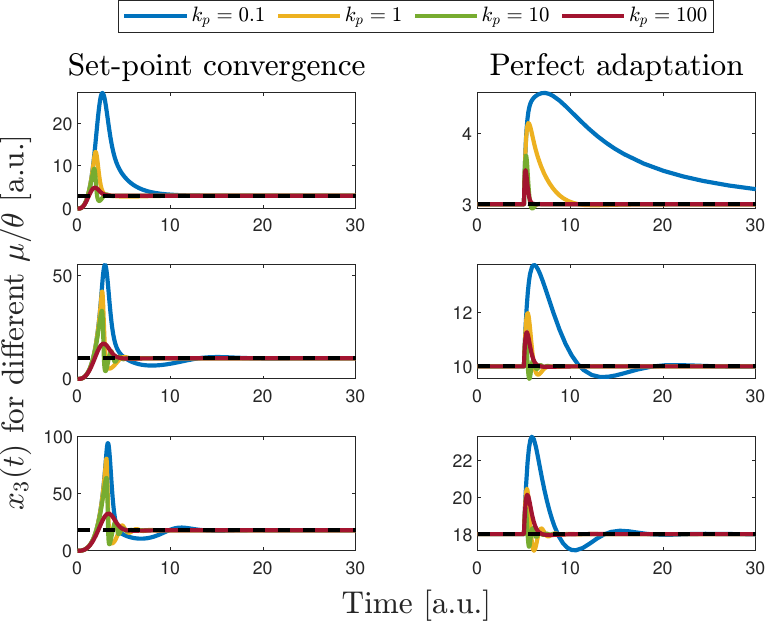}
  \end{center}
\end{minipage}
  \caption{\textbf{Left panel.} Spectral abscissa of the system associated with the \eqref{eq:RN:maturation2}, \eqref{eq:AIC:niAIC} with the parameters $\gamma_1=1$, $\gamma_2=\gamma_3=2$, $k_{21}=1$, $b_0=10$,  $k_{13}=1$, $\nu =3$, $\theta=1$, $\eta=100/k_p$ and for various values for $\mu$ and $k_p$. We have that which corresponds to
  $g_0=-5$, $g_n=-1/2$, and a spectral abscissa of $A$ given by $\alpha(A)=1.2695$ while the spectral abscissa of the closed-loop system reach smaller, negative values, which indicates that this controller is able to stabilize and improve the convergence properties of the system near the equilibrium point. \textbf{Right panel.} State response from a zero initial condition (first column) and as a response to a change of $k_{21}$ from 2 to 3.}\label{fig:persistent:2}
\end{figure}

\subsection{Partial structural stability analysis of the network controlled with an AIRC with output inhibition}

\black{Up to now all the results pertained to structural stability analysis of unimolecular networks controlled with an output-niAIC. We now connect the above results to the initial rein controller with output inhibition \eqref{eq:mainsystCL} for which we have now the following result:
\begin{theorem}
  Assume that $A$ is Metzler, output unstable, and nonsingular and that $g_0\ne 0$. Then, the unique equilibrium point of the system \eqref{eq:mainsystCL} is locally exponentially stable for all $\eta,k_p,\mu,\theta>0$ and all $k_i\in[0,\bar k_i)$.
\end{theorem}
\begin{proof}
Consider the general system \eqref{eq:mainsystCL} recalled below for simplicity:
  \begin{equation}\label{eq:kdlskdlskldksldksdkl}
  \begin{bmatrix}
   \dot x(t)\\
   \dot z_1(t)\\
   \dot z_2(t)
  \end{bmatrix}=F(x(t),z_1(t),z_2(t))+\begin{bmatrix}
    e_1\\
    0\\
    0
  \end{bmatrix}k_iz_1(t)
\end{equation}
where $k_i>0$ and
\begin{equation}\label{eq:kdlskdlskldksldksdkl2}
  F(x,z_1,z_2):=\begin{bmatrix}
    Ax+b_0-e_ne_n^Tk_pz_2\\
    \mu-\eta z_1z_2\\
    \theta x_n-\eta z_1z_2
  \end{bmatrix}.
\end{equation}

From Theorem \ref{th:main:unstable}, the unique equilibrium point of the above system for $k_i=0$ is structurally locally exponentially stable. We shall prove now that this is also the case for any sufficiently small $k_i>0$. Let $\bar{x}^*(k_i):=(x^*(k_i),z_1^*(k_i),z_2^*(k_i))$ be the equilibrium point of \eqref{eq:kdlskdlskldksldksdkl}-\eqref{eq:kdlskdlskldksldksdkl2}. Since the system is polynomial, this implies that the equilibrium point $\bar{x}^*(k_i)$ is a continuous function of $k_i$. In fact, by virtue of the implicit function theorem, we have that
\begin{equation}
  \dfrac{\d}{\d k_i}\begin{bmatrix}
    x^*(k_i)\\
    z_1^*(k_i)\\
    z_2^*(k_i)
  \end{bmatrix}=\tilde J(k_i)^{-1}\begin{bmatrix}
    e_1z_1^*(k_i)\\
    0\\
    0
  \end{bmatrix}
\end{equation}
where
\begin{equation}
  \tilde J(k_i)=\begin{bmatrix}
    A-e_ne_n^Tk_pz_2^*(k_i) & e_1k_i & -e_nk_pr\\
    0 & -\eta z_2^*(k_i) & -\eta z_1^*(k_i)\\
    \theta e_n^T & -\eta z_2^*(k_i) & -\eta z_1^*(k_i)
  \end{bmatrix},
\end{equation}
for all the $k_i$'s for which is it invertible. In fact, we proved that for $k_i=0$, the matrix $\tilde J(0)$ is Hurwitz stable for all $k_p,\eta>0$ provided that some conditions on $\mu,\theta>0$ and $A$ are satisfied. This implies that the equilibrium point $\bar{x}^*(k_i)$ is locally differentiable in $k_i$ around $k_i=0$. Now decompose $\tilde J(k_i)$ as
\begin{equation}
  J(k_i)=J_0+k_i J_1+(z_1^*(k_i)-z_1^*(0))J_2+(z_2^*(k_i)-z_2^*(0))J_3
\end{equation}
where
\begin{equation}
\begin{array}{rcl}
  J_0&=&\begin{bmatrix}
    A-e_ne_n^Tk_pz_2^*(0) & 0 & -e_nk_p\mu/\theta\\
    0 & -\eta z_2^*(0) & -\eta z_1^*(0)\\
    \theta e_n^T & -\eta z_2^*(0) & -\eta z_1^*(0)
  \end{bmatrix},\ J_1=\begin{bmatrix}
    0 & e_1 &0\\
    0 & 0 & 0\\
    0 & 0 & 0
  \end{bmatrix}\\
   J_2&=&\begin{bmatrix}
    0 & 0 &0\\
    0 & 0 & -\eta\\
    0 & 0 & -\eta
  \end{bmatrix},\ J_3=\begin{bmatrix}
    -e_ne_n^Tk_p & 0 &0\\
    0 & -\eta & 0\\
    0 &  -\eta & 0
  \end{bmatrix}.
\end{array}
\end{equation}
Therefore, we have that
\begin{equation}
  J(k_i)=J_0+k_i\left(J_1+J_2\left.\dfrac{\d z_1^*(k_i)}{\d k_i}\right|_{k_i=0}+J_3\left.\dfrac{\d z_2^*(k_i)}{\d k_i}\right|_{k_i=0}\right)+o(k_i)
\end{equation}
which shows that the eigenvalues are also locally continuous in $k_i$ at $k_i=0$. This implies that given all the other parameters of the system and the controller, there exists a $\bar k_i>0$ such that $\tilde J(k_i)$ is Hurwitz stable for all $k_i\in[0,\bar k_i]$.
\end{proof}}

\section{Structural stability of the niAIC - Nonlinear case}\label{sec:NL}

We provide immediate extensions of the results to other types of integral controllers that have been reported in the literature.

\subsection{Definitions, assumptions, and preliminary results}\label{sec:NL:prel}

\black{We now address the more general case where the network \eqref{eq:RN} is neither restricted to be unimolecular nor mass-action. The network is now described by the following model
\begin{equation}\label{eq:NL0}
\begin{array}{rcl}
  \dot{x}(t)&=&f(x(t))+b_0\\
  x(0)&=&x_0
\end{array}
\end{equation}
where $x(t),x_0\in\mathbb{R}^n$ are the state of the system and the initial condition, respectively. We assume here that the function $f$ is such that there exists a unique solution to that system which is defined for all times $t\ge0$. Additionally, since this model describes the evolution of the state of a reaction network, then the function $f_a$ is such that the solution $x(t)$ is nonnegative for all $t\ge0$ provided that $x(0),b_0$ are also nonnegative. Anticipating the interconnection of that network with an niAIC controller with output inhibition, we define the following extension
\begin{equation}\label{eq:NL}
\begin{array}{rcl}
  \dot{x}(t)&=&f(x(t))-e_nx_n(t)u(t)+b_0\\
  y(t)&=&x_n(t)\\
  x(0)&=&x_0
\end{array}
\end{equation}
where $u(t)\ge0$ is the input and $y(t)$ is the output of the system. It is immediate to see that this system is internally positive (i.e. for all $x(0),u(t),b_0\ge0$ we have that $x(t),y(t)\ge0$ for all $t\ge0$) since the solution to \eqref{eq:NL0} is nonnegative.}\\

\black{\begin{define}\label{def:sets}
  The geometry of equilibrium states and inputs is more complicated than in the unimolecular mass-action case and is described by the following sets:
\begin{itemize}
  \item the set of all possible equilibrium output values for a given input
  \begin{equation}
  \mathscr{Y}(u):=\left\{e_n^Tx:\exists x\in\mathbb{R}_{\ge0}^n\textnormal{ s.t. }f(x)-e_ne_n^Tu+b_0=0\right\},\ u>0
\end{equation}
\item the set of all possible equilibrium output values across all possible input values
  \begin{equation}
  \mathscr{Y}:=\left\{e_n^Tx:\exists (x,u)\in\mathbb{R}_{\ge0}^n\times \mathbb{R}_{>0} \textnormal{ s.t. }f(x)-e_ne_n^Tu+b_0=0\right\}=\bigcup_{u>0}\mathscr{Y}(u),
\end{equation}
\item the set of equilibrium behaviors for a given equilibrium output
\begin{equation}
\mathscr{B}(y^\circ):=\left\{(x,u)\in\mathbb{R}^n_{\ge0}\times\mathbb{R}_{>0}: f(x)-e_ne_n^Tu+b_0=0, e_n^Tx=y^\circ\right\},\ y^\circ\in\mathscr{Y}
\end{equation}
\item the set of equilibrium inputs achieving a given value for the equilibrium output
\begin{equation}
\begin{array}{rcl}
  \mathscr{U}(y^\circ )&:=&\left\{u\in\mathbb{R}_{>0}:\exists x\in\mathbb{R}_{\ge0}^n \textnormal{ s.t. }f(x)-e_ne_n^Tu+b_0=0, e_n^Tx=y^\circ\right\},\ y^\circ\in\mathscr{Y}\\
                                        &:=&\left\{u\in\mathbb{R}_{>0}:\exists x\in\mathbb{R}_{\ge0}^n\textnormal{ s.t. }(x,u)\in\mathscr{B}(y^\circ)\right\},\ y^\circ\in\mathscr{Y},
\end{array}
\end{equation}
\item the set of equilibrium states corresponding to a given equilibrium output
\begin{equation}
\begin{array}{rcl}
  \mathscr{X}(y^\circ )&:=&\left\{x\in\mathbb{R}_{\ge0}^n:\exists u\in\mathbb{R}_{>0} \textnormal{ s.t. }f(x)-e_ne_n^Tu+b_0=0, e_n^Tx=y^\circ\right\},\ y^\circ\in\mathscr{Y}\\
                                        &:=&\left\{x\in\mathbb{R}_{\ge0}^n:\exists u\in\mathbb{R}_{>0}\textnormal{ s.t. }(x,u)\in\mathscr{B}(y^\circ)\right\},\ y^\circ\in\mathscr{Y}.
\end{array}
\end{equation}
\end{itemize}
We also define the global sets
\begin{equation}
\mathscr{B}:=\bigcup_{y^\circ\in\mathscr{Y}}\mathscr{B}(y^\circ),\ \mathscr{U}:=\bigcup_{y^\circ\in\mathscr{Y}}\mathscr{U}(y^\circ),\ \textnormal{and }\mathscr{X}:=\bigcup_{y^\circ\in\mathscr{Y}}\mathscr{X}(y^\circ).
\end{equation}
\end{define}
}


\black{We make now the following assumption:
\begin{assumption}\label{hyp:nonempty}
  The set $\mathscr{Y}$ is non-empty and connected, and for all $y^\circ\in\mathscr{Y}$, the sets are $\mathscr{U}(y^\circ )$ and $\mathscr{X}(y^\circ )$ are singletons.
\end{assumption}}
\black{This assumption means that for any equilibrium value for the output, there exists a unique associated equilibrium state and constant input. Note that this does not imply that the system necessarily has a unique equilibrium point and the system is still allowed to have multiple equilibrium points. However, only one of them will be associated with the specified value for the output. A simple example is the system $\dot{x}=x(x-u)$, $y=x$. This assumption is not required but dramatically simplifies the exposition of the results.}\\


\black{Nevertheless, this is not enough for our results which also require the existence of a function that maps elements of $\mathscr{U}$ to elements of $\mathscr{X}$. The (local) existence of such a function is guaranteed by the following assumption
\begin{assumption}\label{hyp:jacobian}
  The matrices
  \begin{equation}\label{eq:implicit}
    J(x) \textnormal{ and } J(x)-ue_ne_n^T
  \end{equation}
  are invertible for all $(x,u)\in\mathscr{E}$ where $J(x):=\partial f(x)/\partial x$.
\end{assumption}

This assumption can be relaxed to consider a subset $\mathscr{R}$ of $\mathscr{E}$ on which those matrices are invertible. As for the previous assumption, this is not considered so as to simplify the exposition of the results but all the subsequent results can be adapted to this case at the expense of a higher notational cost.  This leads to the following result:
\begin{proposition}\label{prop:monotonic}
Assume that the conditions in Assumption \ref{hyp:nonempty} and \ref{hyp:jacobian} hold. Then, there exists (at least locally) a function $\sigma:\mathscr{U}\mapsto\mathscr{X}$ such that $f(\sigma(u))-e_ne_n^T\sigma(u)u+b_0=0$. Moreover, we have that
\begin{equation}
  \dfrac{\d F(u)}{\d u}=\dfrac{-g_n(u)}{1+g_n(u)u}F(u)
\end{equation}
where  $g_n(u):=-e_n^TJ(\sigma(u))^{-1}e_n$ and $F:=\pi_n\circ\sigma$ where $\pi_n$ denotes the projection operator on the $n$-th component; observe that $y_n^*(u)=e_n^Tx^*(u)=e_n^T\sigma(u)=F(u)$, locally on $u\in\mathscr{U}$.\\

Moreover, if there exists an $c>0$ such that
$$\sup_{u\in\mathscr{U}}\left|\dfrac{-g_n(u)y^*(u)}{1+g_n(u)u}\right|\le c,$$
then the function $F(u)$ is globally defined on $u\in\mathcal{U}$.
\end{proposition}
\begin{proof}
The existence of the function $\sigma$ is a consequence of Assumption \ref{hyp:jacobian} and the implicit function theorem. The same theorem also yields the expression
\begin{equation}\label{eq:dydu}
  \dfrac{\d\sigma(u)}{\d u}=(J(\sigma(u))-e_ne_n^Tu)^{-1}e_ne_n^T\sigma(u)
\end{equation}
at all the points where it is defined. In particular, this implies that
\begin{equation}
  \dfrac{\d y^*(u)}{\d u}=e_n^T(J(\sigma(u))-e_ne_n^Tu)^{-1}e_ny^*(u).
\end{equation}
Using Sherman-Morrison formula, we get that
\begin{equation}
\begin{array}{rcl}
   (J(\sigma(u))-e_ne_n^Tu)^{-1}&=&J(\sigma(u))^{-1}-\dfrac{(-1)J(\sigma(u))^{-1}e_ne_n^TJ(\sigma(u))^{-1}u}{1+(-1)e_n^TJ(\sigma(u))^{-1}e_nu}\\
                                       &=& J(\sigma(u))^{-1}+\dfrac{J(\sigma(u))^{-1}e_ne_n^TJ(\sigma(u))^{-1}u}{1+g(u)u}
\end{array}
\end{equation}
where $g_n(u)=-e_n^TJ(\sigma(u))^{-1}e_n$. Therefore,
\begin{equation}
\begin{array}{rcl}
  e_n^T(J(\sigma(u))-e_ne_n^Tu)^{-1}e_n&=&e_n^T\left(J(\sigma(u))^{-1}+\dfrac{J(\sigma(u))^{-1}e_ne_n^TJ(\sigma(u))^{-1}u}{1+g(u)u}\right)e_n\\
                                    &=& -g_n(u)+\dfrac{g_n(u)^2u}{1+g_n(u)u}\\
                                    &=& -\dfrac{g_n(u)}{1+g_n(u)u},
\end{array}
\end{equation}
from which the first result follows. Finally, the global existence of $F$ on $\mathscr{U}$ comes from the linear growth of the right hand side of the differential equation and the boundedness of the Lipschitz constant on $\mathscr{U}$.
\end{proof}}

\black{Unlike in the unimolecular mass-action case, the "gain" $g_n(u)$ now depends on the value of the control input, which may lead to more complex behaviors than in the unimolecular mass-action case. Yet, it still exhibits useful regularity properties, as stated in the result below:
\begin{proposition}
  Assume that the conditions in Assumption \ref{hyp:nonempty} and \ref{hyp:jacobian} hold, and that $J(x)-ue_ne_n^T$ is Hurwitz stable for all $(x,u)\in\mathscr{B}$. Then, we have that
\begin{enumerate}[(a)]
  \item $\sign(1+g_n(u)u)=\sign(\det(-J(\sigma(u))))$, and
  \item $\sign(g_n(u))=\sign\left(\dfrac{\det(-J_{11}(\sigma(u)))}{\det(-J(\sigma(u)))}\right)$.
\end{enumerate}
Moreover, if $J(x)$ is Hurwitz stable for all $x\in\mathscr{X}$, we have that
\begin{enumerate}[(a)]
 \setcounter{enumi}{2}
  \item $1+g_n(u)u>0$ holds for all $u\in\mathscr{U}$, and
  \item $g_n(u)>0$ if and only if $\det(-J_{11}(\sigma(u)))>0$.
\end{enumerate}
\end{proposition}
\begin{proof}
   Since  $J(x)-ue_ne_n^T$ is Hurwitz stable on $\mathscr{B}$ and $J(x)$ invertible on $\mathscr{X}$, then we have that $0<\det(-J(x)+e_ne_n^Tu)=\det(-J(x))(1+g_n(u)u)$. Therefore $1+g_n(u)u$ has the same sign as $\det(-J(x))$ on $\mathscr{B}$. Similarly,
   \begin{equation}
   g_n(u)=-e_n^TJ(\sigma(u))^{-1}e_n=\dfrac{\det(-J_{11}(\sigma(u)))}{\det(-J(\sigma(u)))},
   \end{equation}
   which proves the second statement. Assuming that $J(\sigma(u))$ is Hurwitz stable on $\mathscr{U}$ implies that $\det(-J(\sigma(u)))>0$ on $\mathscr{U}$, from which the two last statements follow.
\end{proof}}

%
%

\black{\begin{assumption}\label{hyp:nonlinear_gain}
  The nonlinear gain function  $F=\pi_n\circ\sigma:\mathscr{U}\mapsto\mathscr{Y}$, defined in Proposition \ref{prop:monotonic}, is bijective.
\end{assumption}

This leads us to the following result:
\begin{proposition}
  Consider the system \eqref{eq:NL} which is assumed to satisfy Assumption \ref{hyp:nonempty}, \ref{hyp:jacobian}, and \ref{hyp:nonlinear_gain}. Then, the equilibrium point $(x^*(r), u_*(r))$ associated with the steady-state output $y^*=r\in\mathscr{Y}$  for the system \eqref{eq:NL} is given by
  \begin{equation}
  u_*(r)=F^{-1}(r),\textnormal{and\ } x^*(r)=\sigma(F^{-1}(r)).
\end{equation}
\end{proposition}
\begin{proof}
  This follows from Proposition \ref{prop:monotonic} and Assumption \ref{hyp:nonlinear_gain}.
\end{proof}}


\subsection{General results}

\black{We now move on with the main results of the section. The closed-loop network consisting of the interconnection of the network \eqref{eq:RN} described by \eqref{eq:NL0} and the niAIC \eqref{eq:AIC:niAIC_output} is represented by the following system
\begin{equation}\label{eq:NLCL}
\begin{array}{rcl}
  \dot{x}(t)&=&f(x(t))-e_nz_\iota(t)x_n(t)+b_0\\
   \dot{z}_1(t)&=&\mu-\eta k_pz_1(t)z_2(t)\\
   \dot{z}_2(t)&=&\theta e_n^Tx-\eta k_pz_1(t)z_2(t),
\end{array}
\end{equation}
where $(x(0),z_1(0),z_2(0))=(x_0,z_{1,0},z_{2,0})$. Note that we have left free the choice of the actuating species which can either be $z_1$ or $z_2$ depending on whether $\iota=1$ or $\iota=2$, respectively. This leads to the following result:}

\black{\begin{proposition}
Assume that Assumption \ref{hyp:nonempty},  \ref{hyp:jacobian}, and \ref{hyp:nonlinear_gain} are satisfied for the systems \eqref{eq:NL0} and \eqref{eq:NL}. Then, the unique equilibrium point of the closed-loop system \eqref{eq:NLCL} is given by
\begin{equation}\label{eq:point3}
 x^*(r)=g(F^{-1}(r)),\ z_\iota^*(r)=\dfrac{\mu}{\eta  F^{-1}(r)}\ \textnormal{and }z_{3-\iota}^*(r)=\dfrac{F^{-1}(r)}{k_p}.
\end{equation}

Moreover, defining $J^*(r):=J(x^*(r))$, we define the transfer function
\begin{equation}
  H_n(s,r)=e_n^T(sI-(J^*(r)-F^{-1}(r)e_ne_n^T))^{-1}e_n.
\end{equation}

Finally, the linearized dynamics of the system \eqref{eq:NLCL} about the equilibrium point \eqref{eq:point3} when $\iota=1$ is governed by the matrix
\begin{equation}\label{eq:linearized:general:z1}
M_1(r):=\begin{bmatrix}
    J^*(r)-F^{-1}(r)e_ne_n^T & -k_pe_nr & 0\\
  0 & -\mu k_p/F^{-1}(r) & -\eta F^{-1}(r)\\
  \theta e_n^T & -\mu k_p/F^{-1}(r) & -\eta F^{-1}(r)
\end{bmatrix}
\end{equation}
whereas it is given by
\begin{equation}\label{eq:linearized:general}
M_2(r):=\begin{bmatrix}
    J^*(r)-F^{-1}(r)e_ne_n^T & 0 & -k_pe_nr\\
  0 & -\eta F^{-1}(r) & -\mu k_p/F^{-1}(r)\\
  \theta e_n^T & -\eta F^{-1}(r) & -\mu k_p/F^{-1}(r)
\end{bmatrix}
\end{equation}
when $\iota=2$.
\end{proposition}
\begin{proof}
 The proof follows from the previous results and assumptions and the standard linearization procedure.
\end{proof}}

\black{It is important to clarify under what conditions there exists a $k_p>0$ such that the equilibrium point \eqref{eq:point3} is locally asymptotically stable for the dynamics \eqref{eq:NLCL}. This is stated in the result below:
\begin{proposition}
   Assume that Assumption \ref{hyp:nonempty},  \ref{hyp:jacobian}, and \ref{hyp:nonlinear_gain} are satisfied for the systems \eqref{eq:NL0} and \eqref{eq:NL}. Assume further that $r$ is admissible (i.e. $r\in\mathscr{Y}$), that $J^*(r)-F^{-1}(r)e_ne_n^T$ is Hurwitz stable, and let $\iota\in\{1,2\}$. Then, the following statements hold:
   \begin{enumerate}[(a)]
     \item If $(-1)^\iota H_n(0,r)>0$, then there exists a sufficiently small $k_p>0$ such that the equilibrium point \eqref{eq:point3} is locally exponentially stable for the dynamics \eqref{eq:NLCL}
     \item If $(-1)^\iota H_n(0,r)<0$, then, for all $k_p>0$, the equilibrium point  \eqref{eq:point3} is unstable for the dynamics \eqref{eq:NLCL}.
   \end{enumerate}
\end{proposition}
\begin{proof}
  The proof follows from the same arguments as the proof of Proposition \ref{prop:small_kp} and relies on proving that there exists a $k_p>0$ such that the matrix \eqref{eq:linearized:general}  or \eqref{eq:linearized:general} is Hurwitz stable. In the current case, the zero-eigenvalue locally obeys the expression
  \begin{equation}
  \begin{array}{rcl}
      \lambda(k_p)     &=&      (-1)^\iota\mu e_n^T(J^*(r)-F^{-1}(r)e_ne_n^T)^{-1}e_n k_p+o(k_p)\\
                                    &=&       (-1)^{\iota-1}\mu H_n(0,r)k_p+o(k_p)\\
                                    &=&       (-1)^{\iota}\theta\left.\dfrac{\d F(u)}{\d u}\right|_{u=F^{-1}(r)}k_p+o(k_p)\\
                                    &=&       (-1)^{\iota-1}\theta\dfrac{g_n(F^{-1}(r))}{1+g_n(F^{-1}(r))F^{-1}(r)}k_p+o(k_p).
  \end{array}
  \end{equation}
One can observe that stabilization is only possible if $(-1)^{\iota}H_n(0,r)>0$, which proves the first statement of the result.\\

Now assume that $(-1)^\iota H_n(0,r)<0$. Then, using the Schur complement and the properties of the determinant, we have that
\begin{equation}
     \det(-M_\iota(r))=\xi(r)(-1)^\iota H_n(0,r),\ \iota=1,2,
\end{equation}
where $\xi(r):=\eta F^{-1}(r)k_p\mu\det(-J^*(r)+F^{-1}(r)e_ne_n^T)$. Since $J^*(r)-F^{-1}(r)e_ne_n^T$ is Hurwitz stable, then $\det(-J^*(r)+F^{-1}(r)e_ne_n^T)>0$ and, therefore, $\xi(r)>0$, which implies that $\det(-M_\iota(r))<0$ since $(-1)^\iota H_n(0,r)<0$, implying, in turn, that $M_\iota(r)$ has at least one eigenvalue with positive real part, implying that the equilibrium point is unstable. This proves the second statement.
\end{proof}}

\black{It seems important to discuss the meaning of that result. It was previously explained that it is necessary that the feedback loop from $\Yz$ to $\Yz$ be negative and the map from $\mu$ to $\Yz$ be increasing for the equilibrium point to be locally exponentially stable as illustrated in Figure~\ref{fig:zvsz2}.  For those properties to be satisfied, the function $F$ needs to be increasing (i.e. activating) whenever $\iota=1$ and decreasing (i.e. inhibiting) whenever $\iota=2$.}\\

\begin{figure}
  \centering
  \includegraphics[width=.5\textwidth]{./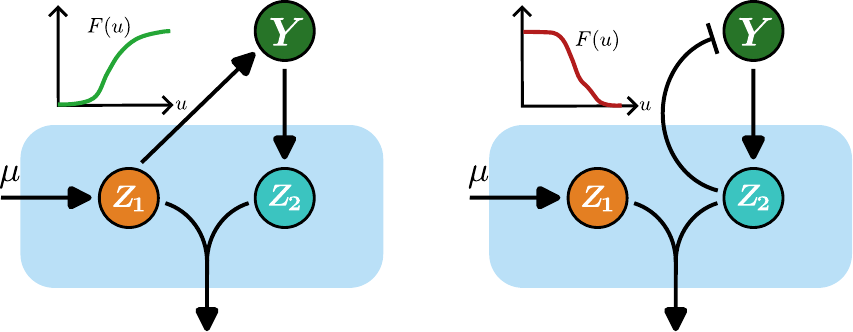}
  \caption{Valid control topologies. \textbf{Left.} Case $\iota=1$ where $\Z{1}$ is the actuating species. In that case, the map $F$ needs to be increasing for the map $\mu\mapsto z_1^*$ to be increasing and the feedback loop from $\Yz$ to $\Yz$ to be negative. \textbf{Left.} Case $\iota=2$ where $\Z{2}$ is the actuating species. In that case, the map $F$ needs to be decreasing for the map $\mu\mapsto z_1^*$ to be increasing and the feedback loop from $\Yz$ to $\Yz$ to be negative.}\label{fig:zvsz2}
\end{figure}


\black{We are now ready to state the main result of this section:
\begin{theorem}\label{th:generalNL}
Assume that Assumption \ref{hyp:nonempty},  \ref{hyp:jacobian}, and \ref{hyp:nonlinear_gain} are satisfied for the systems \eqref{eq:NL0} and \eqref{eq:NL}. Assume further that $r$ is admissible (i.e. $r\in\mathscr{Y}$), that $H_n(0,r)>0$, and that one of the following equivalent statements holds:
  \begin{enumerate}[(a)]
    \item There exist a matrix $P_1(r)=P_1(r)^T\succ0$ and an $\eps>0$ such that
    \begin{equation}\label{eq:LMI:NL}
        (J^*(r)-F^{-1}(r)e_ne_n^T)^TP(r)+P(r)(J^*(r)-F^{-1}(r)e_ne_n^T)+2\eps e_ne_n^T\prec0
    \end{equation}
    where
    \begin{equation}
        P(r) = \begin{bmatrix}
          P_1(r) & 0\\
          0 & 1
        \end{bmatrix}.
    \end{equation}
    \item There exists an $\eps>0$ such that the system $(J_{11}^*(r), J_{12}^*(r), -J_{21}^*(r), u_*(r)-J_{22}^*(r)-\eps)$ is SPR.
  \end{enumerate}
  Then, the equilibrium point \eqref{eq:point3} of the system \eqref{eq:NLCL} is locally exponentially stable for all $\eta,k_p>0$. If it holds for all $r\in\mathscr{Y}$, then the closed-loop system is also structurally stable for all admissible set-point.
\end{theorem}
\begin{proof}
  The equivalence between the statements follows from the fact that the condition \eqref{eq:LMI:NL} in the first statement can be written as
  \begin{equation}\label{eq:LMI:NL2}
  \begin{bmatrix}
    P_1J_{11}^*(r)+J_{11}^*(r)^TP_1(r) & P_1(r) J_{12}^*(r)+J_{21}^*(r)^T\\
    \star & 2(J_{22}^*(r)-u_*+\eps)
  \end{bmatrix}\prec0
\end{equation}
which is a time-domain characterization of the SPR condition; i.e. input strict passivity or strong strict positive realness. Now, let us prove that all the conditions for SPR are met. First of all, the condition \eqref{eq:LMI:NL} implies that $J^*(r)-F^{-1}(r)e_ne_n^T$ is Hurwitz stable (stable poles). Moreover, the condition \eqref{eq:LMI:NL2} implies that $J_{22}^*(r)-u_*<0$  and that $J_{11}^*(r)$ is Hurwitz stable (stable zeros). Those two properties all together implies that the condition at infinity holds. Finally, from the Kalman-Yakubovich-Popov Lemma we have that $\Re[H_n(j\omega,r)]\ge \eps |H_n(j\omega,r)|^2$. Since the zeros of the transfer function $H_n(s,r)$ are stable, then $ |H_n(j\omega,r)|\ne0$ for all $\omega\in\mathbb{R}$ and from the assumption that $H_n(0,r)>0$, we get that $\Re[H_n(j\omega,r)]>0$ for all $\omega\in\mathbb{R}$. The conclusion then follows.
\end{proof}}

\black{We have the immediate corollary:
\begin{corollary}\label{cor:generalNL_triangular}
  Assume that Assumption \ref{hyp:nonempty},  \ref{hyp:jacobian}, and \ref{hyp:nonlinear_gain} are satisfied for the systems \eqref{eq:NL0} and \eqref{eq:NL}. Assume further that $r$ is admissible (i.e. $r\in\mathscr{Y}$), that $H_n(0,r)>0$, and that one of the following statements holds:
  \begin{enumerate}[(a)]
    \item\label{cor:generalNL_triangular:st1} $J^*_{12}(r)=0$ and $J^*(r)$ is Hurwitz stable;
    \item\label{cor:generalNL_triangular:st2} $J^*_{21}(r)=0$ and $J^*(r)$ is Hurwitz stable;
  \end{enumerate}
  Then, the equilibrium point \eqref{eq:point3} of the system \eqref{eq:NLCL} is locally exponentially stable for all $\eta,k_p>0$.
\end{corollary}
\begin{proof}
From the conditions, we have that $J^*(r)$ is triangular with $J^*_{22}(r)<0$, which implies that $J^*_{22}(r)-u_*<0$. Therefore, the linear matrix inequality \eqref{eq:LMI:NL2} holds if and only if
  \begin{equation}\label{eq:kdsldklka}
     P_1(r)J^*_{11}(r)+J^*_{11}(r)^TP_1(r)+ \dfrac{1}{2(u_*-J^*_{22}(r))}(P_1(r) J^*_{12}(r)+J^*_{21}(r)^T)(J^*_{12}(r)^TP_1(r)+J^*_{21}(r))\prec0.
  \end{equation}

\textbf{Case \eqref{cor:generalNL_triangular:st1}.} Assume that $ J^*_{12}(r)=0$. Therefore, the inequality \eqref{eq:kdsldklka} reduces to
  \begin{equation}\label{eq:kdsldklka2}
     P_1(r)J^*_{11}(r)+J^*_{11}(r)^TP_1(r)+\dfrac{1}{2(u_*-J^*_{22}(r))}J^*_{21}(r)^TJ^*_{21}(r)\prec0.
  \end{equation}
  Since $J^*_{11}(r)$ is Hurwitz stable, then one can find a $P_1(r)$ such that the above inequality holds, which proves this case.\\

  \textbf{Case \eqref{cor:generalNL_triangular:st2}.} Assume that $J^*_{21}(r)=0$ and let $Q_1(r)$ be such that $Q_1(r)J^*_{11}(r)+J^*_{11}(r)Q_1(r)=-R_1(r)$ for some $R_1(r)\succ0$. Such a $Q_1(r)$ exists as $J^*_{11}(r)$ is Hurwitz stable. Let $P_1(r)=\eps(r) Q_1(r)$ for some $\eps(r)>0$. Then, the condition \eqref{eq:kdsldklka} reduces to
    \begin{equation}
    -\eps(r) R_1(r)+ \dfrac{\eps(r)^2}{2(u_*-J^*_{22}(r))}Q_1(r) J^*_{12}(r)J^*_{12}(r)^TQ_1(r)\prec0.
  \end{equation}
  Dividing by $\eps(r)>0$, this equivalent to say that
    \begin{equation}
    -R_1(r)+ \dfrac{\eps(r)}{2(u_*-J^*_{22}(r))}Q_1(r) J^*_{12}(r)J^*_{12}(r)^TQ_1(r)\prec0.
  \end{equation}
  This inequality is readily shown to be negative definite provided that $\eps(r)>0$ is small enough, which proves this case.
\end{proof}}

\black{Let us illustrate Theorem \ref{th:generalNL} through this simple example
\begin{example}
    Consider a gene expression network where the protein represses its own production described by
  \begin{equation}\label{eq:ex:feedback}
    \begin{array}{rcl}
              \dot{x}_1&=&-\gamma x_1+\dfrac{k_{12}}{1+x_2}+b_0,\\
              \dot{x}_2&=&k_{21}x_1-\gamma x_2-ux_2
    \end{array}
  \end{equation}
  where $x_1$ denotes the mRNA concentration and $x_2$ the protein concentration. The parameters $\gamma,k_{12},k_{21},b_0$ are all positive. The admissibility set for this system is given by
  \begin{equation}
    \mathscr{Y}:=(0,r_\mn)
  \end{equation}
  where $r_{\mn}$ is the unique positive root of the polynomial
\begin{equation}
  Y(r):=\gamma^2r^2+r(\gamma^2-k_{21}b_0)-k_{21}(k_{12}+b_0).
\end{equation}
  Moreover, the sets
  \begin{equation}
    \mathscr{X}(y^\circ)=\left\{\left(\dfrac{1}{\gamma}\left[\frac{k_{12}}{1+y^\circ}+b_0\right], y^\circ\right)\right\}
  \end{equation}
  and
  \begin{equation}
    \mathscr{U}(y^\circ)=\left\{-\gamma+\dfrac{k_{21}}{\gamma y^\circ}\left[\dfrac{k_{12}}{1+y^\circ}+b_0\right]\right\},
  \end{equation}
  both consist of singletons, as required by Assumption \ref{hyp:nonempty}. The Jacobian of  the system \eqref{eq:ex:feedback} with $u=0$ is given by
  \begin{equation}
    J(x)=\begin{bmatrix}
        -\gamma & -\dfrac{k_{12}}{(1+x_2)^2}\\
      k_{21} & -\gamma
    \end{bmatrix}
  \end{equation}
  and is invertible for all $x_2\ge0$, and so is $J(x)-ue_ne_n^T$ for all $x\ge0$ and $u\ge0$. Therefore, the conditions in Assumption \ref{hyp:jacobian} are satisfied. As a result, maps $\sigma: \mathscr{U}\mapsto \mathscr{X}$ and $F: \mathscr{U}\mapsto \mathscr{Y}$ exist. Furthermore, we have that $g_n(u)=\gamma(1+x_2^*(u))^2/(\gamma^2+k_{12}k_{21})>\gamma/(\gamma^2+k_{12}k_{21})$, which implies, by virtue of Proposition \ref{prop:monotonic}, that the map $F$ is monotonically decreasing. This shows that the condition in Assumption \ref{hyp:nonlinear_gain} is also satisfied.\\

The Jacobian evaluated at the equilibrium point corresponding to $x_2^*=r$ is given by
  \begin{equation*}
    J^*(r)=\begin{bmatrix}
      -\gamma & -\dfrac{k_{12}}{(1+r)^2}\\
      k_{21} & -\gamma
    \end{bmatrix}
  \end{equation*}
  and the associated transfer function is $$H_n(s,r)=\dfrac{s+\gamma}{s^2+(2\gamma+u_*(r))s+\gamma(\gamma+u_*(r))+k_{21}k_{12}/(1+r)^2}.$$ From the Routh-Hurwitz criterion, the transfer function $H_n(s,r)$ is stable, has stable zeros and verifies $H(0,r)>0$. Therefore, we just need to show the existence of a $P(r)\succ0$ such that
  \begin{equation*}
    \begin{bmatrix}
    -2\gamma P(r) & -\dfrac{k_{12}}{(1+r)^2}P(r)+k_{21}\\
    -\dfrac{k_{12}}{(1+r)^2}P(r)+k_{21} & -2(\gamma+u_*(r))
  \end{bmatrix}\prec0.
  \end{equation*}
Alternatively, we may check whether the transfer function associated with the system
\begin{equation}
  \begin{array}{rcl}
    \dot{v}&=&-\gamma v-\dfrac{k_{12}}{(1+r)^2}w\\
    z&=&-k_{21}v+(\gamma+u_*(r))w
  \end{array}
\end{equation}
is strictly positive real. It can be seen that this is the case since it is a first order stable transfer function of relative degree 0 with stable zeros, with positive gain and positive feedthrough. Therefore, the closed-loop system consisting of \eqref{eq:ex:feedback} and the niAIC \eqref{eq:AIC:niAIC_output} is structurally stable with respect to $k_p,\eta>0$ and $r>r_{\mn}$.
\end{example}}

\begin{figure}[H]
  \centering
  \begin{minipage}[h]{0.45\linewidth}
    \begin{center}
    \includegraphics[width=\textwidth]{./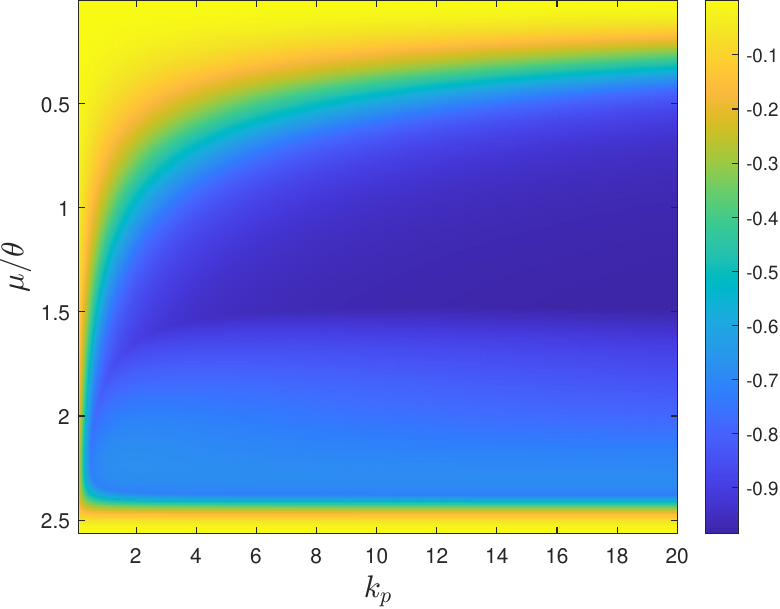}
  \end{center}
\end{minipage}
\begin{minipage}[h]{0.45\linewidth}
    \begin{center}
  \includegraphics[width=\textwidth]{./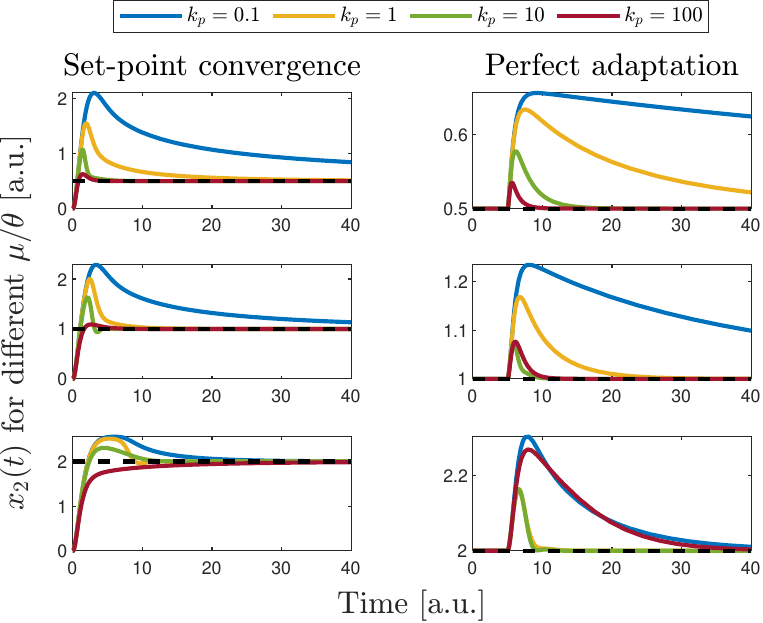}
  \end{center}
\end{minipage}
  \caption{\textbf{Left panel.} Spectral abscissa of the system associated with the \eqref{eq:ex:feedback}, \eqref{eq:AIC:niAIC} with the parameters $\gamma=1$, $k_{21}=2$, $b_0=1$,  $k_{21}=2$, $\theta=1$, $\eta=100/k_p$ and for various values for $\mu$ and $k_p$. We have that $r_{\mn}=2.5616$. \textbf{Right panel.} State response from a zero initial condition (first column) and as a response to a change of $k_{12}$ from 1 to 2.}
\end{figure}

\subsection{Cooperative and Michaelis-Menten networks}

\black{In some cases, the conditions in Theorem \ref{th:generalNL} can be shown to be automatically satisfied (as in the unimolecular case) and the only conditions that will need to be checked are those stated in the various assumptions introduced in Section \ref{sec:NL:prel}.\\

The class of cooperative systems benefits from such a simplified procedure, as stated in the result below:
\begin{proposition}\label{prop:cooperative}
  Assume that the set of admissible set-point $\mathscr{Y}$ is non-empty, that the function $f$ is cooperative\footnote{A function $f:X\mapsto X$, $X\subset\mathbb{R}^n$,  is cooperative if $\partial f/\partial x$ is Metzler for all $x\in X$.}  and such that $J^*(r)$ is Hurwitz stable for all $r\in\mathscr{Y}$. Then, the equilibrium point \eqref{eq:point3} of the system \eqref{eq:NLCL} is locally exponentially stable for all $\eta,k_p>0$ and all $r\in\mathscr{Y}$.
\end{proposition}
\begin{proof}
  Since $f$ is cooperative, then $J(x)$ is Metzler. Moreover, if $J^*(r)$ is Hurwitz stable for all $r\in\mathscr{Y}$, then so is $J^*(r)-u_*(r)e_ne_n^T$, and this implies that both matrices are invertible, which shows that the conditions in Assumption \ref{hyp:jacobian} are satisfied. Since $J^*(r)$ is Metzler and Hurwitz stable, then we have that $g_n(u)>0$ (see Lemma \ref{lemma:gains}) for all $u\in\mathscr{U}$, which implies that $F(u)$ is monotonically decreasing on $\mathscr{U}$ and shows that the condition in Assumption \ref{hyp:nonlinear_gain} is verified. From Lemma \ref{lemma:gains}, we also have that $H_n(0,r)>0$. Finally, using the fact that  $J^*(r)-u_*(r)e_ne_n^T$ is Metzler and Hurwitz stable, Proposition \ref{prop:MHS} allows us to state that there exists a diagonal matrix $P_1(r)\succ0$ that satisfies \eqref{eq:LMI:NL}. The result then follows from Theorem \ref{th:generalNL}.
\end{proof}}

\black{The class of cooperative networks is very similar to unimolecular ones, which are cooperative by construction. As a result, extensions to the case of output-unstable systems are rather immediate at the expense of additional notational burden stemming from the nonlinear nature of the network. For this reason, those results are omitted. Without entering into details, it is also possible to go beyond the class of cooperative systems by considering a more general class of systems which are cooperative with respect to a different cone than the nonnegative orthant; see e,g, \cite{Walcher:01}. Indeed, if one can find a diagonal matrix $S(r)$ with diagonal entries in $\{-1,1\}$ such that $S(r)J^*(r)S(r)$ is Metzler, one can show that there also exists a diagonal matrix $P_1(r)$ such that the inequality \eqref{eq:LMI:NL} holds. This implies that networks described by such matrices also exhibit the properties stated in Proposition \ref{prop:cooperative}.}\\

\black{Another important class of networks having beneficial properties is the class of networks governed by a combination of linear mass-action and Michaelis-Menten dynamics. The following lemma summarizes some important results proven in \cite{Blanchini:23}:
\begin{lemma}\label{lemma:MM}
  Assume that \eqref{eq:NL0} describes a reaction network with linear mass-action and Michaelis-Menten dynamics, that is, $f(x)=Ax+N(x)$ where $A$ is Metzler and Hurwitz stable and where $N(x)$ contains the Michaelis-Menten terms. Assume further that the graph of the network is strongly connected and at least one entry of $b_0$ is positive. Then,
  \begin{enumerate}[(a)]
    \item There exists a unique equilibrium point $x^*$ in the positive orthant.
    \item The Jacobian matrix $J(x^*)$ is row diagonally dominant with negative diagonal and is, therefore, Hurwitz stable, which implies that the unique equilibrium point $x^*$ is locally exponentially stable.
  \end{enumerate}
\end{lemma}
The condition that $b_0$ is nonzero is here to ensure that there at least one molecular species is constitutively produced. The strong connectedness condition of the graph of the network enforces a "mixing property" guaranteeing that all the species act on each other in a persistent way, making the equilibrium point positive. This latter condition is sufficient only for the positivity of the equilibrium point and weaker ones can be formulated, possibly on a case-by-case basis.\\}

\black{Based on the above result, we can state our main result regarding the control of Michaelis-Menten networks with unimolecular mass-action reactions using an output niAIC:
\begin{theorem}\label{th:stability:MM}
Assume that Assumption \ref{hyp:nonempty} and \ref{hyp:nonlinear_gain} are satisfied for the systems \eqref{eq:NL0} and \eqref{eq:NL}, which represents a Michaelis-Menten reaction networks with $f(x)=Ax+N(x)$ where $A$ Metzler and Hurwitz stable and $N(x)$ contains the nonlinear Michaelis-Menten terms.  Assume further that the graph of the network is strongly connected, that $b_0\ne0$, that $r$ is admissible (i.e. $r\in\mathscr{Y}$), that $\mathscr{X}\subset\mathbb{R}_{>0}^d$, and that $H_n(0,r)>0$.\\

Then, for all $r\in\mathscr{Y}$ the equilibrium point \eqref{eq:point3} of the system \eqref{eq:NLCL} is locally exponentially stable for all $\eta,k_p>0$.
\end{theorem}
\begin{proof}
From Lemma \ref{lemma:MM}, the matrix $J^*(r)$ is Hurwitz stable for all $r\in\mathscr{Y}$ because it is row diagonally dominant with negative diagonal. This also implies that both $J^*(r)-u_*(r)e_ne_n^T$ and $J_{11}^*(r)$ are row diagonally dominant with negative diagonal. This shows that the conditions in Assumption \ref{hyp:jacobian} are verified. It remains to show that one of the conditions in Theorem \ref{th:generalNL} is satisfied. The existence of a diagonal matrix $P_1(r)$ is automatically follows from \cite{Kaszkurewicz:00,Kushel:19} and the fact that the matrix $J_{11}^*(r)$ is row diagonally dominant with negative diagonal.
\end{proof}}

\black{\section{Extension to other types of integral controllers}

\subsection{Exponential integral controllers}\label{sec:exp}

We now consider the following class of exponential integral controller \cite{Briat:16a,Xiao:18b,Briat:19:Logistic}
\begin{equation}\label{eq:logistic}
  \Zz+\Yz\rarrow{\alpha}\Zz+\Yz+\Zz,\ \Zz\rarrow{\alpha r}\phib
\end{equation}
described as
\begin{equation}
  \dot{z}(t)=\alpha z(t)(y(t)-r)
\end{equation}
where $\alpha>0$ is a positive parameter of the controller and $r$ is the desired set-point. The closed-loop network is given in this case by
\begin{equation}\label{eq:mainsystCL3}
  \begin{array}{rcl}
    \dot{x}(t)&=&Ax(t)-e_n  x_n (t) k_pz(t)+b_0\\
    \dot{z}(t)&=&\alpha z(t)(y(t)-r).
  \end{array}
\end{equation}

Similar calculations as in the antithetic case yield the following result:
\begin{proposition}
  The closed-loop network \eqref{eq:mainsystCL3} has the following equilibrium points:
  \begin{enumerate}[(a)]
    \item One positive equilibrium point defined as
    \begin{equation}
  x^*=-\bar{A}^{-1}(-e_nk_pr z^*+b_0) \textnormal{ and }z^*=\dfrac{g_0-r}{g_nrk_p},
\end{equation}
where $r<g_0$.
\item One zero-equilibrium point given by
\begin{equation}
  x^*=-A^{-1}b_0\textnormal{ and }z^*=0.
\end{equation}
  \end{enumerate}
\end{proposition}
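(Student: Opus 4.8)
The plan is to find all steady states of \eqref{eq:mainsystCL3} by setting both right-hand sides to zero and exploiting the multiplicative structure of the controller equation. Setting $\dot z=0$ gives $-\alpha z^*(\mu-e_n^Tx^*)=0$, and since $\alpha>0$ this factorizes into the two mutually exclusive branches $z^*=0$ and $e_n^Tx^*=x_n^*=\mu$. Each branch will be treated separately; within each, the surviving state equation together with the invertibility of $A$ (guaranteed by Hurwitz stability) pins down a unique equilibrium.

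First I would treat the branch $x_n^*=\mu$, which produces the positive equilibrium (a). Imposing $\dot x=0$ with $x_n^*=\mu$ gives $Ax^*-e_n\mu k_pz^*+b_0=0$, hence $x^*=-A^{-1}(-e_nk_p\mu z^*+b_0)$; equivalently, absorbing the coupling term into $\bar A=A-e_ne_n^Tk_pz^*$ via $x_n^*=e_n^Tx^*$, one may write $x^*=-\bar A^{-1}b_0$. To pin down $z^*$ I would left-multiply the state identity by $e_n^T$ and use the gains $g_0=-e_n^TA^{-1}b_0$ and $g_n=-e_n^TA^{-1}e_n$ introduced in Proposition \ref{prop:eqpt}, together with $e_n^Tx^*=\mu$. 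This yields the scalar relation $\mu=-g_nk_p\mu z^*+g_0$, whose solution is $z^*=(g_0-\mu)/(g_n\mu k_p)$. Since $A$ is Metzler and Hurwitz stable we have $g_n>0$, so positivity of $z^*$ is equivalent to $\mu<g_0$, which recovers the stated admissibility condition.

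Next I would treat the branch $z^*=0$, giving the zero-equilibrium (b). Here the coupling term $-e_nx_n^*k_pz^*$ vanishes identically, so $\dot x=0$ collapses to $Ax^*+b_0=0$ and hence $x^*=-A^{-1}b_0$. It is worth emphasizing that $\{z=0\}$ is an invariant set of the closed-loop dynamics, since $\dot z$ is proportional to $z$; this is precisely why a second equilibrium is present for every value of the set-point, in contrast with the antithetic controllers treated earlier.

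The computation is essentially routine and I do not anticipate a genuine obstacle: the only care points are verifying that the case split induced by the factorized controller equation is exhaustive, tracking the signs correctly when projecting the state equation onto $e_n$ to extract $z^*$, and observing that positivity of the first equilibrium hinges entirely on $g_n>0$ (hence on $A$ being Metzler and Hurwitz). Uniqueness within each branch follows directly from the invertibility of $A$.
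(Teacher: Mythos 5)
Your proof is correct and is exactly the ``similar calculations as in the antithetic case'' that the paper invokes without writing out: factor the controller equation into the branches $z^*=0$ and $x_n^*=\mu$, solve the state equation in each, and project onto $e_n^T$ to extract $z^*$ and the admissibility condition $\mu<g_0$ from $g_n>0$. You also correctly untangle the paper's formula for $x^*$, which as printed applies $\bar A^{-1}$ to a vector that already contains the feedback term: the consistent forms are $x^*=-A^{-1}(-e_nk_p\mu z^*+b_0)$ or, equivalently, $x^*=-\bar A^{-1}b_0$ with $\bar A=A-e_ne_n^Tk_pz^*$, as you state.
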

It is known from \cite{Briat:19:Logistic} that the zero-equilibrium point is structurally unstable. So, we just need to focus on proving the structural stability of the positive equilibrium point. This is stated in the following result:
\begin{theorem}
  Assume that $A$ is Metzler and Hurwitz stable. Then the unique positive equilibrium point of the system \eqref{eq:mainsystCL3} is locally exponentially stable for all $\alpha,k>0$ and $\mu<g_0$.
\end{theorem}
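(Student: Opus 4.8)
The plan is to linearize \eqref{eq:mainsystCL3} about the positive equilibrium and, exactly as in the proof of Theorem \ref{th:structstab1}, to recast the Hurwitz stability of the Jacobian as the internal stability of a negative feedback interconnection between a positive real integrator and a strictly positive real transfer function.

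First I would compute the Jacobian. Since the positive equilibrium satisfies $e_n^Tx^*=x_n^*=\mu$, the term $-\alpha z(\mu-e_n^Tx)$ in $\dot z$ linearizes to $\alpha z^*e_n^T\tilde x$ (the $\tilde z$-contribution vanishes because $\mu-x_n^*=0$), and the linearized dynamics is governed by
\begin{equation}
  J=\begin{bmatrix} \bar A & -e_n\mu k_p\\ \alpha z^*e_n^T & 0\end{bmatrix},\qquad \bar A:=A-e_ne_n^Tk_pz^*.
\end{equation}
The key observation is that the effective proportional gain $k_pz^*=(g_0-\mu)/(g_n\mu)$ is exactly the quantity $u_*$ from the linear analysis with $r=\mu$, and that $z^*>0$ because $\mu<g_0$ and $g_n>0$. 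Consequently $\bar A$ is Metzler, since subtracting the nonnegative quantity $k_pz^*$ from the $(n,n)$ entry leaves the off-diagonal entries of $A$ unchanged, and $\bar A$ is Hurwitz stable, since $\bar A\le A$ entrywise yields $\lambda_{\mathrm{PF}}(\bar A)\le\lambda_{\mathrm{PF}}(A)<0$ by Perron--Frobenius.

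Next I would eliminate the stable $x$-block by a Schur complement on $sI-J$, obtaining $\det(sI-J)=\det(sI-\bar A)\,\big(s+\alpha z^*\mu k_p\,H_n(s)\big)$ with $H_n(s):=e_n^T(sI-\bar A)^{-1}e_n$. Hence, apart from the stable poles of $\bar A$, the closed-loop poles are the roots of $1+\tfrac{K}{s}H_n(s)=0$ with $K:=\alpha z^*\mu k_p>0$, which is precisely the return difference of the negative feedback interconnection of the integrator $K/s$ with $H_n(s)$.

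To close the argument I would invoke the positive-real machinery: the integrator $K/s$ is positive real for every $K>0$ (its unique pole is simple, lies on the imaginary axis with positive residue $K$, and $\Re[K/(j\omega)]=0$), while Theorem \ref{th:Hn} gives that $H_n(s)$ is strictly positive real because $\bar A$ is Metzler and Hurwitz stable. Theorem \ref{th:interconnection} then guarantees that the interconnection is internally stable, so all $n+1$ roots of $\det(sI-J)$ lie in the open left half-plane; thus $J$ is Hurwitz and the positive equilibrium is locally exponentially stable for every $\alpha,k_p>0$ and every $\mu<g_0$. I expect the only real work to be the bookkeeping that reduces the $(n+1)$-dimensional Jacobian to the scalar characteristic equation and the verification that $\bar A$ stays Metzler and Hurwitz stable; once $H_n$ is identified with the transfer function of Theorem \ref{th:Hn}, the remainder is identical to the proof of Theorem \ref{th:structstab1}.
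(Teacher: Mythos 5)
Your proposal is correct and follows essentially the same route as the paper: linearize about the positive equilibrium, recognize the Jacobian as the negative feedback interconnection of the integrator $K/s$ with $K=\alpha(g_0-\mu)/g_n>0$ and the strictly positive real transfer function $H_n(s)=e_n^T(sI-\bar A)^{-1}e_n$, and conclude via the PR/SPR interconnection theorem. The paper's own proof is terser and leaves implicit the two points you spell out (that $\bar A$ remains Metzler and Hurwitz stable, and the Schur-complement reduction to the scalar return difference), so your write-up is a faithful, slightly more detailed version of the same argument.
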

\begin{proof}
  The linearized dynamics are governed by the matrix
  \begin{equation}
  \begin{bmatrix}
    \bar{A} & -e_nk_p\mu\\
    \dfrac{\alpha(g_0-\mu)}{g_n\mu k_p}e_n^T & 0
  \end{bmatrix}.
  \end{equation}
  This can be interpreted as the negative interconnection of the transfer function $H_n(s)$ and the integrator $\alpha(g_0-\mu)/(g_ns)$. By virtue of the previously obtained results, we know that this interconnection is asymptotically stable for all $\alpha,k>0$ and $\mu<g_0$. This proves the result.
\end{proof}

We have the following immediate extension to the case where the matrix $A$ is Metzler and output unstable:
\begin{theorem}
    Assume that $A$ is Metzler, output unstable, and non-singular. Then, the unique positive equilibrium point of the system \eqref{eq:mainsystCL3} is locally exponentially stable for all $\mu,\alpha,k>0$.
\end{theorem}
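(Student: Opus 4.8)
The plan is to mirror the proof of the preceding theorem (the stable exponential case) while importing the sign analysis developed for the output-unstable p-type AIC in Theorem~\ref{th:main:unstable}. The pivot of the whole argument is that linearizing \eqref{eq:mainsystCL3} about the positive equilibrium produces exactly the matrix
\begin{equation*}
  \begin{bmatrix}
    \bar{A} & -e_nk_p\mu\\
    \dfrac{\alpha(g_0-\mu)}{g_n\mu k_p}e_n^T & 0
  \end{bmatrix},\qquad \bar{A}:=A-e_ne_n^Tu_*,\quad u_*:=\dfrac{g_0-\mu}{g_n\mu},
\end{equation*}
so the entire task reduces to showing this matrix is Hurwitz stable for every $\mu,\alpha,k_p>0$. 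The steady-state output here plays the role of the set-point, so $u_*$ is precisely the quantity $(g_0-r)/(g_nr)$ of Proposition~\ref{prop:eqpt} with $r=\mu$.

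First I would settle existence and positivity of the equilibrium, which is the exponential analogue of Lemma~\ref{lem:positiveunstable}. By Lemma~\ref{lem:inversion} the output-unstable, nonsingular hypotheses give $g_n<0$ and $g_0\le0$; hence $g_0-\mu<0$ and $g_n\mu k_p<0$, so $z^*=(g_0-\mu)/(g_n\mu k_p)>0$ automatically for every $\mu>0$. Notably this needs no restriction on $\mu$ beyond positivity, in contrast to the stable case which required $\mu<g_0$.

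Next I would establish that $\bar{A}$ is Metzler and Hurwitz stable. The Metzler property is immediate since the rank-one correction $-e_ne_n^Tu_*$ only decreases the $(n,n)$ diagonal entry of the already-Metzler $A$ and leaves all off-diagonal entries nonnegative. For Hurwitz stability I would invoke Lemma~\ref{lem:HSuns} with $r=\mu$, which gives that $\bar{A}$ is Hurwitz stable precisely when $g_0<0$. With $\bar{A}$ Metzler and Hurwitz stable, the conclusion follows exactly as in the stable exponential theorem: Theorem~\ref{th:Hn} yields that $H_n(s)=e_n^T(sI-\bar A)^{-1}e_n$ is strictly positive real, while the integrator $\alpha(g_0-\mu)/(g_ns)$ is positive real because its coefficient $\alpha(g_0-\mu)/g_n$ is a quotient of two negative quantities and is therefore strictly positive. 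Reading the linearized matrix as the negative feedback interconnection of these two transfer functions, Theorem~\ref{th:interconnection} certifies internal stability, hence local exponential stability, for all $\alpha,k_p>0$.

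The main obstacle I anticipate is not the interconnection argument but the sign bookkeeping in the output-unstable regime: relative to the stable case both $g_0$ and $g_n$ change sign, and the crux is verifying that they flip \emph{together}, so that the integrator gain $\alpha(g_0-\mu)/g_n$ stays positive and $\bar{A}$ remains Hurwitz. A second delicate point is the boundary $g_0=0$, where Lemma~\ref{lem:HSuns} no longer yields strict stability of $\bar{A}$ (an eigenvalue sits at the origin) and the claim degenerates; since Lemma~\ref{lem:inversion} only gives $g_0\le0$, I would state $g_0<0$ as the operative condition, noting that by $g_0=-e_n^TA^{-1}b_0$ with $e_n^TA^{-1}\ge0$ this is the generic situation for any nontrivial basal input $b_0$, and flag $g_0=0$ as the sole degenerate case excluded. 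This is the only place where the argument must be supplemented beyond a direct transcription of the stable-case proof.
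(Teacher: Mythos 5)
Your proof is correct and is exactly the argument the paper intends: the theorem is stated there without proof as an ``immediate extension,'' and your combination of Lemma~\ref{lem:inversion} (giving $z^*>0$ for every $\mu>0$), Lemma~\ref{lem:HSuns} (giving $\bar A$ Metzler and Hurwitz), and the SPR/positive-real interconnection argument of the stable exponential case supplies precisely the omitted details. Your flagging of the boundary case $g_0=0$ is a valid refinement rather than a defect: the paper's Theorem~\ref{th:main:unstable} explicitly assumes $g_0<0$ while the exponential-case statement does not, so stating $g_0<0$ as the operative condition actually repairs a small imprecision in the theorem as written.
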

\begin{proof}
  The proof follows from the same arguments as for the antithetic integral controller.
\end{proof}

\subsection{Logistic integral controllers}\label{sec:logistic}

We consider now the logistic integral controller \cite{Briat:19:Logistic}
\begin{equation}
  \dot{z}(t)=\dfrac{\alpha}{\beta} z(t)(\beta - z(t))(y(t)-r)
\end{equation}
where $\alpha>0$ is a parameter of the controller, $\beta>0$ is the saturation level, and $r$ is the desired set-point. The closed-loop network is given in this case by
\begin{equation}\label{eq:mainsystCL4}
  \begin{array}{rcl}
    \dot{x}(t)&=&Ax(t)-e_n  k_px_n(t)z(t)+b_0\\
    \dot{z}(t)&=&\dfrac{\alpha}{\beta} z(t)(\beta - z(t))(y(t)-r).
  \end{array}
\end{equation}

We have the following result:
\begin{proposition}
  The closed-loop network \eqref{eq:mainsystCL4} has the following equilibrium points:
\begin{itemize}
  \item One positive interior equilibrium point given by
\begin{equation}
  (x^*,z^*)=\left(-\bar{A}^{-1}(-e_n r  z^*+b_0),\dfrac{g_0-r}{g_n r}\right),
\end{equation}
where $r\in(g_0/(1+\beta k_pg_n),g_0)$.
\item One zero-equilibrium point given by
\begin{equation}
  (x^*,z^*)=(-A^{-1}b_0,0).
\end{equation}
\item One positive saturating equilibrium point given by
\begin{equation}
  (x^*,z^*)=-\left((A-e_ne_n^Tk_p\beta)^{-1}b_0,\beta\right).
\end{equation}
\end{itemize}
\end{proposition}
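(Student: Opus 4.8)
The plan is to compute the equilibria of \eqref{eq:mainsystCL4} directly by setting both time derivatives to zero and exploiting the factored form of the $\dot z$ equation. Since $k/\beta>0$, the condition $\dot z=0$ reads $z(\beta-z)(r-e_n^Tx)=0$, which splits into exactly three mutually exclusive possibilities: $z=0$, $z=\beta$, or $e_n^Tx=x_n=r$. I would treat each branch separately and, in each case, substitute the corresponding value of $z$ (or the constraint on $x_n$) into the equilibrium equation $Ax-e_nx_nz+b_0=0$ coming from $\dot x=0$. Throughout I work under the standing assumptions of the section, in particular that $A$ is Metzler and Hurwitz stable, so that $-A^{-1}\ge0$ and $g_0,g_n>0$.

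For the branch $z=0$, the $x$-equation collapses to $Ax+b_0=0$, so $x^*=-A^{-1}b_0$, giving the zero-equilibrium point (named for $z^*=0$, with $x^*\ge0$ since $-A^{-1}\ge0$). For the branch $z=\beta$, using $x_nz=e_n^Tx\,\beta$ turns the $x$-equation into $(A-\beta e_ne_n^T)x+b_0=0$, so $x^*=-(A-\beta e_ne_n^T)^{-1}b_0$, the saturating equilibrium point. Here I would observe that $A-\beta e_ne_n^T$ only lowers the $(n,n)$ diagonal entry of the Metzler matrix $A$, so it remains Metzler and Hurwitz stable, hence invertible with nonnegative-signed inverse, making the expression well defined and the state nonnegative.

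For the interior branch $x_n=r$, I would first use $x_n^*=r$ to rewrite $\dot x=0$ as $Ax^*-e_nr z^*+b_0=0$, whence $x^*=-A^{-1}(-e_nr z^*+b_0)$; equivalently, substituting $e_nrz^*=e_ne_n^Tx^*z^*$ gives $\bar Ax^*+b_0=0$ with $\bar A=A-e_ne_n^Tz^*$, which is the form reported in the statement. Left-multiplying the first form by $e_n^T$ and using $e_n^Tx^*=r$ together with $g_0=-e_n^TA^{-1}b_0$ and $g_n=-e_n^TA^{-1}e_n$ produces the scalar relation $r=-g_nrz^*+g_0$, which solves uniquely for $z^*=(g_0-r)/(g_nr)$ and thus determines the positive equilibrium point.

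The only genuine content is then to pin down the admissible range of $r$. Requiring this interior equilibrium to be a bona fide positive equilibrium distinct from the other two, i.e. $z^*\in(0,\beta)$, forces two inequalities: $z^*>0$ gives $r<g_0$ (since $g_n,r>0$), while $z^*<\beta$ gives $g_0-r<\beta g_nr$, i.e. $r>g_0/(1+\beta g_n)$. Together these yield the stated interval $r\in(g_0/(1+\beta g_n),g_0)$. I expect the main (though mild) obstacle to be the sign bookkeeping that certifies each $x^*\ge0$ and keeps the three branches genuinely distinct, rather than any structural difficulty: the algebra in each branch is a single linear solve, entirely parallel to the antithetic computation in Proposition \ref{prop:eqpt}.
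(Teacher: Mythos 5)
Your proof is correct and uses exactly the same direct computation that the paper employs for the analogous antithetic and exponential cases; the paper in fact states this proposition without any proof, so your three-way case split on $z(\beta-z)(r-e_n^Tx)=0$ followed by a linear solve in each branch is precisely the intended argument. Two minor remarks: the formula $-\bar A^{-1}(-e_nrz^*+b_0)$ in the statement double-counts the feedback term (it should read either $-A^{-1}(-e_nrz^*+b_0)$ or $-\bar A^{-1}b_0$ with $\bar A=A-e_ne_n^Tz^*$), and you correctly recover both consistent forms; and for the interior branch the nonnegativity of $x^*$ is cleanest from $x^*=-\bar A^{-1}b_0$, since $\bar A\le A$ entrywise is again Metzler and Hurwitz when $z^*>0$, by the same Perron--Frobenius comparison you invoke for the saturating branch.
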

It is known from \cite{Briat:19:Logistic} that the zero and saturating equilibrium points are structurally unstable. Therefore, we just need to focus on proving the structural stability of the positive equilibrium. This is stated in the following result:
\begin{theorem}
  Assume that $A$ is Metzler and Hurwitz stable and that $\beta>0$. Then the unique positive equilibrium point of the system \eqref{eq:mainsystCL4} is locally exponentially stable for all $\alpha>0$ and all $$ r \in\left(\dfrac{g_0}{1+k_p\beta g_n},g_0\right).$$
\end{theorem}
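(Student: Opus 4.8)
The plan is to linearize the closed-loop system \eqref{eq:mainsystCL4} about the positive equilibrium and to recognize the resulting Jacobian as exactly the same negative feedback interconnection already analyzed in Theorem \ref{th:structstab1} and in the exponential case. Writing $\tilde x=x-x^*$ and $\tilde z=z-z^*$, the decisive observation is that the factor $(r-e_n^Tx)$ appearing in the $z$-dynamics vanishes at equilibrium, since $x_n^*=r$; consequently the derivative of the logistic factor $z(\beta-z)$ contributes nothing and the $(2,2)$ entry of the Jacobian is exactly zero, giving the pure integral structure. The $x$-block linearizes to $\bar A=A-u_*e_ne_n^T$ with $u_*=z^*=(g_0-r)/(g_nr)$, the cross term produced by $\tilde z$ is $-e_nr$, and the $z$-row becomes $c\,e_n^T$ with $c:=\tfrac{k}{\beta}z^*(\beta-z^*)$. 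The Jacobian is therefore
\begin{equation}
\begin{bmatrix}
\bar A & -e_nr\\
c\,e_n^T & 0
\end{bmatrix}.
\end{equation}

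Next I would verify the two sign conditions that make this structure amenable to the positive-real argument. Because $r<g_0$ we have $u_*>0$, so $\bar A\le A$ entrywise (only the $(n,n)$ diagonal entry is decreased); since $A$ is Metzler, $\bar A$ remains Metzler, and by the Perron--Frobenius monotonicity used in Lemma \ref{lemma:zeros} one gets $\lambda_{\mathrm{PF}}(\bar A)\le\lambda_{\mathrm{PF}}(A)<0$, so that $\bar A$ is Metzler and Hurwitz stable. The set-point window $r\in(g_0/(1+\beta g_n),g_0)$ is precisely what forces $z^*\in(0,\beta)$, and hence $c>0$. This equivalence is the crucial point of the proof: the logistic nonlinearity is innocuous here exactly because at an interior equilibrium $z^*\in(0,\beta)$ the effective integral gain $c$ is strictly positive; at the boundary values $z^*=0$ or $z^*=\beta$ it would vanish, consistent with the zero and saturating equilibria being structurally unstable.

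With these facts established, the Jacobian is recognized as the negative feedback interconnection of the integrator $rc/s$ with the transfer function $H_n(s)=e_n^T(sI-\bar A)^{-1}e_n$. The integrator is positive real for every $k>0$ since $rc>0$, and by Theorem \ref{th:Hn} the transfer function $H_n(s)$ is strictly positive real because $\bar A$ is Metzler and Hurwitz stable. Applying Theorem \ref{th:interconnection} to the negative feedback interconnection of an SPR transfer function and a PR integrator then shows that the interconnection is internally stable, i.e. the Jacobian above is Hurwitz stable. Local exponential stability of the positive equilibrium for all $k,\beta>0$ and all admissible $r$ follows immediately.

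The main obstacle is not any single computation but the bookkeeping that ties the admissible set-point window to the positivity of the effective gain $c$; once it is confirmed that $r\in(g_0/(1+\beta g_n),g_0)$ is equivalent to $z^*\in(0,\beta)$, the stability argument is a verbatim repetition of the positive-real interconnection reasoning already in place. I would also note that the statement writes the gain as $\alpha$ whereas the controller is parametrized by $k$; this is a harmless relabeling, and the conclusion holds for every value of that single positive gain.
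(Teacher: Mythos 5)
Your proof is correct and follows essentially the same route as the paper: linearize about the positive equilibrium, observe that the $(2,2)$ Jacobian entry vanishes because $r-e_n^Tx^*=0$, and treat the result as the negative feedback interconnection of the SPR transfer function $H_n(s)=e_n^T(sI-\bar A)^{-1}e_n$ with a positive-gain integrator, invoking Theorem \ref{th:Hn} and Theorem \ref{th:interconnection}. You in fact supply details the paper omits -- the explicit equivalence between $r\in\left(g_0/(1+\beta g_n),g_0\right)$ and $z^*\in(0,\beta)$ (hence $c>0$), and the verification that $\bar A$ stays Metzler and Hurwitz -- so no gaps remain.
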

\begin{proof}
  The linearized dynamics are governed by the matrix
  \begin{equation}
  \begin{bmatrix}
    \bar{A} & -e_nk_p r \\
    \dfrac{\alpha}{\beta}z^*(\beta-z^*)e_n^T & 0
  \end{bmatrix}.
  \end{equation}
  This can be interpreted as the negative interconnection of the transfer function $H_n(s)$ and the integrator $k\beta z^*(\beta-z^*)/(\beta s)$. Since the gain of the integrator is always positive, then the transfer function is positive real and the previously obtained apply.
\end{proof}

We have the following immediate extension to the case where the matrix $A$ is Metzler and output unstable:
\begin{theorem}
  Assume that $A$ is Metzler, output unstable, and non-singular. Then, the unique positive equilibrium point of the system \eqref{eq:mainsystCL4} is locally exponentially stable for all $k>0$ and all $ r > g_0/(1+\beta k_pg_n)$.
\end{theorem}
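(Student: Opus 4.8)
The plan is to treat this as the exact output-unstable analogue of the stable-case logistic theorem, reducing it to the positive-real interconnection machinery already assembled in Section~\ref{sec:ptype:lin}. The key observation is that the equilibrium value $z^*=(g_0-r)/(g_nr)$ of the logistic loop coincides algebraically with the quantity $u_*$ appearing in the p-type AIC analysis, so that the matrix $\bar A=A-e_ne_n^Tz^*$ and the transfer function $H_n(s)=e_n^T(sI-\bar A)^{-1}e_n$ arising in the linearization are \emph{literally the same objects} studied earlier. Consequently, once I show that $\bar A$ is Metzler and Hurwitz stable, Theorem~\ref{th:Hn} immediately gives that $H_n$ is strictly positive real, and the stability of the closed loop follows by the same interconnection argument as in the preceding (stable) logistic theorem and in Theorem~\ref{th:main:unstable}.

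The step requiring genuine care is locating the positive equilibrium inside the saturation band, i.e. verifying $z^*\in(0,\beta)$, since only then does the integrator gain $\tfrac{k}{\beta}z^*(\beta-z^*)\,r$ stay strictly positive. Positivity $z^*>0$ is automatic for every $r>0$: by Lemma~\ref{lem:inversion} we have $g_0<0$ and $g_n<0$, so $(g_0-r)/(g_nr)>0$ exactly as in Lemma~\ref{lem:positiveunstable}. The upper bound $z^*<\beta$ is where the hypothesis $r>g_0/(1+\beta g_n)$ is used: clearing the negative denominator $g_nr<0$ in $z^*<\beta$ reverses the inequality to give $g_0>r(1+\beta g_n)$, and dividing through by $1+\beta g_n$ reproduces the stated threshold. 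I would flag explicitly that the direction of this last division is governed by the sign of $1+\beta g_n$, and that the stated bound $g_0/(1+\beta g_n)$ is a positive number precisely when $1+\beta g_n<0$; this is the regime in which the constraint is nonvacuous and $z^*(\beta-z^*)>0$ holds, so that the integrator gain is strictly positive.

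With the equilibrium located, the remaining ingredients are routine applications of what is already proved. Since $z^*\ge0$ perturbs only the $(n,n)$ diagonal entry of the Metzler matrix $A$, the matrix $\bar A$ is again Metzler, and its Hurwitz stability follows from Lemma~\ref{lem:HSuns} under $g_0<0$ (the strict version of the bound $g_0\le0$ supplied by Lemma~\ref{lem:inversion}). Then $H_n(s)$ is strictly positive real by Theorem~\ref{th:Hn}, while the integrator with positive gain is positive real, so Theorem~\ref{th:interconnection} yields internal stability of their negative feedback interconnection; hence the linearized matrix is Hurwitz and the positive equilibrium is locally exponentially stable for all $k>0$ and all admissible $r$. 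The only real obstacle beyond bookkeeping is thus the sign analysis of the second paragraph: one must confirm that the prescribed range of $r$ forces $z^*$ strictly into $(0,\beta)$ so that the integrator is PR rather than degenerate, and that $g_0<0$ strictly so that $\bar A$ is Hurwitz; everything else is an immediate transcription of the stable-case and output-unstable arguments.
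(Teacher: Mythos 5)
Your argument is correct and is precisely the ``immediate extension'' that the paper asserts but never writes out: locate the positive equilibrium, verify $z^*=(g_0-r)/(g_nr)=u_*\in(0,\beta)$ so the linearized integrator gain $\tfrac{k}{\beta}z^*(\beta-z^*)r$ is strictly positive, get Hurwitz stability of $\bar A=A-e_ne_n^Tz^*$ from Lemma~\ref{lem:HSuns}, and close with Theorem~\ref{th:Hn} and Theorem~\ref{th:interconnection} exactly as in the stable-case logistic proof and Theorem~\ref{th:main:unstable}. Your sign analysis in the second paragraph is the genuinely valuable part, and it exposes two hypotheses that the theorem statement leaves implicit: one needs $g_0<0$ strictly (Lemma~\ref{lem:inversion} only gives $g_0\le 0$, and if $g_0=0$ the Schur complement in Lemma~\ref{lem:HSuns} vanishes and $\bar A$ is singular), and one needs $1+\beta g_n<0$, since otherwise $z^*\ge\beta$ for every $r>0$, the equilibrium leaves the saturation band, the linearized feedback changes sign, and the claimed range $r>g_0/(1+\beta g_n)$ no longer characterizes $z^*<\beta$. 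With those two conditions made explicit your proof is complete; without them the theorem as stated is not quite correct, so the caveats you flag should be promoted to assumptions rather than remarks.
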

\begin{proof}
  The proof follows from the same argument as for the exponential and the antithetic controllers.
\end{proof}}

\end{document}